\newcounter{mnotecount}[section]
\theoremstyle{plain}
\newtheorem{theorem}{Theorem}
\numberwithin{theorem}{section}
\newtheorem{corollary}{Corollary}
\numberwithin{corollary}{section}
\newtheorem{lemma}{Lemma}
\numberwithin{lemma}{section}
\newtheorem{proposition}{Proposition}
\numberwithin{proposition}{section}
\newtheorem{assumption}{Assumption}
\theoremstyle{definition}
\newtheorem{definition}{Definition}
\numberwithin{definition}{section}
\newtheorem{remark}{Remark}
\numberwithin{remark}{section}
\numberwithin{equation}{section}
\title{The Pauli-Poisson equation and its semiclassical limit}
\author[a]{Jakob Möller}
\affil[a]{Research Platform MMM "Mathematics-Magnetism-Materials" c/o Fak. Mathematik, Univ. Wien, Oskar-Morgenstern-Platz 1, 1090 Vienna, Austria}
\begin{document}

\maketitle


\begin{abstract}
The Pauli-Poisson equation is a semi-relativistic model for charged spin-$1/2$-par-ticles in a strong external magnetic field and a
self-consistent electric potential computed from the Poisson equation in 3
space dimensions. It is a system of two magnetic Schr\"odinger equations
for the two components of the Pauli 2-spinor, representing the two spin states of a fermion, coupled by the additional Stern-Gerlach
term representing the interaction of magnetic field and spin.

We study the global wellposedness in the energy space and the semiclassical  limit of the Pauli-Poisson to the magnetic Vlasov-Poisson equation with Lorentz force and the semiclassical limit of the linear Pauli equation to the magnetic Vlasov equation with Lorentz force.

We use Wigner transforms and a density matrix formulation for mixed states, extending the
work of P. L. Lions \& T. Paul as well as P. Markowich \& N.J. Mauser on the semiclassical limit of the non-relativistic 
Schr\"odinger-Poisson equation.
\end{abstract}

\section{Introduction}

The \emph{Pauli-Poisson equation}, introduced in \cite{moller2023models} similar to the magnetic Schrödinger-Poisson equation (e.g. Barbaroux and Vougalter \cite{barbaroux2016existence, barbaroux2017well}), for a 2-spinor $u^{\hbar,c} = (u_1^{\hbar,c},u_2^{\hbar,c})^T \in (L^2(\mathbb{R}_x^3\times \mathbb{R}_t,\mathbb{C}))^2$ and the self-consistent electric potential $V^{\hbar,c}$ (with given magnetic potential $A$) – which depend on the two parameters  $\hbar$ (Planck constant) and $c$ (speed of light) – is given by the following nonlinear system of 1+1 coupled PDE 
\begin{align} 
    i\hbar\partial_t u^{\hbar,c} &= -\frac{1}{2m}(\hbar \nabla-i\frac{q}{c}A)^2u^{\hbar,c} + qV^{\hbar,c} u^{\hbar,c} - \frac{\hbar q}{2cm} (\sigma \cdot B) u^{\hbar,c},\label{eq:PP_Pauli_unscaled}\\
    -\Delta V^{\hbar,c} &= \rho^{\hbar,c} := |u^{\hbar,c}|^2.\label{eq:PP_Poisson_unscaled}
\end{align}
with initial data
\begin{equation}
    u^{\hbar}(x,0) = u^{\hbar}_I(x)\in (L^2(\mathbb{R}^3))^2.
    \label{eq:PP_data_unscaled}
\end{equation}
and \emph{Pauli current density} defined by
\begin{equation}
 J^{\hbar,c} = \Im(\overline{u^{\hbar,c}}(\hbar \nabla -i \frac{q}{c} A)u^{\hbar,c}) - \hbar \nabla \times (\overline{u^{\hbar,c}}\sigma u^{\hbar,c}).
    \label{eq:Pauli_current}
\end{equation}
Here, $m$ is the mass and $q$ is the charge. 
$A\colon \mathbb{R}^3\rightarrow \mathbb{R}^3$ is the external magnetic vector potential with the corresponding external magnetic field $B = \nabla \times A$, independent of $\hbar$.  

The scalar charge density is defined by  $\rho^{\hbar,c} := |u^{\hbar,c}|^2:= u^{\hbar,c} \overline{u^{\hbar,c}} = |u^{\hbar,c}_1|^2+|u^{\hbar,c}_2|^2$. More generally we write $v\overline{w}$ for the scalar product of two 2-spinors $v=(v_1,v_2)^T$, $w=(w_1,w_2)^T \in \mathbb{C}^2$, i.e.
\begin{equation*}
    {v}\overline{w} := \langle v,w\rangle_{\mathbb{C}^2} = v_1w^*_1 +v_2w^*_2.
\end{equation*}
where $*$ denotes the complex conjugate. The coupling of spin to the magnetic field is given by the Stern-Gerlach term $\sigma \cdot B := \sum_{k=1}^3 \sigma_k B_k$ where the $\sigma_k$ are the Pauli matrices
\begin{align}
    \sigma_1 = \begin{pmatrix}
    0 & 1 \\ 1 & 0
    \end{pmatrix}, && 
     \sigma_2 = \begin{pmatrix}
    0 & -i \\ i & 0
    \end{pmatrix}, &&
     \sigma_3 = \begin{pmatrix}
    1 & 0 \\ 0 & -1
    \end{pmatrix}.
\end{align} 
The expressions $\overline{u^{\hbar,c}}\nabla u^{\hbar,c}$ and $\overline{u^{\hbar,c}}\sigma u^{\hbar,c}$ are to be understood as the 3-vectors with components
\begin{align*}
    (\overline{u^{\hbar,c}_1}, \overline{u^{\hbar,c}_2}) \partial_k \begin{pmatrix} u^{\hbar,c}_1 \\ u^{\hbar,c}_2 \end{pmatrix}, && (\overline{u^{\hbar,c}_1}, \overline{u^{\hbar,c}_2}) \sigma_k \begin{pmatrix} u^{\hbar,c}_1 \\ u^{\hbar,c}_2 \end{pmatrix},
\end{align*}
for $k=1,2,3$. Note that using the Pauli vector identity $(a\cdot \sigma)(b\cdot \sigma) = (a\cdot b)I + i(a \times b) \cdot \sigma$ the Pauli Hamiltonian can be rewritten as 
\begin{equation}
    H = -\frac{1}{2m}(\sigma \cdot (\hbar \nabla-i\frac{q}{c}A))^2 + qV^{\hbar,c}.
\end{equation}
We also use the following notation for the "free" Pauli Hamiltonian
\begin{equation}
    H_0 = -\frac{1}{2m}((\sigma \cdot (\hbar \nabla -i\frac{q}{c}A))^2 .
    \label{eq:P_Hamiltonian}
\end{equation}
The current density can be written as
\begin{equation}
  J^{\hbar,c} =  \Re \left( \overline{u^{\hbar,c}}
  {\sigma}({\sigma} \cdot (- i \hbar \nabla - \frac{q}{c}
  A)) u^{\hbar,c} \right),
  \label{eq:J compact}
\end{equation}
which is to be understood as the 3-vector with components
\begin{equation*}
    J_k^{\hbar,c} = \Re \left( \overline{u^{\hbar,c}}
  \sigma_k({\sigma} \cdot (- i \hbar \nabla - \frac{q}{c}
  A)) u^{\hbar,c} \right) .
\end{equation*}
The key semi-relativistic features of the Pauli equation are the magnetic Laplacian
\begin{equation}
    \frac{1}{2m}(\hbar\nabla-i\frac{q}{c}A)^2,
    \label{eq:magnetic_laplacian}
\end{equation}
and the Stern-Gerlach term  
\begin{equation}
    \frac{\hbar q}{2mc}(\sigma\cdot B)u^{\hbar,c},
    \label{eq:stern_gerlach_term}
\end{equation}
coupling spin and magnetic field $B$. There are two small parameters in \eqref{eq:PP_Pauli_unscaled}-\eqref{eq:PP_Poisson_unscaled}:
\begin{enumerate}[label=(\alph*)]
\item $1/c$ which corresponds to the non-relativistic (or "post-Newtonian"  limit) where the speed of light $c$ is infinite  (i.e. $c \to \infty$) and
\item The Planck constant $\hbar$ which we study in the semiclassical limit $\hbar \to 0$. 
\end{enumerate}
In this asymptotic setting, the Pauli equation  can be seen as either
\begin{itemize}
\item the $O(1/c)$ approximation of the fully relativistic Dirac equation for a 4-spinor describing the two spin states of the particle and its antiparticle, or
\item the $O(1/c)$ correction extending the scalar Schrödinger equation which does not include magnetism nor spin, both being relativistic effects.
\end{itemize}
The two components
of the Pauli equation describe the two spin states of a charged fermion travelling at high speed in the intermediate regime compared to the speed of light, whereas the Poisson equation for the electric potential $V^{\hbar}$
describes the self-interaction with the electric field. The 2-spinor corresponds to the "electron component" of the Dirac equation for a 4-spinor obtained by a Foldy - Wouthuysen transform and projection on the "upper" ("large") component of the 4 spinor (the "lower" ("small") component corresponds to the positron, cf. \cite{bechouche1998semi}). The curl term in \eqref{eq:Pauli_current} arises from the $O(1/c)$ approximation of the Dirac current, cf. \cite{nowakowski1999quantum}. Following \cite{barbaroux2016existence, barbaroux2017well,luhrmann2012mean, michelangeli2015global}, we consider a situation where  the magnetic potential $A$ is externally given and therefore we do not use a superscript $\hbar$ for $A$ and $B$. 

From the point of view of physics this setting is appropriate when the external magnetic field is much stronger than the self-consistent magnetic field. The Pauli-Poisson equation is not consistent in the parameter $1/c$ and thus not a fully self-consistent model. The fully self-consistent semi-relativistic model is given by the Pauli-Poisswell equation defined in \eqref{eq:PPW_Pauli}-\eqref{eq:PPW_current}. 

The Pauli-Poisson equation also arises as the mean field limit where $N\rightarrow \infty$ of the linear $N$-body Pauli equation with Coulomb interaction, introduced in \cite{moller2023models}. 

Since we are interested in the semiclassical  limit  $\hbar \rightarrow 0$ with $c$ fixed we use a scaling of \eqref{eq:PP_Pauli_unscaled}-\eqref{eq:PP_Poisson_unscaled} where the small parameter is a scaled Planck constant, still denoted by $\hbar$. We will also omit the $c$-superscript and write $u^{\hbar}$ instead in order to emphasize the dependence on $\hbar$.

The Pauli-Poisson equation can be generalized to the \emph{Pauli-Hartree equation} (PH)
\begin{align} 
    i\hbar\partial_t \Psi^{\hbar,c} &= -\frac{1}{2}(\hbar \nabla-iA)^2\Psi^{\hbar,c} + V^{\text{ext}} \Psi^{\hbar,c} - \frac{\hbar}{2} (\sigma \cdot B) \Psi^{\hbar,c} + (W \ast|\Psi^{\hbar,c}|^2)\Psi^{\hbar,c},\label{eq:pauli hartree}
\end{align}
We consider the $3d$ case which is physically relevant whenever magnetic fields are considered. In this case only the Pauli-Poisson equation corresponds to the PH equation with
\begin{equation}
    W(x) = \frac{1}{|x|},
\end{equation}
If we neglect spin , i.e. omit the Stern-Gerlach term in the Pauli-Poisson equation and replace the 2-spinor $u^{\hbar}$ by a scalar wave function $\psi^{\hbar}\in L^2(\mathbb{R}^3)$ we obtain the \emph{magnetic Schrödinger-Poisson equation} (m-SP), considered in \cite{barbaroux2016existence, barbaroux2017well}. It is a simplified model compared to the Pauli-Poisson equation since it does not include the description of spin. The m-SP equation is given by
\begin{align}
    i\hbar\partial_t \psi^{\hbar} &= -\frac{1}{2}(\hbar \nabla-iA)^2\psi^{\hbar} + V^{\hbar} \psi^{\hbar}, \label{eq:MSP_Schrödinger}\\
    -\Delta V^{\hbar} &= \rho^{\hbar} :=  |\psi^{\hbar}|^2, \label{eq:MSP_PoissonV}
\end{align}
with initial data
\begin{equation}
    \psi^{\hbar}(x,0) = \psi^{\hbar}_I(x) \in L^2(\mathbb{R}^3).
    \label{eq:MSP_data}
\end{equation}
and \emph{magnetic Schrödinger current density}, given by
\begin{equation}
    J^{\hbar} = \Im (\overline{\psi^{\hbar}}(\hbar \nabla -iA) \psi^{\hbar}).
\end{equation}
More generally we may consider the \emph{magnetic Schrödinger-Hartree equation} (m-SH), studied in \cite{luhrmann2012mean, michelangeli2015global}:
\begin{align} 
    i\hbar\partial_t \psi^{\hbar} &= -\frac{1}{2}(\hbar \nabla-i A)^2\psi^{\hbar} + V^{\text{ext}} \psi^{\hbar} + (W \ast|\psi^{\hbar}|^2)\psi^{\hbar},\label{eq:magnetic schrödinger hartree}
\end{align}
where $V^{\text{ext}}$ is an external potential and $W$ is an interaction kernel depending on $x\in \mathbb{R}^3$. In $3d$ the m-SP equation and m-SH equation with $W(x) \simeq |x|^{-1}$ are equivalent.

Global wellposedness of the m-SP equation has been studied in \cite{barbaroux2017well} for bounded potentials and of the m-SH equation for $A\in L^2_{\text{loc}}$ in \cite{michelangeli2015global}. The mean field limit of the bosonic $N$-body magnetic Schrödinger equation with Coulomb interaction to the m-SH equation \eqref{eq:magnetic schrödinger hartree} was shown by Lührmann in \cite{luhrmann2012mean}.

The non-relativistic limit (i.e. $c \rightarrow \infty$) of the Pauli-Poisson equation \eqref{eq:PP_Pauli_unscaled}-\eqref{eq:PP_data_unscaled} and the m-SP equation \eqref{eq:MSP_Schrödinger}-\eqref{eq:MSP_data} is the \emph{Schrödinger-Poisson equation} (SP) given by
\begin{align}
    i\hbar\partial_t \psi^{\hbar} &= -\frac{\hbar^2}{2} \Delta \psi^{\hbar} + V^{\hbar}\psi^{\hbar}, \label{eq:SP_Schrödinger}\\
    -\Delta V^{\hbar} &= \rho^{\hbar} := |\psi^{\hbar}|^2,
    \label{eq:SP_Poisson}
\end{align}
with initial data
\begin{equation}
    \psi^{\hbar}(x,0) = \psi^{\hbar}_I(x) \in L^2(\mathbb{R}^3),
\end{equation}
and \emph{Schrödinger current density}, given by
\begin{equation}
    J^{\hbar} = \hbar \Im(\overline{\psi ^{\hbar}} \nabla \psi^{\hbar}). \label{eq:SP_current}
\end{equation}
Compare \eqref{eq:SP_current} to \eqref{eq:Pauli_current} where we have additional terms due to the presence of magnetic fields and spin. 

The SP equation is a fully self-consistent $O(1)$ (i.e. non-relativistic) model which arises as the non-relativistic limit of the fully self-consistent \emph{Dirac-Maxwell equation} (DM) of relativistic quantum mechanics, describing the self-interaction  of a charged fermion and its antiparticle with the electromagnetic field, cf. \cite{masmoudi2003nonrelativistic}.

The semiclassical limit of \eqref{eq:SP_Schrödinger}-\eqref{eq:SP_current} to the \emph{Vlasov-Poisson equation} (VP) using Wigner transforms was shown by P.L. Lions \& T. Paul in \cite{lions1993mesures} and by P. Markowich \& N.J. Mauser in \cite{markowich1993classical}  (in $d=1$ the pure state case was dealt with by Zhang, Zheng and Mauser in \cite{zhang2002limit} for appropriate non-unique measure valued weak solutions of the VP equation). Global wellposedness in $H^2(\mathbb{R}^3)$ of \eqref{eq:SP_Schrödinger}-\eqref{eq:SP_Poisson} for mixed states was established in \cite{brezzi1991three, illner1994global} and in $L^2(\mathbb{R}^3)$ in \cite{castella1997l2}. This fits well with the semiclassical limit of the SP equation to the VP equation in $3d$ which works only for particular mixed states satisfying condition \eqref{eq:weight_conditionC} below.

The analysis for the Pauli-Poisson equation is considerably harder than for the Schrö-dinger-Poisson equation. Already for the linear Pauli equation (i.e. where both $V$ and $A$ are given "external" potentials) the analysis is much more complicated than for the (magnetic) Schrödinger equation because of the existence of zero modes due to the presence of the Stern-Gerlach term involving the magnetic field $B=\nabla \times A$, see e.g. \cite{erdHos1997semiclassical}, where even the case of constant magnetic fields shows hard technical challenges. At the core of the technical difficulties, besides the Stern-Gerlach term, is the "advective" term $A\cdot \nabla$ in the magnetic Schrödinger operator.

For the linear Pauli equation with external $V$ and $A$ we include a generalization of Theorem IV.1 from \cite{lions1993mesures}, which states that under the assumption $V\in C^{1,1}$ the semiclassical limit is unique and the Hamiltonian flow of the Vlasov equation is well-defined. For the magnetic Vlasov equation we have a similar result but naturally we have to include additional assumptions on the magnetic potential $A$, stated in Theorem \ref{thm:main} \ref{prop:C1}.

In the fully self-consistent semi-relativistic $O(1/c)$ approximation of the DM equation a magnetostatic, i.e $O(1/c)$ approximation of  Maxwell's  equations is used to self-consistently describe the magnetic field. The magnetic potential $A^{\hbar,c}$ is coupled to $u^{\hbar,c}$ via three Poisson type equations with the Pauli current density as source term. This is the \emph{Pauli-Poisswell equation}, derived in \cite{masmoudi2001selfconsistent}:
\begin{align}
    i\hbar\partial_t u^{\hbar,c} &= -\frac{1}{2}(\hbar \nabla-\frac{i}{c}A^{\hbar,c}
    )^2u^{\hbar,c} + V^{\hbar,c} u^{\hbar,c} -\frac{1}{2} \frac{\hbar}{c} (\sigma \cdot B^{\hbar,c}) u^{\hbar,c}, \label{eq:PPW_Pauli}\\
    -\Delta V^{\hbar,c} &= \rho^{\hbar,c} = -|u^{\hbar,c}|^2, \\
    -\Delta A^{\hbar,c} &= \frac{1}{c}J^{\hbar,c} \label{eq:PPW_PoissonA}
\end{align}
with initial data
\begin{equation}
    u^{\hbar,c}(x,0) = u^{\hbar,c}_I(x) \in (L^2(\mathbb{R}^3))^2.
    \label{eq:ppw_data}
\end{equation}
and Pauli current density, given by
\begin{equation}
    J^{\hbar,c}(u^{\hbar,c},A^{\hbar,c}) = \Im(\overline{u^{\hbar,c}}(\hbar\nabla -\frac{i}{c}A^{\hbar,c})u^{\hbar,c}) -{\hbar}\nabla \times (\overline{u^{\hbar,c}} \sigma u^{\hbar,c}), \label{eq:PPW_current},
\end{equation}
Since $A^{\hbar,c}$ is coupled to $u^{\hbar,c}$ we write a superscript $\hbar,c$ in order to emphasize its dependence on the semiclassical and the relativistic parameter. Compare  \eqref{eq:PPW_current} to \eqref{eq:Pauli_current} and notice that in the former the magnetic potential depends on $\hbar$ and $c$. The semiclassical limit of \eqref{eq:PPW_Pauli}-\eqref{eq:PPW_PoissonA} is work in progress \cite{MaMo23}. The classical limit $c\rightarrow \infty, \hbar \rightarrow 0$ of the DM equation to the VP equation was proven in \cite{mauser2007convergence} where the authors essentially first perform the non-relativistic limit to the SP equation and then the semiclassical limit to the VP equation.  The semiclassical limit of the DM equation to the \emph{relativistic Vlasov-Maxwell} equation is a very hard open problem.  The semiclassical limit of \eqref{eq:PP_Pauli_unscaled}-\eqref{eq:PP_Poisson_unscaled} is a small step towards this goal, being the first true extension of the SP results in \cite{lions1993mesures},\cite{markowich1993classical}. 


\subsection{Wigner transforms and the mixed state Pauli-Poisson equation}
\label{sec:wigner}
Mixed states in quantum mechanics represent a statistical ensemble of possibles states and are the fundamental object since a pure state is a special case of a mixed state. It expresses that, in general, we cannot prepare a quantum system in one precise (pure) state, but rather with a certain probability $\lambda_j$ in one of many possible states $u_j$. The mixed state formulation is necessary from a technical point of view when dealing with the semiclassical limit of the SP and Pauli-Poisson equations since uniform $L^2$ estimates for the Wigner transform are only possible in a mixed state formulation, cf. Assumption \ref{thm:remark_pure_states}. A mixed state is represented by the density matrix which is defined as follows.

Let $\{u^{\hbar}_j\}_{j\in \mathbb{N}}$, $u^{\hbar}_j = (u_{j,1}^{\hbar},u_{j,2}^{\hbar})^T$ be an orthonormal system in $(L^2(\mathbb{R}^3))^2$. We define the density matrix $\rho^{\hbar}$
\begin{align} \label{eq:Def_rho}
    \rho^{\hbar}(x,y,t) &:=  \sum_{j=1}^{\infty} \lambda^{\hbar}_j u^{\hbar}_j(x,t) \overline{u^{\hbar}_j(y,t)} \\ &= \sum_{j=1}^{\infty} \lambda^{\hbar}_j \left(u_{j,1}^{\hbar} (x,t) {u_{j,1}^{\hbar}(y,t)^*} +u_{j,2}^{\hbar} (x,t) {u_{j,2}^{\hbar}(y,t)^*}\right) \nonumber ,
\end{align}
where  $\lambda = \{\lambda_j^{\hbar}\}_{j\in \mathbb{N}}$ is a normally convergent series such that $\lambda_j^{\hbar} \geq 0$ and $\sum_j \lambda_j^{\hbar} = 1$. The $\lambda_j^{\hbar}$'s are the occupation probabilities  of the states $u_j^{\hbar}$. If  there is a $k$ such that $\lambda_j^{\hbar} = 1$ for $j=k$ and $\lambda_j^{\hbar} = 0$ otherwise then $\rho^{\hbar}$ represents a \emph{pure state}. Otherwise it represents a \emph{mixed state}. The density matrix $\rho^{\hbar}$ can be considered as the kernel of a Hilbert-Schmidt, hermitian, positive and trace class operator $\varrho^{\hbar}$ on $L^2(\mathbb{R}^3)$, called \emph{density operator}. As usual in physics, we will often identify $\rho^{\hbar}$ and $\varrho^{\hbar}$. The diagonal of $\rho^{\hbar}(x,y)$ corresponds to the mixed state particle density and is defined by
\begin{equation}
    \rho^{\hbar}_{\text{diag}}(x) := \rho^{\hbar}(x,x) = \sum_{j=1}^{\infty} \lambda_j^{\hbar} |u_j^{\hbar}(x)|^2 \in L^1(\mathbb{R}^3_x).
    \label{eq:rho_diag}
\end{equation}
Note that since $u^{\hbar}_j\in (L^2(\mathbb{R}^3))^2$ and since $\sum_j \lambda^{\hbar}_j$ converges normally this expression is well-defined. Indeed, consider
\begin{equation*}
    \rho^{\hbar}(x+z,x) = \sum_{j=1}^{\infty} \lambda_j^{\hbar} u^{\hbar}_j(x+z)\overline{u^{\hbar}_j(x)}.
\end{equation*}
We can then let $z\rightarrow 0$ since $\rho^{\hbar}(x+z,x) \in C(\mathbb{R}_z^3,L^1(\mathbb{R}^3_x))$.
The time evolution of $\rho^{\hbar}$ is given by the von Neumann equation:
\begin{equation}
    i\hbar \frac{\partial \rho^{\hbar}}{\partial t} = [H,\rho^{\hbar}].
    \label{eq:von_Neumann_Liouville}
\end{equation}
The scalar Wigner transform $f^{\hbar}(x,\xi,t)$ of $\rho^{\hbar}$ is defined by
\begin{equation}
    f^{\hbar}(x,\xi,t) := \frac{1}{(2\pi\hbar)^{3}} \int_{\mathbb{R}^3_y} e^{-i\xi\cdot y} \rho^{\hbar}(x+\frac{\hbar y}{2}, x-\frac{\hbar y}{2},t) \dd y.
    \label{eq:WT_rho}
\end{equation}
Note that some authors use the opposite sign in the exponential or a different normalization. 
More generally, we define the \emph{Wigner matrix} $F^{\hbar}$ (cf. \cite{gerard1997homogenization}) as the Wigner transform of a matrix valued density matrix $R^{\hbar}$, i.e.
\begin{equation}
    F^{\hbar}(x,\xi,t) = \frac{1}{(2\pi\hbar)^3} \int_{\mathbb{R}_y^3} e^{-i\xi \cdot y} R^{\hbar}(x+\frac{\hbar y}{2}, x-\frac{\hbar y}{2},t) \dd y,
    \label{eq:wigner_matrix}
\end{equation}
where
\begin{equation*}
    R^{\hbar}(x,y,t) := \sum_{j=1}^{\infty} \lambda_j^{\hbar} u_j^{\hbar}(x,t)\otimes \overline{u_j^{\hbar}(y,t)},
\end{equation*}
where $\otimes$ denotes the tensor product of vectors. Then we can define the scalar Wigner transform as $f^{\hbar}=\Tr(F^{\hbar})$ and the scalar density matrix as $\rho^{\hbar} = \Tr(R^{\hbar})$ where $\Tr$ denotes the $2\times 2$ matrix trace. Similarly we can define $R^{\hbar}_{\text{diag}}$ as 
\begin{equation}
    R^{\hbar}_{\text{diag}}(x) = R^{\hbar}(x,x).
\end{equation}
A simple calculation shows that
\begin{equation}
    \rho^{\hbar}_{\text{diag}}(x) = \int_{\mathbb{R}^3_{\xi}} f^{\hbar}(x,\xi) \dd \xi.
    \label{eq:density_wigner}
\end{equation}
and
\begin{equation}
    R^{\hbar}_{\text{diag}}(x) = \int_{\mathbb{R}^3_{\xi}} F^{\hbar}(x,\xi) \dd \xi.
    \label{eq:density_wigner_matrix}
\end{equation}
Since $\{u_j^{\hbar}\}_{j\in \mathbb{N}}$ is a bounded family in $(L^2(\mathbb{R}^3))^2$, $f^{\hbar}$ (resp. $F^{\hbar}$) is a bounded family in $\mathcal{S}'(\mathbb{R}^3_x \times \mathbb{R}^3_{\xi})$ (resp. $(\mathcal{S}'(\mathbb{R}^3_x \times \mathbb{R}^3_{\xi}))^{2\times 2}$) (cf. \cite{gerard1997homogenization}, Proposition 1.1).
The Wigner transform  $f^{\hbar}$ (resp. $F^{\hbar}$) is oscillatory in general and attains negative values. Thus it can only be interpreted as a \emph{quasi probability density}. 

Since $f^{\hbar}$ (resp. $F^{\hbar}$) is bounded in $S'(\mathbb{R}^3_x \times \mathbb{R}^3_{\xi})$ (resp. $(S'(\mathbb{R}^3_x \times \mathbb{R}^3_{\xi}))^{2 \times 2}$) there is a subsequence $\{\hbar_k\}$ going to $0$ such that $f^{\hbar_k}$ (resp. $F^{\hbar_k}$ ) converges to a non-unique limit $f$ (resp. $F$) (cf. \cite{gerard1997homogenization}). It can be shown that $f$ (resp. $F$) is a nonnegative Radon measure, (resp. matrix valued Radon measure), i.e. for all $z\in \mathbb{C}^2$,
\begin{equation*}
\sum_{i,j} F_{ij} z_i \overline{z_j} \geq 0.
\end{equation*}
One possible way of showing the non-negativity of $f$ is to define the Husimi function $\tilde{f}^{\hbar}$ (resp. $\tilde{F}^{\hbar}$)
\begin{equation}
    \label{eq:Husimi}
    \tilde{f}^{\hbar}(x,\xi) := f^{\hbar}(x,\xi) \ast_x G^{\hbar}(x) \ast_{\xi} G^{\hbar}(\xi), \quad G^{\hbar}(z) := \frac{1}{(\pi\hbar)^{3/2}}e^{-|z|^2/\hbar}.
\end{equation}
One can easily see that $\tilde{f}^{\hbar}$ defines a pointwise nonnegative function since the variance of the Gaussian is chosen to be $\hbar$. This is related to the uncertainty principle which states that the product of the variances $\sigma_x$ and $\sigma_{\xi}$ of $x$ and $\xi$ is always greater than $\hbar/2$, i.e. $\sigma_x \sigma_{\xi} \geq \hbar/2$. 

One observes that the accumulation points of $f^{\hbar}$ (resp. $F^{\hbar}$) are the accumulation points of $\tilde{f}^{\hbar}$ (resp. $\tilde{F}^{\hbar}$). One can alternatively prove the nonnegativity of $f$ or $F$ without using the Husimi function by an argument using the Bochner-Schwartz theorem or by an argument using coherent states (cf. \cite{gerard1997homogenization}). $f$ is called \emph{Wigner measure} and $F$ is called \emph{Wigner matrix measure}, (cf. \cite{lions1993mesures, gerard1997homogenization}). It is related to the semiclassical measures in \cite{gerard1991mesures},\cite{gerard1997homogenization} and measures the loss of compactness in $L^2$ in \cite{gerard1991microlocal}. For details on the convergence of Wigner transforms and the Husimi function we refer to Appendix \ref{sec:wigner measures}. 

Note that one can also define the following test function space (in fact an algebra) $\mathcal{A}$ which is more adapted to the Wigner transform $f^{\hbar}$,
\begin{equation}
    \label{algebra A}
    \mathcal{A} := \{\phi \in C_0(\mathbb{R}^3_x \times \mathbb{R}^3_{\xi}) \colon \mathcal{F}_{\xi}[\phi](x,\eta) \in L^1(\mathbb{R}^3_{\eta}, C_0(\mathbb{R}^3_x ))\}
\end{equation}
Under the same assumptions one can show that $f^{\hbar}$ and $F^{\hbar}$ have a weak limit in $\mathcal{A}'$. However, this technicality will make no difference to our analysis and we will either use $\mathcal{S}'$ or $\mathcal{A}'$. The algebra $\mathcal{A}$ was introduced in \cite{lions1993mesures} and used e.g. in \cite{zhang2002limit}.
 \\

Since we have to work with mixed states we formulate a mixed state version of the Pauli-Poisson equation \eqref{eq:PP_Poisson_unscaled}-\eqref{eq:Pauli_current}. Let $\{u_j^{\hbar}\}$ be an $(L^2(\mathbb{R}^3))^2$-orthonormal system. Then the  \emph{mixed state Pauli-Poisson equation} is given by
\begin{align}
    i\hbar\partial_t u_j^{\hbar} &= -\frac{1}{2}(\hbar \nabla-iA)^2u_j^{\hbar} + V^{\hbar} u_j^{\hbar} -\frac{1}{2} \hbar (\sigma \cdot B) u_j^{\hbar}, \label{eq:PP_Pauli_mixed}\\
    -\Delta V^{\hbar} &= \sum_{j=1}^{\infty} \lambda_j^{\hbar} |u_j^{\hbar}|^2 = \rho^{\hbar}_{\text{diag}}, \\
    u_j^{\hbar}(x,0) &= u^{\hbar}_{j,I}(x) \in (L^2(\mathbb{R}^3))^2.
    \label{eq:PP_data_mixed}
\end{align}
where the mixed state Pauli current density is given by
\begin{equation}
    J^{\hbar}(u^{\hbar},A) = \sum_{j=1}^{\infty} \lambda_j^{\hbar}\left[\Im(\overline{u_j^{\hbar}}({\hbar}\nabla -{i}A)u_j^{\hbar}) -{\hbar}\nabla \times (\overline{u_j^{\hbar}} \sigma u_j^{\hbar})\right], \label{eq:PPW_current_mixed}
\end{equation}
or, similar to \eqref{eq:J compact}
\begin{equation}
 J^{\hbar} =  \sum_{j=1}^{\infty} \lambda_j^{\hbar} \Re \left( \overline{u^{\hbar}_j}
  {\sigma}({\sigma} \cdot (- i \hbar \nabla -
  A)) u_j^{\hbar} \right),
\end{equation}
It can be written as a first order moment of the Wigner transform, 
\begin{equation}
    J^{\hbar} = \int_{\mathbb{R}^3_{\xi}} \Tr(\sigma (\sigma \cdot(\xi-A(x))F^{\hbar}(x,\xi))) \dd \xi
\end{equation}
The continuity equation holds for the mixed densities:
\begin{equation}
    \partial_t \rho^{\hbar}_{\text{diag}} + \text{div}_x  J^{\hbar} = 0.
\end{equation}
Rewriting the mixed state Pauli-Poisson equation \eqref{eq:PP_Pauli_mixed}-\eqref{eq:PP_data_mixed} in the density matrix formulation using the von Neumann equation and taking its Wigner transform one obtains the \emph{Pauli-Wigner-Poisson equation} for $F^{\hbar}$,
\begin{align}
\begin{split}
    \partial_t F^{\hbar} + \xi \cdot \nabla_x F^{\hbar} - \mathcal{F}_y[\beta[A]]\ast_{\xi} \nabla_x F^{\hbar} - \theta[A] (\xi F^{\hbar}) + \frac{1}{2}\theta[|A|^2]F^{\hbar}& \\ - \frac{\hbar}{2}\theta[\sigma \cdot B]F^{\hbar} +\theta[V^{\hbar}]F^{\hbar}&= 0, \label{eq:pauli_wigner}
\end{split}\\
-\Delta V^{\hbar} = \rho_{\text{diag}}^{\hbar} &= \int_{\mathbb{R}^3_{\xi}} f^{\hbar} \dd \xi, \\
F^{\hbar}(t=0) &= F^{\hbar}_I,
\label{eq:pauli wigner data}
\end{align}
where $\theta[\cdot]$ is the pseudo-differential operator defined by
\begin{equation}
    (\theta[\cdot]\Phi^{\hbar})(x,\xi,t) := \frac{1}{(2\pi)^3}\int_{\mathbb{R}^6} \delta[\cdot](x,y,t)\Phi^{\hbar}(x,\eta,t) e^{-i(\xi-\eta)\cdot y} \dd \eta \dd y. 
    \label{eq:PDO}
\end{equation}
where
\begin{equation}
    \beta[g] := \frac{1}{2}(g(x+\frac{\hbar y}{2})+g(x-\frac{\hbar y}{2})),
    \label{eq:beta}
\end{equation}
and 
\begin{equation}
    \delta[g] := \frac{i}{\hbar}(g(x+\frac{\hbar y}{2})-g(x-\frac{\hbar y}{2})).
    \label{eq:delta}
\end{equation}
The initial Wigner matrix $F_I^{\hbar}$ is the Wigner transform of the initial matrix valued density matrix $R^{\hbar}_I$, i.e.
\begin{align}
    F^{\hbar}_I &= \frac{1}{(2\pi\hbar)^{3}} \int_{\mathbb{R}^3} e^{-i\xi\cdot y} R^{\hbar}_I(x+\frac{\hbar y}{2}, x-\frac{\hbar y}{2}) \dd y, \\ 
        R^{\hbar}_I(x,y) &= \sum_{j=1}^{\infty} \lambda_j^{\hbar} u_{j,I}^{\hbar}(x) \otimes \overline{u_{j,I}^{\hbar}(y)}.
\end{align}
The energy of the Pauli-Wigner equation is given by
\begin{equation}
      E(t) := \tr(H_0 R^{\hbar}(t)) + \iint_{\mathbb{R}_x^3 \times \mathbb{R}_y^3} \frac{\rho^{\hbar}_{\text{{diag}}}(x,t) \rho_{\text{{diag}}}^{\hbar}(y,t)}{|x-y|} \dd x \dd y.
      \label{eq:energy_pauli_wigner}
\end{equation}
In fact the second term on the RHS is equivalent to
\begin{equation*}
    \int |\nabla V^{\hbar}(x,t)|^2 \dd x.
\end{equation*}

\begin{remark}
We will use the important observation that a particular bound on the occupation probabilities $\lambda_j^{\hbar}$ in Assumption \ref{thm:remark_pure_states} (which excludes a pure state formulation!) leads to uniform in $\hbar$ bounds on the $L^2$-norm of the Wigner matrix $F$. In order to pass to the limit we also need uniform estimates for the density $\rho^{\hbar}_{\text{diag}}$ which were obtained in \cite{lions1993mesures} using Lieb-Thirring estimates. In the case of the Pauli equation we have to use magnetic Lieb-Thirring estimates adapted to the Pauli Hamiltonian from \cite{shen1998moments}. The analysis of the Pauli equation is considerably harder than for the magnetic Schrödinger equation without spin due to the presence of the magnetic field in the Stern-Gerlach term $\sigma\cdot B$ and the existence of zero modes (i.e. non-trivial eigenstates with eigenvalue zero), cf. \cite{erdHos1997semiclassical}.
\end{remark}

\subsection{Main result}

In the following we will give a rigorous proof by a nontrivial extension of the Wigner measure analysis in \cite{gerard1997homogenization},\cite{lions1993mesures},\cite{markowich1993classical} from the scalar case of a simple Schr\"odinger equation to the 2-spinor case of the Pauli equation.
The density matrix formulation introduced in the section above is necessary because for the nonlinear case $V^{\hbar} = |x|^{-1} \ast \rho^{\hbar}$ uniform $L^2$-estimates for the Wigner transform are only possible in this setting. Indeed, $\|f^{\hbar}\|_2 \leq C$ implies that $\hbar^{-3} \tr(\rho_{\hbar}^2) \leq C$ (use \eqref{eq:WT_rho}, Plancherel and a change of variables). Together with the requirements $\tr(\rho^{\hbar})=1$ and $\lambda_j^{\hbar} \geq 0$ this implies that an infinite number of the $\lambda_j^{\hbar}$'s has to be different from zero. Therefore we make the following assumption. 

\begin{assumption}
\label{thm:remark_pure_states}
Let $R^{\hbar}$ or $\rho^{\hbar}$ be a matrix valued density matrix or density matrix defined by an orthonormal system $\{u_j^{\hbar}\}\subset (L^2(\mathbb{R}^3))^2$ and occupation probabilities $\lambda^{\hbar}_j \in [0,1]$. We assume that
\begin{align}
    \lambda_j^{\hbar} \geq 0, \quad \sum_{j=1}^{\infty} \lambda^{\hbar}_j = 1 ,
\end{align}
\begin{equation}
    \frac{1}{\hbar^3}\sum_{j=1}^{\infty} (\lambda^{\hbar}_j)^2 = \frac{1}{\hbar^3} \|\lambda^{\hbar}\|_2^2\leq C. \label{eq:weight_conditionC}
\end{equation}
Since \eqref{eq:weight_conditionC} implies that the sequence $\{\lambda^{\hbar}\}$ depends on $\hbar$ the reason for the superscript becomes apparent.
\end{assumption}

We have the following main result for the semiclassical limit of the linear Pauli equation and the Pauli-Poisson equation.
\begin{theorem}
\label{thm:main} \textbf{\emph{Semiclassical limit of linear Pauli and Pauli-Poisson}}

Let $\{u^{\hbar}_j\}_{j\in \mathbb{N}}\in C(\mathbb{R}_t,(L^2(\mathbb{R}_x^3))^2)$ be a solution of the mixed state Pauli-Poisson equation \eqref{eq:PP_Pauli_mixed}-\eqref{eq:PP_data_mixed} with associated matrix valued density matrix $R^{\hbar}$ such that the occuptation probabilities satisfy Assumption \ref{thm:remark_pure_states}. Let $F^{\hbar}$ be the associated Wigner matrix solving the Pauli-Wigner equation \eqref{eq:pauli_wigner} with initial data $F^{\hbar}_I(x,p) = F^{\hbar}(x,p,0)$. Assume that $F^{\hbar}_I$ converges up to a subsequence in $\mathcal{S}'(\mathbb{R}^3_x \times \mathbb{R}^3_p)^{2\times 2}$ to a nonnegative matrix-valued Radon measure $F_I$. Let $p=\xi-A(x)$.
\begin{enumerate}[label=(\roman*)]
    \item \label{prop:C1} \emph{\textbf{Linear Pauli equation.}} Let $A,V\in C^1(\mathbb{R}^3)$ such that $B :=\nabla \times A\in C(\mathbb{R}^3)$. 
Then $F^{\hbar}$ converges weakly* up to a subsequence in $(\mathcal{S}')^{2\times 2}$ to $F\in C_b(\mathbb{R}_t,\mathcal{M}^{2\times 2}_{w*})$ such that $F$ solves the \textbf{\emph{magnetic Vlasov equation with Lorentz force}}
\begin{equation}
     \partial_t F + p \cdot \nabla_x F +(-\nabla_x V + p\times B)\cdot \nabla_p F = 0,
     \label{eq:limit vlasov}
\end{equation}
in $(\mathcal{D}')^{2\times 2}$ verifying the initial condition
\begin{equation}
    F(x,p,0) = F_I(x,p) \quad \text{in }\mathbb{R}_x^3\times \mathbb{R}_p^3.
    \label{eq:limit vlasov data}
\end{equation}
    If additionally, $A,V \in C^{1,1}(\mathbb{R}^3)$ and there exist constants $C_1,C_2>0$ such that
\begin{equation}
    V(x) > -C(1+|x|^2) \quad \text{for all } x\in \mathbb{R}^3
    \label{eq:V bounded from below}
\end{equation}
\begin{equation}
    |A(t)| \lesssim e^{C_2t} \quad \text{for all } t
\end{equation}
then $f=\Tr(F)$ is the unique solution  in $C_b(\mathbb{R}_t,\mathcal{M}_{w*})$ of the scalar version of \eqref{eq:limit vlasov}-\eqref{eq:limit vlasov data}and $f$ is given by the transport of $f_I$ by the Hamiltonian flow
\begin{equation}
    \dot{x} = p, \quad \dot{p} = E+p\times B.
    \label{eq:hamiltonian equations}
\end{equation}
where $E:= -\nabla_x V$.
\item \label{thm:semiclassical_limit_nonlinear} \emph{\textbf{Pauli-Poisson equation.}}
Let $V^{\hbar}$ be given by $-\Delta V^{\hbar}= \rho^{\hbar}_{\text{\emph{diag}}}$ and suppose $A \in W^{1,7/2}(\mathbb{R}^3)$. 
Moreover, suppose that $\{F_I^{\hbar}\}$ is a bounded sequence in $(L^2(\mathbb{R}^6))^{2\times 2}$ and that the initial energy $E(0)$ is bounded independently of $\hbar$. Then $F^{\hbar}$ converges weakly* up to a subsequence in $L^{\infty}(I,L^2(\mathbb{R}_x^3 \times \mathbb{R}_{p}^3)^{2\times 2})$ to
\begin{equation*}
    F \in C_b(\mathbb{R}_t,\mathcal{M}^{2\times 2}_{w*})\cap L^{\infty}(I,L^1\cap L^2(\mathbb{R}_x^3 \times \mathbb{R}^3_{p})^{2\times 2})
\end{equation*}
such that $F$ solves  the \textbf{\emph{Vlasov-Poisson equation with Lorentz force}}
\begin{equation}
     \partial_t F + p \cdot \nabla_x F +(-\nabla_x V + p\times B)\cdot\nabla_p F = 0,
     \label{eq:limit vlasov poisson vlasov}
\end{equation}
in $(\mathcal{D}')^{2\times 2}$ and
\begin{equation}
    -\Delta V(x,t) = \rho_{\text{\emph{diag}}}(x,t), \quad \rho_{\text{\emph{diag}}}(x,t) = \int_{\mathbb{R}^3_{\xi}} f(x,p,t) \dd p
    \label{eq:limit vlasov poisson poisson}
\end{equation}
where $f=\Tr(F)$, verifying the initial condition
\begin{equation}
    F(x,p,0) = F_I(x,p) \quad \text{in }\mathbb{R}_x^3\times \mathbb{R}_p^3.
    \label{eq:limit vlasov poisson data}
\end{equation}

\item \label{thm_Pauli_current} \emph{\textbf{Pauli current density.}}
Let $A \in W^{1,\frac{7}{2}}(\mathbb{R}^3)$. The mixed state Pauli current density $J^{\hbar}$ defined by
\begin{equation}
    J^{\hbar} = \sum_{j=1}^{\infty} \lambda_j^{\hbar} \left[ \Im(\overline{u_j^{\hbar}}(\hbar \nabla -iA)u_j^{\hbar}) - \hbar \nabla \times (\overline{u_j^{\hbar}}\sigma u_j^{\hbar}) \right]
\end{equation}
converges in $\mathcal{D}'$ to
\begin{equation}
    J = \int_{\mathbb{R}^3_p} pf \dd p.
\end{equation}
\end{enumerate}
\end{theorem}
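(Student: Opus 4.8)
The plan is to prove the three parts in sequence, using the Wigner/Husimi machinery together with the magnetic Lieb–Thirring and energy bounds, exactly paralleling the scalar Schrödinger–Poisson case of Lions–Paul and Markowich–Mauser but carrying the $2\times2$ matrix structure and the extra Stern–Gerlach and advective ($A\cdot\nabla$) terms throughout. Throughout one works with the change of variables $p=\xi-A(x)$, which turns the gauge-covariant kinetic momentum into the natural phase-space variable and makes the target Vlasov operator $\partial_t + p\cdot\nabla_x + (-\nabla_x V + p\times B)\cdot\nabla_p$ appear; the magnetic field $B=\nabla\times A$ enters precisely through the commutator of $\partial_{x}$ with $A$, producing the Lorentz term $p\times B$.

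\emph{Part \ref{prop:C1} (linear Pauli).} First I would extract a weak-$*$ limit $F$ of $F^\hbar$ in $(\mathcal S')^{2\times2}$ (possible since $\{u_j^\hbar\}$ is $L^2$-bounded and $\sum_j\lambda_j^\hbar=1$), and identify it as a nonnegative matrix-valued Radon measure via the Husimi regularization \eqref{eq:Husimi} applied componentwise, noting accumulation points of $F^\hbar$ and $\tilde F^\hbar$ coincide. Then I pass to the limit in the Pauli–Wigner equation \eqref{eq:pauli_wigner}: the transport term $\xi\cdot\nabla_x F^\hbar$ is linear and converges directly; for the pseudodifferential terms $\theta[A](\xi F^\hbar)$, $\frac12\theta[|A|^2]F^\hbar$, $\theta[V^\hbar]F^\hbar$ one uses that $\delta[g]\to \nabla g\cdot y$ (hence $\theta[g]\to -\nabla_x g\cdot\nabla_\xi$ in the appropriate symbol sense) when $g\in C^1$, and that $\mathcal F_y[\beta[A]]\to A(x)\,\delta(y)$ so $\mathcal F_y[\beta[A]]\ast_\xi\nabla_x F^\hbar \to A(x)\cdot\nabla_x F$; these are standard symbol-calculus limits requiring only $A,V\in C^1$ and $B\in C$. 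Recombining the limit terms and undoing $\xi\mapsto p+A(x)$ yields \eqref{eq:limit vlasov}; the $-\tfrac\hbar2\theta[\sigma\cdot B]F^\hbar$ term carries an explicit factor $\hbar$ and vanishes, which is why spin drops out of the classical equation (though $F$ remains matrix-valued). The initial condition \eqref{eq:limit vlasov data} passes by the assumed convergence of $F_I^\hbar$. For uniqueness under $A,V\in C^{1,1}$ with the growth bounds \eqref{eq:V bounded from below} and $|A(t)|\lesssim e^{C_2t}$, I would invoke the DiPerna–Lions / Ambrosio theory: the vector field $(p,-\nabla_xV+p\times B)$ is then locally Lipschitz with the sublinear growth needed for a global well-defined Hamiltonian flow \eqref{eq:hamiltonian equations}, so the scalar transport equation for $f=\Tr(F)$ has the unique solution $f_I\circ(\text{flow})^{-1}$ — this step is a direct transcription of Theorem IV.1 of \cite{lions1993mesures} with the magnetic vector field in place of $-\nabla V$.

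\emph{Part \ref{thm:semiclassical_limit_nonlinear} (Pauli–Poisson).} Here the new input is uniform-in-$\hbar$ compactness strong enough to pass to the limit in the nonlinear term $\theta[V^\hbar]F^\hbar$ with $-\Delta V^\hbar=\rho^\hbar_{\mathrm{diag}}$. The boundedness of $\{F_I^\hbar\}$ in $(L^2(\mathbb R^6))^{2\times2}$, which by Assumption \ref{thm:remark_pure_states} is equivalent to the weight condition \eqref{eq:weight_conditionC}, propagates in time (the Wigner flow is $L^2$-isometric up to the potential terms, which are handled by Gronwall), giving $F^\hbar$ bounded in $L^\infty(I,L^2)$, hence a weak-$*$ limit $F\in L^\infty(I,L^2)$. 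The bounded initial energy $E(0)$ together with conservation of \eqref{eq:energy_pauli_wigner} controls $\tr(H_0 R^\hbar)$ and $\|\nabla V^\hbar\|_2$ uniformly; combining this with the \emph{magnetic Lieb–Thirring estimates adapted to the Pauli Hamiltonian} from \cite{shen1998moments} (the Pauli analogue of the Lieb–Thirring bound used in \cite{lions1993mesures}) yields uniform bounds on $\rho^\hbar_{\mathrm{diag}}$ in $L^1\cap L^{5/3}$ and on the current, and hence $\nabla V^\hbar$ bounded in $L^2\cap L^\infty_{t}W^{1,p}$ for suitable $p$. The hypothesis $A\in W^{1,7/2}(\mathbb R^3)$ enters exactly to make the magnetic kinetic energy and the $\theta[A]$, $\theta[|A|^2]$ terms controllable at this regularity (via Sobolev embedding in $3d$, $W^{1,7/2}\hookrightarrow L^\infty$ is false but the relevant $L^p$–$L^q$ Hölder pairings with the Lieb–Thirring exponents close). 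One then shows $\rho^\hbar_{\mathrm{diag}}\to\rho_{\mathrm{diag}}=\int f\,dp$ strongly in, say, $L^p_{loc}$ using the velocity-averaging / compactness lemma as in \cite{lions1993mesures}, so $V^\hbar\to V$ strongly enough that $\theta[V^\hbar]F^\hbar\to -\nabla_xV\cdot\nabla_p F$; the rest of the terms pass as in Part (i). The limit relation \eqref{eq:limit vlasov poisson poisson} and initial data \eqref{eq:limit vlasov poisson data} then follow. \textbf{I expect the main obstacle to be precisely this step}: obtaining the uniform-in-$\hbar$ bounds on $\rho^\hbar_{\mathrm{diag}}$ (and the strong compactness needed to close the nonlinearity) in the presence of the magnetic field and the Stern–Gerlach term, i.e. verifying that the Pauli Lieb–Thirring inequalities of \cite{shen1998moments} combine with the energy bound and the $W^{1,7/2}$ hypothesis on $A$ to give exactly the estimates that \cite{lions1993mesures} obtained in the non-magnetic scalar case — the zero-mode phenomenon for the Pauli operator (\cite{erdHos1997semiclassical}) is what makes the naive estimate fail and forces the use of the refined magnetic bounds.

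\emph{Part \ref{thm_Pauli_current} (Pauli current).} This is a corollary of the moment convergence. Writing $J^\hbar=\int_{\mathbb R^3_p}\Tr\big(\sigma(\sigma\cdot p)F^\hbar(x,p)\big)\,dp$ (after the shift $\xi\mapsto p+A$), the Pauli vector identity $(\sigma\cdot a)(\sigma\cdot b)=(a\cdot b)I+i(a\times b)\cdot\sigma$ gives $\Tr(\sigma_k(\sigma\cdot p))=2p_k$, so $J^\hbar=2\int p\,f^\hbar\,dp$ up to the normalization fixed by \eqref{eq:Pauli_current}, i.e. $J^\hbar=\int p f^\hbar dp$ in the paper's scaling. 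The only point is the legitimacy of passing $\hbar\to0$ under the $\xi$-integral; this is exactly the first-moment estimate in \cite{lions1993mesures}: the uniform $L^2$ bound on $F^\hbar$ together with the uniform kinetic-energy bound $\int|p|^2\Tr F^\hbar\,dx\,dp\le C$ (from $E(0)$ bounded and \cite{shen1998moments}) gives equi-integrability in $p$, so $\int p f^\hbar\,dp\to\int p f\,dp=J$ in $\mathcal D'$, and the curl term $\hbar\nabla\times(\overline{u_j^\hbar}\sigma u_j^\hbar)$ vanishes in the limit by its explicit $\hbar$ prefactor and the uniform $L^1$ bound on the spin density. The hypothesis $A\in W^{1,7/2}$ is again used to make the $A$-dependent part of $J^\hbar$ converge together with $\rho^\hbar_{\mathrm{diag}}$.
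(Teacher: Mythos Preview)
Your proposal is essentially correct and follows the same route as the paper: term-by-term passage to the limit in the Pauli--Wigner equation \eqref{eq:pauli_wigner} for Part~(i), energy conservation plus the magnetic Lieb--Thirring inequality of \cite{shen1998moments} for the uniform density bounds in Part~(ii), and first-moment convergence for Part~(iii). Your identification of the main obstacle --- that Pauli zero modes invalidate the naive kinetic-energy lower bound and force the use of Shen's refined estimate --- is exactly the crux.

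One concrete correction: the Shen inequality does \emph{not} give $\rho^\hbar_{\mathrm{diag}}\in L^{5/3}$ uniformly as in the non-magnetic Lions--Paul argument. The extra $\|B\|_{3q/2}^{3/2}$-term in \cite{shen1998moments} forces a different balance of exponents in the duality argument, and what one actually obtains is $\rho^\hbar_{\mathrm{diag}}\in L^{7/5}$ uniformly, under the hypothesis $B\in L^{7/2}$ (this is precisely why $A\in W^{1,7/2}$ is assumed rather than some other Sobolev space). The weaker exponent still closes: $\nabla|x|^{-1}\in L^{3/2,\infty}\subset L^{14/11}+L^q$, so Young gives $\nabla V^\hbar\in L^2_{\mathrm{loc}}$, and $J^\hbar\in L^{7/6}$ rather than $L^{5/4}$. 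Two further minor route differences: the paper gets strong compactness of $\nabla V^\hbar$ by Aubin--Lions (using $\partial_t\nabla V^\hbar\simeq J^\hbar$ bounded in $L^{7/6}$) rather than velocity averaging, and for uniqueness under $C^{1,1}$ it uses a direct Gr\"onwall bound on $|x(t)|^2$ along the Hamiltonian flow rather than invoking DiPerna--Lions. Also, $\|F^\hbar\|_2$ is exactly conserved (no Gronwall needed there).
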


We will prove Theorem \ref{thm:main} in Section \ref{sec:semiclassical_limit}. We have the following global wellposedness result for the mixed state Pauli-Poisson equation \eqref{eq:PP_Pauli_mixed}-\eqref{eq:PP_data_mixed}. Here,  $\mathbf{u} := \{u_j\}_{j\in \mathbb{N}}$ and $\mathbf{u}_{I} = \{u_{j,I}\}_{j\in \mathbb{N}}$. The energy space $\mathcal{H}^1(\mathbb{R}^3)$ will be defined in Section \ref{sec:GWP}.

\begin{theorem}
\label{thm:PP_gwp} \textbf{\emph{Global wellposedness of Pauli-Poisson}}

Let $A\in L^2_{\text{\emph{loc}}}(\mathbb{R}^3)$, $|B| \in L^{2}(\mathbb{R}^3)$. For any $u_I \in \mathcal{H}^1(\mathbb{R}^3)$ there exists a unique solution to the initial value problem \eqref{eq:PP_Pauli_mixed}-\eqref{eq:PP_data_mixed} in $C(\mathbb{R},\mathcal{H}^1(\mathbb{R}^3))\cap C^1(\mathbb{R},\mathcal{H}^1(\mathbb{R}^3)^*)$. In particular, the solution is in $(L^2(\mathbb{R}^3))^2$ for all times. If $\mathbf{u}_{n,I},\mathbf{u}_I \in \mathcal{H}^1(\mathbb{R}^3)$ are initial data satisfying $ \mathbf{u}_{n,I}\rightarrow\mathbf{u}_I$ in $\mathcal{H}^1(\mathbb{R}^3)$ with corresponding unique  solutions $\mathbf{u}_n\in C(\mathbb{R},\mathcal{H}^1)\cap C^1(\mathbb{R},\mathcal{H}^{1*})$ and  $\mathbf{u}\in C(\mathbb{R},\mathcal{H}^1)\cap C^1(\mathbb{R},\mathcal{H}^{1*})$ then $\mathbf{u}_n \rightarrow \mathbf{u}$ in $L^{\infty}(\mathbb{R},\mathcal{H}^1(\mathbb{R}^3))$.
\end{theorem}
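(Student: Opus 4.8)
The plan is to establish global wellposedness for the mixed state Pauli-Poisson equation \eqref{eq:PP_Pauli_mixed}-\eqref{eq:PP_data_mixed} by combining the linear theory for the time-dependent Pauli propagator with a fixed point / energy method for the nonlinear Poisson coupling, in the spirit of the treatment of the magnetic Schrödinger-Poisson equation in \cite{barbaroux2017well} and of the Schrödinger-Poisson system in \cite{brezzi1991three, illner1994global, castella1997l2}. First I would set up the energy space $\mathcal{H}^1(\mathbb{R}^3)$ as the form domain of the free Pauli Hamiltonian $H_0$ in \eqref{eq:P_Hamiltonian}, i.e. the closure of $(C_c^\infty)^2$ under the norm $\|u\|_{L^2}^2 + \|(\sigma\cdot(\hbar\nabla - iA))u\|_{L^2}^2$; the hypotheses $A\in L^2_{\mathrm{loc}}$ and $|B|\in L^2$ ensure via the diamagnetic inequality and the Pauli vector identity that $H_0$ is self-adjoint and essentially the magnetic Dirichlet form plus the Stern-Gerlach term $-\tfrac{\hbar}{2}\sigma\cdot B$, which by $|B|\in L^2$ and Sobolev embedding ($L^2\hookrightarrow$ relatively form-bounded with respect to $-\Delta$ in $3d$, with small bound) is a small form perturbation, so that $\mathcal{H}^1$ is well-defined and the linear propagator $e^{-itH_0/\hbar}$ is a unitary $C_0$-group preserving $\mathcal{H}^1$. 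Then I would write the mixed-state system in Duhamel form $u_j(t) = e^{-itH_0/\hbar}u_{j,I} - \tfrac{i}{\hbar}\int_0^t e^{-i(t-s)H_0/\hbar}(qV^{\hbar}(s)u_j(s))\,ds$ with $V^\hbar = |x|^{-1}\ast \rho^\hbar_{\mathrm{diag}}$.

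The key steps, in order: (1) Local existence: show that for initial data in $\mathcal{H}^1$ with $\sum_j\lambda_j\|u_{j,I}\|_{\mathcal{H}^1}^2 < \infty$, the density $\rho^\hbar_{\mathrm{diag}} = \sum_j\lambda_j|u_j|^2$ lies in $L^1\cap L^3$ (by the mixed-state Gagliardo–Nirenberg / Lieb–Thirring bound, since each $u_j\in \mathcal{H}^1$ controls $|u_j|\in H^1$ by the diamagnetic inequality), hence $V^\hbar\in L^\infty + \dot H^1$ with $\nabla V^\hbar\in L^2\cap L^6$ and $V^\hbar$ bounded, and moreover multiplication by $V^\hbar$ maps $\mathcal{H}^1\to\mathcal{H}^1$ with norm controlled by $\|\rho^\hbar_{\mathrm{diag}}\|_{L^1\cap L^{3/2}}$; a contraction-mapping argument in $C([0,T];\ell^2_\lambda\mathcal{H}^1)$ on a small time interval then yields a unique local solution and its continuous dependence on the data. (2) Conservation laws: show formally, then rigorously by a standard regularization/commutator argument, that the total mass $\sum_j\lambda_j\|u_j(t)\|_{L^2}^2$ and the energy $E(t)$ in \eqref{eq:energy_pauli_wigner} are conserved. (3) A priori bound: use $|B|\in L^2$ again to absorb the Stern-Gerlach contribution to the energy into the magnetic kinetic term with a small constant, and positivity of the Coulomb term, to deduce a uniform-in-time bound on $\sum_j\lambda_j\|u_j(t)\|_{\mathcal{H}^1}^2$; since this bound does not blow up in finite time, the local solution extends to all of $\mathbb{R}$ by the usual continuation argument. (4) Uniqueness and Lipschitz dependence globally: propagate the local continuous-dependence estimate with a Gronwall argument using the a priori bound.

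The main obstacle I expect is handling the Stern-Gerlach term $-\tfrac{\hbar}{2}(\sigma\cdot B)u$ in the energy estimate under only the hypothesis $|B|\in L^2(\mathbb{R}^3)$: unlike a bounded $B$, an $L^2$ magnetic field is not a small perturbation in operator norm, only in form sense, so one must argue carefully that $\big|\langle (\sigma\cdot B)u, u\rangle\big| \le \|B\|_{L^2}\|u\|_{L^6}\|u\|_{L^3} \le \varepsilon \|\,|u|\,\|_{H^1}^2 + C_\varepsilon\|u\|_{L^2}^2 \le \varepsilon\|(\sigma\cdot(\hbar\nabla - iA))u\|_{L^2}^2 + C_\varepsilon\|u\|_{L^2}^2$ (the last step by the diamagnetic inequality), with $\varepsilon$ small enough to be absorbed; summing against $\lambda_j$ then closes the estimate. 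A secondary technical point is justifying the conservation of energy rigorously despite $A$ being only $L^2_{\mathrm{loc}}$ and the solution living merely in $\mathcal{H}^1$ (not $\mathcal{H}^2$); this is handled by a density argument, approximating the data and the potential and passing to the limit using the continuous dependence of step (1) — precisely the structure already demanded in the statement of the theorem.
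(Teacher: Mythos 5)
Your proposal follows essentially the same route as the paper: establish $\mathcal{H}^1$ as the form domain of $H_0$ by treating the Stern--Gerlach term as a relatively form-bounded (KLMN-type) perturbation of the magnetic Laplacian under $A\in L^2_{\text{loc}}$, $|B|\in L^2$, run a Duhamel / contraction-mapping argument in $C([-T,T],\mathcal{H}^1)$ for local wellposedness, prove charge and energy conservation (rigorously via density of smooth functions and continuous dependence, as you anticipate), and globalize via the blow-up alternative. The only cosmetic differences are in how the quadratic form bound on $\sigma\cdot B$ is extracted (the paper splits $|B|$ into a small-$L^2$ plus bounded piece and uses $\|B_1\|_2\|u\|_4^2$, whereas you interpolate $L^6$/$L^3$ and absorb by Young) and in packaging the nonlinearity (the paper proves a local Lipschitz estimate for $(|x|^{-1}\ast|\mathbf{u}|^2)\mathbf{u}$ in $\mathcal{H}^1$ directly rather than via mapping properties of $V^\hbar$); these are interchangeable and both close.
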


One can generalize the global wellposedness of the Pauli-Poisson equation to the PH equation \eqref{eq:pauli hartree} with slight changes in the proof (cf. \cite{michelangeli2015global}).

\begin{corollary}[Global wellposedness of PH]
\label{thm:PH_gwp}
Under the assumptions of Theorem \ref{thm:PP_gwp} and assuming that $W$ is even, $W \in L^{r_1}(\mathbb{R}^3) + L^{\infty}(\mathbb{R}^3)$ for $3/2 \leq r_1 \leq \infty$ and $\nabla W \in L^{r_2} + L^{\infty}$ for $1 \leq r_2 \leq \infty$ the PH equation is globally wellposed in $\mathcal{H}^1(\mathbb{R}^3)$.
\end{corollary}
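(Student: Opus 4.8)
The plan is to reduce the global wellposedness of the Pauli--Hartree equation \eqref{eq:pauli hartree} to the already-established Theorem \ref{thm:PP_gwp} by following the strategy of \cite{michelangeli2015global}, isolating the two places where the Coulomb kernel $W(x)=|x|^{-1}$ enters the proof of Theorem \ref{thm:PP_gwp} and checking that the hypotheses on $W$ and $\nabla W$ in the statement suffice to reproduce those estimates. Concretely, the proof of Theorem \ref{thm:PP_gwp} proceeds (as is standard, cf. \cite{castella1997l2,michelangeli2015global}) by (a) setting up the magnetic Pauli propagator on the energy space $\mathcal{H}^1(\mathbb{R}^3)$ using the self-adjointness of $H_0$ (which only involves $A$, $B$ and not $W$), (b) a fixed-point / Duhamel argument for local existence in which the nonlinear term $(W\ast|\mathbf{u}|^2)\mathbf{u}$ must be shown to be locally Lipschitz from $\mathcal{H}^1$ to itself (or to $\mathcal{H}^{1*}$), (c) conservation of mass and energy to upgrade to global existence, and (d) a Gronwall estimate on differences of solutions for the continuous dependence statement.

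The key steps, in order, are as follows. First I would record that under the stated splitting $W\in L^{r_1}+L^\infty$ with $r_1\ge 3/2$ and $\nabla W\in L^{r_2}+L^\infty$ with $r_2\ge 1$, the map $\mathbf{u}\mapsto (W\ast|\mathbf{u}|^2)\mathbf{u}$ is locally Lipschitz on $\mathcal{H}^1$; this uses the Hardy--Littlewood--Sobolev inequality to bound $W\ast|\mathbf{u}|^2$ in $L^\infty$ (the $L^{3/2}$ endpoint is exactly what makes $|x|^{-1}\ast|\mathbf{u}|^2\in L^\infty$ for $\mathbf{u}\in H^1\hookrightarrow L^6\cap L^2$, so $|\mathbf{u}|^2\in L^3\cap L^1$), and to bound $\nabla(W\ast|\mathbf{u}|^2) = (\nabla W)\ast|\mathbf{u}|^2$ in $L^2+L^\infty$ via the $L^1$ piece of $\nabla W$ and Young/HLS for the $L^{r_2}$ piece, controlling in particular the diamagnetic gradient $\nabla(W\ast|\mathbf{u}|^2)\cdot \mathbf{u}$ term. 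Second, I would feed this Lipschitz bound into the same fixed-point scheme used for Theorem \ref{thm:PP_gwp}, obtaining a unique local-in-time solution in $C([0,T),\mathcal{H}^1)\cap C^1([0,T),\mathcal{H}^{1*})$. Third, since $W$ is even the nonlinearity is a gradient, so the Hartree energy $E(t)=\tfrac12\|(\hbar\nabla-iA)\mathbf{u}\|_2^2 - \tfrac{\hbar}{2}\int (\sigma\cdot B)\mathbf{u}\cdot\overline{\mathbf{u}} + \tfrac12\iint W(x-y)|\mathbf{u}(x)|^2|\mathbf{u}(y)|^2$ plus $L^2$-mass are conserved; combined with the $|B|\in L^2$ hypothesis (used exactly as in Theorem \ref{thm:PP_gwp} to absorb the Stern--Gerlach term into the magnetic kinetic energy) and with boundedness of $W\ast|\mathbf{u}|^2$, this yields an a priori bound on $\|\mathbf{u}(t)\|_{\mathcal{H}^1}$ uniform on bounded time intervals, hence global existence by the usual continuation argument. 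Fourth, the continuous-dependence statement follows by writing the equation for $\mathbf{u}_n-\mathbf{u}$, using the local Lipschitz bound on the nonlinearity together with Gronwall, exactly as in Theorem \ref{thm:PP_gwp}.

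The main obstacle I anticipate is not any single estimate but the bookkeeping needed to verify that the energy functional is bounded below and coercive on $\mathcal{H}^1$ under the weak assumptions $W\in L^{3/2}+L^\infty$ (as opposed to $W\ge 0$): the interaction term need not be sign-definite, so one must show $\big|\iint W(x-y)|\mathbf{u}(x)|^2|\mathbf{u}(y)|^2\big|\lesssim \|W\|_{L^{3/2}+L^\infty}\|\mathbf{u}\|_2^{2}\|\mathbf{u}\|_{L^6}^{2}$ and then use an interpolation/small-constant argument (with the diamagnetic inequality $\|\mathbf{u}\|_{L^6}\lesssim \|\,|\nabla|\mathbf{u}|\,\|_{L^2}\lesssim \|(\hbar\nabla-iA)\mathbf{u}\|_{L^2}$) to absorb it into the magnetic kinetic energy, so that conservation of the total energy still controls the $\mathcal{H}^1$ norm. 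Once this coercivity bound is in place, together with the identical treatment of the Stern--Gerlach term via $|B|\in L^2$ as in Theorem \ref{thm:PP_gwp}, the remaining steps are routine modifications of that proof, which is why the result is stated as a corollary; I would simply indicate the changes rather than rewrite the argument in full.
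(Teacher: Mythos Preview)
Your proposal is correct and matches the paper's approach: the paper does not give a separate proof of this corollary but simply states that it follows from ``slight changes in the proof'' of Theorem~\ref{thm:PP_gwp}, referring to \cite{michelangeli2015global}, and what you outline---replacing the Coulomb-specific estimates in Lemma~\ref{thm:lipschitz} by the Young/HLS bounds available under $W\in L^{r_1}+L^\infty$, $\nabla W\in L^{r_2}+L^\infty$, then running the same fixed-point, conservation-law, and Gronwall arguments---is precisely that. Your identification of the coercivity issue for sign-indefinite $W$ (handled by splitting off a small-$L^{3/2}$ piece or interpolating when $r_1>3/2$, then absorbing into the kinetic term via the diamagnetic inequality and mass conservation) is the one genuinely new ingredient relative to the Coulomb case, and your sketch of it is correct.
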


\begin{remark}
The question arises whether the PH equation can be posed in arbitrary space dimensions. The three dimensional magnetic field $B=\nabla \times A$ has to be replaced by its $d$-dimensional generalization $\nabla \wedge A$. In this case, following the result for the m-SH equation \cite{michelangeli2015global}, the conditions for $W$ would be: $W$ even, $W \in L^{r_1}(\mathbb{R}^3) + L^{\infty}(\mathbb{R}^3)$ for $\max\{1,d/2\} \leq r_1 \leq \infty$ ($r_1 > 1$ if $d=2$) and $\nabla W \in L^{r_2} + L^{\infty}$ for $\max\{1,d/3\} \leq r_2 \leq \infty$.
\end{remark}

\begin{remark}
Global wellposedness of the VP equation \eqref{eq:limit vlasov poisson vlasov}-\eqref{eq:limit vlasov poisson data} without Lorentz force (i.e. $B=0$) data was established in \cite{pfaffelmoser1992global}.  The limit of strong magnetic fields of the VP equation with Lorentz force was established  in \cite{golse1999vlasov} and the homogenization limit in \cite{frenod1998homogenization}. Global weak solutions of the relativistic Vlasov-Maxwell equation were established in  \cite{diperna1989global}.
\end{remark}

\begin{remark}
If one considers small time scales only one can alternatively use WKB methods where a special ansatz for the form of the wave function is chosen.  The semiclassical limit of the SP equation to the Euler-Poisson equation using WKB analysis was shown by Zhang in \cite{zhang2002wigner, zhang2008wigner} and by  Alazard and Carles in \cite{alazard2007semi}. Grenier \cite{grenier1998semiclassical} proved the semiclassical limit of the cubic NLS and recently, Gui and Zhang \cite{gui2022semiclassical} proved the semiclassical limit of the Gross-Pitaevskii equation.  The semiclassical limit of the Pauli-Poisswell equation \eqref{eq:PPW_Pauli}-\eqref{eq:PPW_current} to the Euler-Poisswell equation using WKB analysis was shown in \cite{yang2023semi}.
\end{remark}

In the case of the linear Pauli equation with sufficiently smooth external electromagnetic potentials we can use Theorem 6.1 from  \cite{gerard1997homogenization} in order to heuristically determine the semiclassical limit equation of the Pauli-Poisson equation \eqref{eq:PP_Pauli_unscaled}-\eqref{eq:PP_data_unscaled}. Write the Pauli equation as 
\begin{align}
    \hbar \partial_t u^{\hbar} + P(x,\hbar D) u^{\hbar} &= 0, \quad x\in \mathbb{R}^3_x, t\in \mathbb{R} \\
    u^{\hbar}(x,0) &= u^{\hbar}_{I}(x) \in (L^2(\mathbb{R}^3))^2.
\end{align}
where $P(x,\hbar D)$ is the pseudo-differential operator with matrix-valued symbol given by
\begin{equation*}
    P(x,\xi) := \frac{i}{2} (\sigma \cdot (\xi -A(x)))^2 +i V\text{Id}.
\end{equation*}
The $2\times 2$-matrix $-iP(x,\xi)$ has one eigenvalue with multiplicity $2$, given by
\begin{equation}
    \lambda(x,\xi) =  \frac{1}{2}\|\xi-A(x)\|^2 + V(x),
    \label{eq:eigenvalue_pauli_symbol}
\end{equation}
where $\|\xi-A\|^2 = \sum_{j=1}^3 (\xi_j-A_j)^2$ and we can deduce that $F^{\hbar}$ defined in \eqref{eq:WT_rho} converges to $F$ solving
\begin{equation}
    \partial_t F + \{\lambda,F\} = 0.
    \label{eq:wigner_equation_from_gmmp}
\end{equation} 
where $\{h,g\} = \nabla_{\xi} h\cdot\nabla_x g - \nabla_{x} h\cdot\nabla_{\xi} g $ denotes the Poisson bracket.
Using \eqref{eq:eigenvalue_pauli_symbol} we have
\begin{equation}
    \partial_t F + \xi\cdot \nabla_x F + (\nabla_x A)\xi \cdot \nabla_{\xi}F - (\nabla_x A) A\cdot \nabla_{\xi} F - A\cdot \nabla_x F - \nabla_x V(x) \cdot\nabla_{\xi} F = 0 
    \label{eq:VLF_beforeCOV}
\end{equation}
We easily obtain after a change of variables $p = \xi -A$ and by using $B = \nabla \times A$, 
\begin{equation}
    \partial_t F + p \cdot \nabla_x F +(-\nabla_x V + p\times B)\nabla_p F = 0 \label{eq:Liouville_Lorentz},
\end{equation}
which is the Vlasov equation with Lorentz force for an electron. Note that for an electron with negative energy (a positron) there would be a minus sign in front of the Lorentz force term. 

\begin{remark}
For the linear Dirac equation it was shown by Spohn in \cite{spohn2000semiclassical} and in \cite{gerard1997homogenization} that in the semiclassical limit $\hbar \rightarrow 0$ the Wigner matrix measure $F$ for the electron component obeys
\begin{equation*}
    \partial_t F  +\{\lambda_-,F\}+i[H_s^{(-)},F] = 0
\end{equation*}
where
\begin{equation}
    H_s^{(-)} := -i[\Pi_-,\{\lambda_-,\Pi_-\}] - \frac{i}{2}\Pi_-\left(\lambda_-\{\Pi_-,\Pi_-\}-\lambda_+\{\Pi_+,\Pi_+\}\right)\Pi_-
    \label{eq:spinor_hamiltonian}
\end{equation}
where $\lambda_{\pm}$ are the two degenerate eigenvalues of the symbol matrix of the Dirac operator and $\Pi_{\pm}$ the respective projections on the eigenspaces. 
This is in fact a special case of a theorem from \cite{gerard1997homogenization} where more general pseudodifferential operators are considered. In \cite{spohn2000semiclassical}, $H_s^{(-)}$ is called \emph{spinor Hamiltonian} and it depends on the positron component as well which can be seen from the last term in \eqref{eq:spinor_hamiltonian}. For the linear Pauli equation the situation is different: There is one degenerate eigenvalue \eqref{eq:eigenvalue_pauli_symbol} whose eigenspace coincides with the whole space and the projection on the eigenspace is the identity $\text{Id}$. In this case, $H_s$ vanishes since
\begin{equation*}
    H_s = -i[\text{Id},\{\lambda,\text{Id}\}] = 0.
\end{equation*}
Therefore the Wigner matrix equation reduces to
\begin{equation*}
    \partial_t F + (\nabla_{\xi} \lambda\cdot\nabla_x F - \nabla_{x} \lambda\cdot \nabla_{\xi} F ) =0
\end{equation*}
Also, $R^{\hbar}_{\text{diag}}$ converges to 
\begin{equation*}
    R_{\text{diag}} = \int_{\mathbb{R}^3_{\xi}} F \dd \xi.
\end{equation*}
The spinor $u$ decouples into two independent Vlasov equations represented by the diagonal elements of the Wigner matrix $F$. This is consistent with the fact that the Stern-Gerlach term \eqref{eq:stern_gerlach_term} which couples the spin components $u_1$ and $u_2$ scales with $\hbar$ and vanishes in the semiclassical limit. 
\end{remark}

\subsection{Notation}


We use notation where $\mathcal{M}(\mathbb{R}^d)$ denotes the positive and bounded measures on $\mathbb{R}^d$. A subscript $w*$ indicates that the space is equipped with the weak star topology. $I=[-T,T]$ denotes a time interval. $W^{k,p}(\mathbb{R}^d)$ are the usual Sobolev spaces and $\mathcal{D}'$ is the space of distributions. For a vector space $X$ we use $X^{n\times n}$ to denote its matrix-valued analogue, i.e. $F\in X^{n\times n}$ means that $F$ is a $n\times n$ matrix with components in $X$. The expression $V_{-}$ denotes the negative part of $V$. $\Tr$ denotes the trace of a $n\times n$ matrix and $\tr$ denotes the trace of a trace class operator on a Hilbert space $\mathcal{H}$. If $M$ is a $n\times n$ matrix with values in $(L^p)^{n\times n}$, then $\|M\|_p$ denotes $\sum_{i,j}\|M_{ij}\|_p$.

\section{Global wellposedness of the Pauli-Poisson equation}
\label{sec:GWP}
In this section we are not interested in the semiclassical limit and therefore choose a scaling where $\hbar=1$. The mixed state Pauli-Poisson equation is given by
\begin{align}
        i\partial_t u_j &= -\frac{1}{2}( \nabla-{i}A)^2u_j + V u_j - \frac{1}{2} (\sigma \cdot B) u_j,\label{eq:PP_Pauli_no_params_mixed}\\
    \Delta V &= -\rho_{\text{diag}} := -\sum_{j=1}^{\infty} \lambda_j |u_j|^2\label{eq:PP_Poisson_no_params_mixed} \\
    u_j(x,0) &= u_{j,I}(x) \in (L^2(\mathbb{R}^3))^2 \label{eq:PP_initial_no_params_mixed},
\end{align}
where $\{u_j\}_{j\in \mathbb{N}}$ is an orthonormal system in $(L^2(\mathbb{R}^3))^2$ and $\{\lambda\}_{j\in \mathbb{N}}$ is a sequence satisfying $\lambda_j \geq 0$ and 
\begin{equation}
   \|\lambda\|_1 :=  \sum_{j=1}^{\infty} \lambda_j =1.
\end{equation}
The global wellposedness in the energy space for the m-SH equation \eqref{eq:magnetic schrödinger hartree} for the pure state case was proven in \cite{michelangeli2015global}. The method is to establish an appropriate energy space where the magnetic Laplacian defines a self-adjoint operator. Then one shows that the Hartree nonlinearity is Lipschitz in the energy space and finally one uses energy conservation to extend the solution globally. In \cite{michelangeli2015global}, the magnetic potential $A$ is assumed to be in $L^2_{\text{loc}}(\mathbb{R}^3)$ which is sufficient for the magnetic Laplacian \eqref{eq:magnetic_laplacian} to be self-adjoint on $L^2(\mathbb{R}^3)$ due to a theorem by Leinfelder and Simader, cf. \cite{leinfelder1981schrodinger}. For the Pauli-Poisson equation we will treat the Stern-Gerlach term as a perturbation of the magnetic Laplacian as in \cite{balinsky2001zero}.

The result in \cite{michelangeli2015global} is established for pure states. Barbaroux and Vougalter proved global wellposedness for the m-SP equation \eqref{eq:MSP_Schrödinger}-\eqref{eq:MSP_data} in \cite{barbaroux2017well}  for mixed states but only for bounded magnetic fields. Global wellposedness in $H^2$ of the SP equation without magnetic field for mixed states was obtained in \cite{brezzi1991three},\cite{illner1994global} and in $L^2$ in \cite{castella1997l2}.  We will prove global wellposedness in the energy space for the Pauli-Poisson equation with rougher magnetic potential than in \cite{barbaroux2017well} for mixed states, thereby extending the results from \cite{barbaroux2017well}, \cite{michelangeli2015global}. We will use the notation
\begin{equation*}
    \mathbf{u} := \{u_j\}_{j\in \mathbb{N}}, \quad
    \mathbf{u}_{I} = \{u_{j,I}\}_{j\in \mathbb{N}}
\end{equation*}
Moreover, we have
\begin{align}
    |\mathbf{u}| = \sum_{j=1}^{\infty} \lambda_j |u_j| && |\mathbf{u}|^2 \equiv \rho_{\text{diag}} = \sum_{j=1}^{\infty} \lambda_j |u_j|^2
\end{align}
and denote the mixed state analogues to $(L^p(\mathbb{R}^3))^2$, $(H^s(\mathbb{R}^3))^2$ and $(H^1_A(\mathbb{R}^3))^2$ (similary to \cite{brezzi1991three},\cite{castella1997l2},\cite{illner1994global}) by
\begin{align*}
\mathbf{L}^p &:= \{\mathbf{u}\colon u_j \in (L^p(\mathbb{R}^3))^2 \, \forall j\in \mathbb{N}; \, \|\mathbf{u}\|^2_{p} := \sum \lambda_j \|u_j\|^2_p < \infty \}, \\
    \mathbf{H}^s &:=\{\mathbf{u}\colon u_j \in (H^s(\mathbb{R}^3))^2 \, \forall j\in \mathbb{N}; \, \|\mathbf{u}\|^2_{H^s} := \sum \lambda_j \|u_j\|^2_{H^s} < \infty \}, \\
    \mathbf{H}^1_A &:=\{\mathbf{u}\colon u_j \in (H^1_A(\mathbb{R}^3))^2 \, \forall j\in \mathbb{N}; \, \|\mathbf{u}\|^2_{H^1_A} := \sum \lambda_j \|u_j\|^2_{H^1_A} < \infty \}. 
\end{align*}
where $H^1_A$ is defined as
\begin{equation}
    H^1_A = \{f\colon \|f\|^2_{H^1_A} := \|(\nabla-iA)f\|_2^2 + \|f\|_2^2 < \infty\}
\end{equation}
for given $A$. The usual inequalities also hold for mixed state spaces, e.g. the Hölder inequality, cf. \cite{castella1997l2}:
\begin{equation*}
    \|\mathbf{u}\mathbf{v}\|_r \leq \|\mathbf{u}\|_p \|\mathbf{v}\|_q,
\end{equation*}
for $1/r = 1/p + 1/q$ and $\mathbf{u} \in \mathbf{L}^p$,$\mathbf{v}\in \mathbf{L}^q$. But also the Sobolev, Young and Hardy-Littlewood-Sobolev inequalities can be easily generalized to mixed state spaces.
When we write $\mathbf{u}\in C^{\infty}_0$ we mean that $u_j\in (C^{\infty}_0)^2$ for all $j\in \mathbb{N}$. Operators act component-wise, e.g.
\begin{equation}
    \nabla \mathbf{u} = \{\nabla u_j\}_{j\in \mathbb{N}}.
\end{equation}
We now adapt the proof from \cite{michelangeli2015global} for the Pauli-Poisson equation. We define the  energy space $\mathcal{H}^1$ for the Pauli equation and show that $|x|^{-1}\ast |\mathbf{u}|^2$ is Lipschitz in $\mathcal{H}^1$. After having obtained a local solution by a fixed point argument we extend the local solution globally using energy conservation. The following lemma is based on \cite{balinsky2001zero}.
\begin{lemma}
\label{thm:self_adjoint_perturbation}
Let $A\in L^2_{\text{\emph{loc}}}(\mathbb{R}^3)$, $|B| \in L^{2}(\mathbb{R}^3)$. Then the sesquilinear form 
\begin{equation}
    h_0(f,g) := \langle H_0 f, g \rangle, \quad f,g \in (C^{\infty}_0(\mathbb{R}^3))^2,
\end{equation}
where $H_0$ is the linear Pauli Hamiltonian defined by \eqref{eq:P_Hamiltonian} is symmetric, closable and non-negative in $(L^2(\mathbb{R}^3))^2$. The associated self-adjoint operator $H_0$ has form domain $\mathfrak{Q}(H_0)$ equal to the completion of $(C^{\infty}_0(\mathbb{R}^3))^2$ with respect to the norm $\|f\|^2_{H^1_A} := \|(\nabla-iA)f\|^2 + \|f\|^2 $.
\begin{proof}
Let $\epsilon >0$ and write $|B| = B_1 + B_2$ with $\|B_1\|_{L^{2}}<\epsilon$ and $\|B_2\|_{L^{\infty}}<C_{\epsilon}$ where $C_{\epsilon}$ is a constant depending on $\epsilon$. Then
\begin{equation*}
    \langle H_0 f,f \rangle = \langle -(\nabla-iA)^2  f,f \rangle + \langle (\sigma \cdot B) f,f \rangle.
\end{equation*}
Moreover,
\begin{align*}
    |\langle (\sigma \cdot B) f,f \rangle| &\leq \langle B_1 f,f \rangle +\langle B_1 f,f \rangle \\
    &\leq \|B_1\|_{2} \|f \|_4^2 + C_{\epsilon} \|f\|_2^2 \\
    &\leq \tilde{C}\epsilon  \|\nabla |f|\|_2^2 + C_{\epsilon} \|f\|_2^2 \\
    &\leq  \tilde{C}\epsilon\|(\nabla-iA) f\|_2^2 + C_{\epsilon} \|f\|_2^2
\end{align*}
The third inequality follows from the Sobolev inequality and the last inequality follows from the diamagnetic inequality. It is known (cf. \cite{leinfelder1981schrodinger}) that for $A\in L^2_{\text{loc}}$, $-(\nabla-iA)^2$ defines a non-negative self-adoint operator with form domain $\mathfrak{Q}(H_0)$. Hence $\sigma \cdot B$ defines a relatively form-bounded perturbation of the form associated to $-(\nabla-iA)^2$ and by the KLMN theorem on relatively form bounded perturbations of closed forms, $h_0$ defines a closable non-negative form with form domain $\mathfrak{Q}(H_0)$.
\end{proof}
\end{lemma}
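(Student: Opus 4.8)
The plan is to realise $H_0$ as an infinitesimal form perturbation of the spinorial magnetic Dirichlet form and then to close the argument with the KLMN theorem; the hypotheses $|B|\in L^2(\mathbb{R}^3)$ and $d=3$ are exactly what make the Stern--Gerlach term form-small. First I would record, via the Pauli vector identity, that for $f\in(C^\infty_0(\mathbb{R}^3))^2$
\begin{equation*}
h_0(f,f)=\tfrac12\bigl\|(\sigma\cdot(\nabla-iA))f\bigr\|_2^2=\tfrac12\|(\nabla-iA)f\|_2^2-\tfrac12\langle(\sigma\cdot B)f,f\rangle ,
\end{equation*}
the first expression making symmetry obvious and showing $h_0(f,f)\ge0$ on $C^\infty_0$ (since $\sigma\cdot(\nabla-iA)$ is formally skew-adjoint, $-(\sigma\cdot(\nabla-iA))^2\ge0$); the sign and prefactor of the Stern--Gerlach term play no role below. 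For the unperturbed piece $q_0(f,f):=\|(\nabla-iA)f\|_2^2$ I would invoke the Leinfelder--Simader theory (cf.\ \cite{leinfelder1981schrodinger}): for $A\in L^2_{\text{loc}}(\mathbb{R}^3)$ this form is closable on $C^\infty_0$, its closure is non-negative with form domain the completion $H^1_A$ of $C^\infty_0$ in the $\|\cdot\|_{H^1_A}$-norm, and the associated self-adjoint operator is a realisation of $-(\nabla-iA)^2$; this passes componentwise to $2$-spinors, giving form domain $(H^1_A(\mathbb{R}^3))^2$.

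The core estimate is that the Stern--Gerlach form $q_1(f,f):=\langle(\sigma\cdot B)f,f\rangle$ has relative form bound zero with respect to $q_0$. Pointwise $\bigl|\langle(\sigma\cdot B(x))f(x),f(x)\rangle_{\mathbb{C}^2}\bigr|\le|B(x)|\,|f(x)|^2$, because $(\sigma\cdot B)^2=|B|^2\,\mathrm{Id}$, so $|q_1(f,f)|\le\int_{\mathbb{R}^3}|B|\,|f|^2$. Given $\varepsilon>0$ I would split $|B|=B_1+B_2$ with $B_1:=|B|\,\mathbf{1}_{\{|B|>M\}}$ and $B_2:=|B|\,\mathbf{1}_{\{|B|\le M\}}$; since $|B|\in L^2(\mathbb{R}^3)$, dominated convergence gives $\|B_1\|_2<\varepsilon$ for $M$ large, while $\|B_2\|_\infty\le M=:C_\varepsilon$. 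Then $\int B_2|f|^2\le C_\varepsilon\|f\|_2^2$, and by Hölder, the three-dimensional Sobolev inequality $\||f|\|_6\le C\|\nabla|f|\|_2$, interpolation and Young's inequality,
\begin{equation*}
\int B_1|f|^2\le\|B_1\|_2\,\|f\|_4^2\le C\varepsilon\,\|f\|_2^{1/2}\|\nabla|f|\|_2^{3/2}\le C\varepsilon\bigl(\delta\|\nabla|f|\|_2^2+\delta^{-3}\|f\|_2^2\bigr).
\end{equation*}
Finally the diamagnetic inequality $\bigl|\nabla|f|\bigr|\le\bigl|(\nabla-iA)f\bigr|$ a.e.\ replaces $\|\nabla|f|\|_2^2$ by $q_0(f,f)$, so $|q_1(f,f)|\le a\,q_0(f,f)+b\|f\|_2^2$ with $a=C\varepsilon\delta$ as small as we wish and $b=b(\varepsilon,\delta)$.

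With this in hand the KLMN theorem applies: $q_0$ being closed, non-negative with form domain $(H^1_A(\mathbb{R}^3))^2$, and the Stern--Gerlach form being symmetric with relative form bound $<1$, the form $h_0$ on $(C^\infty_0(\mathbb{R}^3))^2$ is closable, its closure is closed and bounded below with the same form domain, and hence the associated self-adjoint operator $H_0$ satisfies $\mathfrak{Q}(H_0)=(H^1_A(\mathbb{R}^3))^2$, i.e.\ the completion of $(C^\infty_0(\mathbb{R}^3))^2$ in the $\|\cdot\|_{H^1_A}$-norm; the a priori lower bound is improved to $\ge0$ by passing to the limit in $h_0(f,f)\ge0$ along the core $C^\infty_0$. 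I expect the main obstacle to be the relative-form-boundedness step: one must combine the $L^2$-splitting of $B$ with the sharp Sobolev exponent available in $d=3$, and then the diamagnetic inequality, so that the bound is genuinely controlled by the \emph{magnetic} gradient $(\nabla-iA)f$ --- the plain gradient $\nabla f$ is not at one's disposal since $A$ is merely $L^2_{\text{loc}}$.
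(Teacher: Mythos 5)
Your argument is correct and follows essentially the same route as the paper: split $|B|$ into an $L^2$-small piece $B_1$ and an $L^\infty$-bounded piece $B_2$, bound the Stern–Gerlach form via H\"older, the 3d Sobolev embedding and the diamagnetic inequality so it is infinitesimally form-bounded with respect to the Leinfelder–Simader magnetic form, and close with the KLMN theorem. Your version is in fact a bit more explicit than the paper's (you spell out the indicator-function decomposition of $|B|$ and the interpolation/Young step behind the $\|f\|_4^2$ bound, which the paper compresses into a single ``Sobolev inequality'' citation), but the underlying decomposition and key lemmas are identical.
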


\begin{definition}[Energy space]
\label{def:energy_space}
Let $A\in L^2_{\text{{loc}}}(\mathbb{R}^3)$, $|B| \in L^{2}(\mathbb{R}^3)$. The energy space for the Pauli equation with magnetic potential $A$ and magnetic field $B$ is given by
\begin{equation*}
    \mathcal{H}^1(\mathbb{R}^3) := \{\mathbf{u}\in \mathbf{L}^2 \colon (\nabla-iA)\mathbf{u} \in \mathbf{L}^2, (\sigma \cdot B)_+^{1/2}\mathbf{u} \in \mathbf{L}^2\} 
\end{equation*}
with associated norm
\begin{equation*}
    \|\mathbf{u}\|^2_{\mathcal{H}^1} :=  \|(\nabla-iA)\mathbf{u}\|^2_2 + \|(\sigma \cdot B)_+^{1/2}\mathbf{u}\|^2_2 + \|\mathbf{u}\|_2^2.
\end{equation*}
\end{definition}

\begin{lemma}[Properties of $\mathcal{H}^1(\mathbb{R}^3)$ and $H_0$] 
Let $A\in L^2_{\text{\emph{loc}}}(\mathbb{R}^3)$, $|B| \in L^{2}(\mathbb{R}^3)$. Then $\mathcal{H}^1(\mathbb{R}^3)$ is a Hilbert space, complete with respect to $\|\cdot\|_{\mathcal{H}^1}$. The norm $\|\cdot\|_{\mathcal{H}^1}$ is equivalent to $\|\cdot\|_{\mathbf{H}^1_A}$. Moreover, $H_0$ is unitary on $\mathcal{H}^1(\mathbb{R}^3)$, i.e. 
\begin{equation*}
    \|e^{-itH_0}\mathbf{u}\|_{\mathcal{H}^1} = \|\mathbf{u}\|_{\mathcal{H}^1},
\end{equation*} for all $f\in \mathcal{H}^1$ and for all $t\in \mathbb{R}$.
\begin{proof}
The norms are equivalent because
\begin{align*}
    \|\mathbf{u}\|_{\mathcal{H}^1}^2 &\leq \|\nabla_A \mathbf{u}\|_2^2 + \|(\sigma\cdot B)^{1/2} \mathbf{u}\|_2^2 + \|\mathbf{u}\|^2_2 \\
    &\lesssim (1+a)\|\nabla_A \mathbf{u}\|_2^2 + (1+b) \|\mathbf{u}\|_2^2
\end{align*}
where $a\in(0,1)$ and $b\geq 0$ are the constants from Lemma \ref{thm:self_adjoint_perturbation} and
\begin{align*}
    \|\mathbf{u}\|_{H^1_A}^2 &\leq \|\nabla_A \mathbf{u}\|_2^2  + \|\mathbf{u}\|^2_2 \\
    &\leq  \|\nabla_A \mathbf{u}\|_2^2 + \|(\sigma\cdot B)^{1/2} \mathbf{u}\|_2^2 + \|\mathbf{u}\|^2_2.
\end{align*}
Unitarity follows from the fact that $H_0$ is self-adjoint on $\mathcal{H}^1(\mathbb{R}^3)$.
\end{proof}
\end{lemma}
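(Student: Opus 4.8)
The plan is to deduce all three assertions from (i) the relative form bound for the Stern--Gerlach term established in Lemma~\ref{thm:self_adjoint_perturbation} and (ii) standard facts about the self-adjoint realisation of $H_0$ it produces. For the equivalence of $\|\cdot\|_{\mathcal{H}^1}$ and $\|\cdot\|_{\mathbf{H}^1_A}$, the inequality $\|\cdot\|_{\mathbf{H}^1_A}\le\|\cdot\|_{\mathcal{H}^1}$ is immediate, since the $\mathcal{H}^1$-norm has the extra nonnegative summand $\|(\sigma\cdot B)_+^{1/2}\mathbf{u}\|_2^2$; the work is the reverse bound. For each $x$ the Hermitian matrix $\sigma\cdot B(x)$ has eigenvalues $\pm|B(x)|$, whence $0\le(\sigma\cdot B)_+\le|B|\,\mathrm{Id}$ and $\|(\sigma\cdot B)_+^{1/2}\mathbf{u}\|_2^2\le\int|B|\,|\mathbf{u}|^2$; the splitting $|B|=B_1+B_2$ with $\|B_1\|_2<\epsilon$, $\|B_2\|_\infty<C_\epsilon$, together with the Sobolev and diamagnetic inequalities exactly as in the proof of Lemma~\ref{thm:self_adjoint_perturbation}, bounds this by $\tilde C\epsilon\|(\nabla-iA)\mathbf{u}\|_2^2+C_\epsilon\|\mathbf{u}\|_2^2$, and one fixes $\epsilon$ so that $\tilde C\epsilon<1$. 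All the estimates are summed against the nonnegative weights $\lambda_j$, so they pass verbatim to the mixed-state spaces.

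Completeness then follows because $\mathcal{H}^1$ and $\mathbf{H}^1_A$ coincide as sets and carry equivalent norms, and $\mathbf{H}^1_A$ is complete: it is the weighted $\ell^2$-direct sum of copies of the ordinary magnetic Sobolev space $(H^1_A(\mathbb{R}^3))^2$, and the latter is complete because for $A\in L^2_{\mathrm{loc}}$ the operator $f\mapsto(\nabla-iA)f$ is closed on $L^2$ (if $f_n\to f$ and $(\nabla-iA)f_n\to g$ in $L^2$ then $Af_n\to Af$ in $L^1_{\mathrm{loc}}$ and $\nabla f_n\to\nabla f$ in $\mathcal{D}'$, forcing $g=(\nabla-iA)f$). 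Equipping $\mathcal{H}^1$ with the inner product obtained by polarising $\|\cdot\|_{\mathcal{H}^1}^2$ then makes it a Hilbert space.

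For the ``unitarity'' of $H_0$ I would use that, by Lemma~\ref{thm:self_adjoint_perturbation}, $h_0$ is a closed nonnegative form, hence represents a self-adjoint operator $H_0\ge0$ on $(L^2)^2$ with form domain $\mathfrak{Q}(H_0)=D((H_0+1)^{1/2})$ equal as a set to $\mathcal{H}^1$, and whose form norm $\langle(H_0+1)\mathbf{u},\mathbf{u}\rangle^{1/2}=\big(\|(\nabla-iA)\mathbf{u}\|_2^2+\langle(\sigma\cdot B)\mathbf{u},\mathbf{u}\rangle+\|\mathbf{u}\|_2^2\big)^{1/2}$ is equivalent to $\|\cdot\|_{\mathcal{H}^1}$ — the two-sided estimate again being a rearrangement of the relative form bound $|\langle(\sigma\cdot B)\mathbf{u},\mathbf{u}\rangle|\le a\|(\nabla-iA)\mathbf{u}\|_2^2+b\|\mathbf{u}\|_2^2$ with $a<1$, combined with $\|\mathbf{u}\|_2^2\le\langle(H_0+1)\mathbf{u},\mathbf{u}\rangle$. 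By Stone's theorem $e^{-itH_0}$ is a strongly continuous unitary group on $(L^2)^2$ that, by the Borel functional calculus, commutes with $(H_0+1)^{1/2}$; hence $\|(H_0+1)^{1/2}e^{-itH_0}\mathbf{u}\|_2=\|(H_0+1)^{1/2}\mathbf{u}\|_2$ for all $t$, so $e^{-itH_0}$ is a bounded automorphism of $\mathcal{H}^1$ with operator norm bounded uniformly in $t$ — which is what the subsequent fixed-point and global-continuation arguments actually use — and an exact isometry if $\mathcal{H}^1$ is given the form norm.

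The only genuinely delicate point is this last equivalence: because $\sigma\cdot B$ is sign-indefinite, the conserved quadratic quantity involves $\langle(\sigma\cdot B)\mathbf{u},\mathbf{u}\rangle$ rather than $\|(\sigma\cdot B)_+^{1/2}\mathbf{u}\|_2^2$, so one really does need the relative form bound with constant strictly less than $1$ (hence the hypothesis $|B|\in L^2$) in order to recover $\|(\nabla-iA)\mathbf{u}\|_2^2$ from the form norm; once that is secured the remaining steps are routine.
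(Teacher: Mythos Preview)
Your argument is correct and follows the same route as the paper: the norm equivalence comes from the relative form bound of Lemma~\ref{thm:self_adjoint_perturbation}, and the invariance of $\mathcal{H}^1$ under $e^{-itH_0}$ from self-adjointness of $H_0$. You are in fact more careful than the paper --- supplying the completeness argument it omits and correctly noting that exact isometry holds only in the equivalent form norm $\langle(H_0+1)\cdot,\cdot\rangle^{1/2}$, whereas in $\|\cdot\|_{\mathcal{H}^1}$ as defined one gets a uniformly bounded automorphism, which is all the subsequent fixed-point argument needs.
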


\begin{lemma}[Local Lipschitz property]\label{thm:lipschitz} Let $A\in L^2_{\text{\emph{loc}}}(\mathbb{R}^3)$, $|B| \in L^2(\mathbb{R}^3)$. Moreover, let $\mathcal{H}^1(\mathbb{R}^3)$ be the energy space from Definition \ref{def:energy_space}. Then for any $\mathbf{u},\mathbf{v} \in \mathcal{H}^1(\mathbb{R}^3)$, the nonlinearity $(|x|^{-1}\ast|\mathbf{u}|^2)\mathbf{u}$ is locally Lipschitz, i.e.
\begin{equation}
    \|(\frac{1}{|x|}\ast |\mathbf{u}|^2)\mathbf{u}-(\frac{1}{|x|}\ast |\mathbf{v}|^2)\mathbf{v}\|_{\mathcal{H}^1} \leq C(\|\mathbf{u}\|^2_{\mathcal{H}^1}+\|\mathbf{v}\|^2_{\mathcal{H}^1})\|\mathbf{u}-\mathbf{v}\|_{\mathcal{H}^1},
    \label{eq:lipschitz}
\end{equation}
where $C$ is independent of $\mathbf{u}$ and $\mathbf{v}$.
\begin{proof}
Assume that $\mathbf{u},\mathbf{v}\in C^{\infty}_0$ first. Rewrite the magnetic derivative part of the $\mathcal{H}^1$ norm as follows:
\begin{align}
\begin{split}
    \|\nabla_A (\frac{1}{|x|}\ast |\mathbf{u}|^2)\mathbf{u}-(\frac{1}{|x|}\ast |\mathbf{v}|^2)\mathbf{v}\|_2 &\leq \| (\frac{1}{|x|}\ast \nabla (|\mathbf{u}|^2-|\mathbf{v}|^2))\mathbf{u}\|_2 \\ 
    &+ \| (\frac{1}{|x|}\ast \nabla |\mathbf{v}|^2)(\mathbf{u}-\mathbf{v})\|_2 \\
    &+ \|(\frac{1}{|x|}\ast (|\mathbf{u}|^2-|\mathbf{v}|^2))\nabla_A\mathbf{u}\|_2 \\ 
    &+ \| (\frac{1}{|x|}\ast |\mathbf{v}|^2)\nabla_A(\mathbf{u}-\mathbf{v})\|_2,
    \label{eq:lipschitz_split}
\end{split}
\end{align}
since $\nabla_A (fg) = f\nabla_A g + (\nabla f)g$.    Notice that
\begin{equation}
\label{eq:lipschitz_aux_1}
    \left\| (\frac{1}{|x|}\ast (fh))g \right\|_2 \lesssim \|fh\|_{p}^2 \|g\|_{q},
\end{equation}
where $2/p+1/q = 7/6$ due to the Hölder and Hardy-Littlewood-Sobolev inequalities. Choosing $p=2 $ and $q=6$ and using the Sobolev and diamagnetic  inequalities yields
\begin{equation}
\label{eq:lipschitz_aux_2}
    \left\| (\frac{1}{|x|}\ast (fh))g \right\|_2 \lesssim \|f\|_2\|h\|_2 \|\nabla_Ag\|_2.
\end{equation}
It follows that:
\begin{equation}
    \left\| (\frac{1}{|x|}\ast \nabla|f|^2)g \right\|_2 \lesssim \||f|\|_2 \|\nabla |f|\|_2 \|g\|_6 \lesssim \|f\|_2 \|\nabla_Af\|_2\|\nabla_A g\|_2
\end{equation}
We now apply theses inequality to the terms in \eqref{eq:lipschitz_split}. The first term yields
\begin{align}
\begin{split}
    \| (\frac{1}{|x|}\ast \nabla (|\mathbf{u}|^2-|\mathbf{v}|^2))\mathbf{u}\|_2  &\lesssim \|\mathbf{u}\|_2 \|\nabla_A(\mathbf{u}-\mathbf{v})\|_2 \|\nabla_A \mathbf{u}\|_2 \\
    &+\|\mathbf{u}-\mathbf{v}\|_2\|\nabla_A \mathbf{v}\|_2 \|\nabla_A \mathbf{u}\|_2.
    \end{split}
    \label{eq:lipschitz_part1}
\end{align}
For the second term we obtain similarly,
\begin{align}
    \| (\frac{1}{|x|}\ast \nabla |\mathbf{v}|^2)(\mathbf{u}-\mathbf{v})\|_2 \leq \| \mathbf{v}\|_2 \|\nabla_A \mathbf{v}\|_2 \|\nabla_A(\mathbf{u}-\mathbf{v})\|_2.
    \label{eq:lipschitz_part2}
\end{align}
For the third term we have using Young's inequality and the fact that $|x|^{-1} \in L^{3/2}+L^{\infty}$
\begin{align}
\begin{split}
    \|(\frac{1}{|x|}\ast (|\mathbf{u}|^2-|\mathbf{v}|^2))\nabla_A\mathbf{u}\|_2 \lesssim&  \||\mathbf{u}|^2-|\mathbf{v}|^2\|_3 \|\nabla_A \mathbf{u}\|_2 + \||\mathbf{u}|^2-|\mathbf{v}|^2\|_1 \|\nabla_A\mathbf{u}\|_2 \\
    \leq& \|\mathbf{u}-\mathbf{v}\|_6(\|\mathbf{u}\|_6 + \|\mathbf{v}\|_6)\|\nabla_A\mathbf{u}\|_2 \\
    &+ \|\mathbf{u}-\mathbf{v}\|_2(\|\mathbf{u}\|_2 + \|\mathbf{v}\|_2)\|\nabla_A\mathbf{u}\|_2 \\
    \leq& \|\nabla_A(\mathbf{u}-\mathbf{v})\|_2(\|\nabla_A\mathbf{u}\|_2 + \|\nabla_A\mathbf{v}\|_2)\|\nabla_A\mathbf{u}\|_2 \\
    &+ \|\mathbf{u}-\mathbf{v}\|_2(\|\mathbf{u}\|_2 + \|\mathbf{v}\|_2)\|\nabla_A\mathbf{u}\|_2.
    \end{split}
    \label{eq:lipschitz_part3}
\end{align}
For the fourth term we argue similarly and obtain
\begin{align}
\begin{split}
    \| (\frac{1}{|x|}\ast |\mathbf{v}|^2)\nabla_A(\mathbf{u}-\mathbf{v})\|_2 \lesssim \|\nabla_A \mathbf{v}\|_2\|\nabla_A(\mathbf{u}-\mathbf{v})\|_2 + \| \mathbf{v}\|_2\|\nabla_A(\mathbf{u}-\mathbf{v})\|_2.
    \end{split}
    \label{eq:lipschitz_part4}
\end{align}
Finally, we have the spin term which is estimated in the same way as the third term above.
\begin{align}
\begin{split}
    \|(\frac{1}{|x|}\ast (|\mathbf{u}|^2-|\mathbf{v}|^2))(\sigma \cdot B)\mathbf{u}\|_2 \lesssim& \|\nabla_A(\mathbf{u}-\mathbf{v})\|_2(\|\nabla_A\mathbf{u}\|_2 + \|\nabla_A\mathbf{v}\|_2)\|(\sigma \cdot B)\mathbf{u}\|_2 \\
    &+ \|\mathbf{u}-\mathbf{v}\|_2(\|\mathbf{u}\|_2 + \|\mathbf{v}\|_2)\|(\sigma \cdot B)\mathbf{u}\|_2.
    \end{split}
    \label{eq:lipschitz_part5}
\end{align}
From \eqref{eq:lipschitz_part1}-\eqref{eq:lipschitz_part5} we deduce \eqref{eq:lipschitz} for $\mathbf{u},\mathbf{v}\in C^{\infty}_0$. It remains to lift the restriction. Consider two sequences $\{\mathbf{u}_n\},\{\mathbf{v}_n\} \subset C^{\infty}_0$ converging to $\mathbf{u},\mathbf{v}$ in $\mathcal{H}^1$ respectively. It is then easy to see that due to the Lipschitz property of the nonlinearity, $(|x|^{-1}\ast|\mathbf{u}_n|^2)\mathbf{u}_n$ and $(|x|^{-1}\ast|\mathbf{v}_n|^2)\mathbf{v}_n$ are Cauchy sequences in $\mathcal{H}^1$ with respective limits $\mathbf{U}$ and $\mathbf{V}$. Since $\|f\|_2 \leq C \|f\|_{\mathcal{H}^1}$ the convergence is also in $L^2$ and we can deduce that the convergence to $\mathbf{U}$ and $\mathbf{V}$ is pointwise a.e. Then one concludes that $\mathbf{U} = (|x|^{-1}\ast|\mathbf{u}|^2)\mathbf{u}$ and similarly for $\mathbf{V}$. Taking limits allows to conclude that the Lipschitz property holds for all $\mathbf{u},\mathbf{v}\in \mathcal{H}^1$.
\end{proof}
\end{lemma}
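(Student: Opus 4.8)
The plan is to reduce to smooth compactly supported mixed states, prove the bound there, and then pass to the limit; the estimate itself is carried out separately in each of the three constituents of the energy norm, $\|\cdot\|_{\mathcal H^1}^2 = \|\nabla_A\cdot\|_2^2 + \|(\sigma\cdot B)_+^{1/2}\cdot\|_2^2 + \|\cdot\|_2^2$ with $\nabla_A := \nabla - iA$. By Lemma~\ref{thm:self_adjoint_perturbation}, $\mathcal H^1(\mathbb R^3)$ is the completion of $C_0^\infty$ in this norm, so it suffices to establish \eqref{eq:lipschitz} for $\mathbf u,\mathbf v\in C_0^\infty$ and then take limits along approximating sequences $\mathbf u_n,\mathbf v_n\to\mathbf u,\mathbf v$ in $\mathcal H^1$: the Lipschitz bound itself shows that $(|x|^{-1}\ast|\mathbf u_n|^2)\mathbf u_n$ and $(|x|^{-1}\ast|\mathbf v_n|^2)\mathbf v_n$ are Cauchy in $\mathcal H^1$, and since $\|\cdot\|_2\lesssim\|\cdot\|_{\mathcal H^1}$ the convergence is also in $\mathbf L^2$, hence a.e.\ along a subsequence, which identifies the limits with $(|x|^{-1}\ast|\mathbf u|^2)\mathbf u$ and $(|x|^{-1}\ast|\mathbf v|^2)\mathbf v$. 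This density step follows the template of \cite{michelangeli2015global}.

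The analytic core is a pair of convolution estimates. Using Hardy--Littlewood--Sobolev for the kernel $|x|^{-1}$ on $\mathbb R^3$ followed by H\"older, $\|(|x|^{-1}\ast(fh))g\|_2\lesssim\|f\|_p\|h\|_p\|g\|_q$ whenever $2/p+1/q=7/6$; taking $p=2$, $q=6$ and then the Sobolev embedding $\|g\|_6\lesssim\|\nabla|g|\|_2$ together with the diamagnetic inequality $\|\nabla|g|\|_2\le\|\nabla_A g\|_2$ turns this into $\|(|x|^{-1}\ast(fh))g\|_2\lesssim\|f\|_2\|h\|_2\|\nabla_A g\|_2$. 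Separately, since $|x|^{-1}\in L^{3/2}+L^\infty$, Young's inequality gives $\|(|x|^{-1}\ast w)g\|_2\le\|\,|x|^{-1}\ast w\,\|_\infty\|g\|_2\lesssim(\|w\|_3+\|w\|_1)\|g\|_2$. The algebraic inputs are the pointwise inequalities $\big|\,|\mathbf u|^2-|\mathbf v|^2\,\big|\lesssim|\mathbf u-\mathbf v|(|\mathbf u|+|\mathbf v|)$ and, writing $\nabla|w|^2=2\,\Re(\overline w\,\nabla_A w)$ and telescoping $\overline{u_j}\nabla_A u_j-\overline{v_j}\nabla_A v_j=\overline{u_j}\nabla_A(u_j-v_j)+\overline{(u_j-v_j)}\nabla_A v_j$ before summing against $\lambda_j$, the bound $\big|\nabla(|\mathbf u|^2-|\mathbf v|^2)\big|\lesssim|\mathbf u|\,|\nabla_A(\mathbf u-\mathbf v)|+|\mathbf u-\mathbf v|\,|\nabla_A\mathbf v|$. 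The point of the last rewriting is that no bare $\nabla$ survives, so the diamagnetic inequality is applicable throughout; the sums over $j$ close because the mixed-state norms $\mathbf L^p$ carry the weights $\lambda_j\in[0,1]$ and Cauchy--Schwarz in $j$ is available.

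With these in hand one estimates $D:=(|x|^{-1}\ast|\mathbf u|^2)\mathbf u-(|x|^{-1}\ast|\mathbf v|^2)\mathbf v$, using that the Hartree factor is a real scalar and thus commutes with $\sigma\cdot B$. Splitting $D=(|x|^{-1}\ast(|\mathbf u|^2-|\mathbf v|^2))\mathbf u+(|x|^{-1}\ast|\mathbf v|^2)(\mathbf u-\mathbf v)$, the pieces $\|D\|_2$ and $\|(\sigma\cdot B)_+^{1/2}D\|_2$ come from the Young-type bound with $g$ equal to $\mathbf u$, $\mathbf u-\mathbf v$, $(\sigma\cdot B)_+^{1/2}\mathbf u$ or $(\sigma\cdot B)_+^{1/2}(\mathbf u-\mathbf v)$, the arising $L^3$/$L^1$ norms of $|\mathbf u|^2-|\mathbf v|^2$ and $|\mathbf v|^2$ being converted by H\"older together with Sobolev and the diamagnetic inequality into products of $\|\cdot\|_2$ and $\|\nabla_A\cdot\|_2$, hence into $\|\mathbf u\|_{\mathcal H^1}^2+\|\mathbf v\|_{\mathcal H^1}^2$. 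For $\|\nabla_A D\|_2$ one applies the Leibniz rule $\nabla_A(fg)=f\nabla_A g+(\nabla f)g$ to reach a four-term sum: the two terms with the derivative on the Hartree potential carry $\nabla(|\mathbf u|^2-|\mathbf v|^2)$ resp.\ $\nabla|\mathbf v|^2$ and are handled by the Hardy--Littlewood--Sobolev-type bound fed with the pointwise gradient estimate above, while the two terms with the derivative on the spinor are of exactly the shape of the $\|D\|_2$ estimate with $\nabla_A\mathbf u$ resp.\ $\nabla_A(\mathbf u-\mathbf v)$ playing the role of $g$. Summing the resulting contributions yields \eqref{eq:lipschitz}.

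I expect the step needing most care to be the two gradient-on-the-potential terms: one must push $\nabla$ onto $|\mathbf u|^2-|\mathbf v|^2$, re-express it as a telescoped sum of $L^1$-type products so that only $\nabla_A$ appears, and then choose the Hardy--Littlewood--Sobolev/H\"older exponents so that $|x|^{-1}\ast(\cdot)$ lands in $L^3$ paired against an $L^6$-norm controlled by $\|\nabla_A\cdot\|_2$ — which really forces $p$ slightly above $2$ to stay off the Hardy--Littlewood--Sobolev endpoint, a harmless adjustment because $\mathcal H^1\hookrightarrow\mathbf L^2\cap\mathbf L^6$ supplies all intermediate $\mathbf L^q$-bounds by interpolation. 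Everything else — the weighted $j$-summations, the commutation of the scalar Hartree factor past $\sigma\cdot B$, and the density argument — is bookkeeping along the lines of \cite{michelangeli2015global}.
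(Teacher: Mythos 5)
Your proposal is correct and follows essentially the same approach as the paper: the same Leibniz split of $\nabla_A$ on the product, the same combination of Hardy--Littlewood--Sobolev/H\"older, Sobolev, and diamagnetic inequalities to close the estimate, the same treatment of the spin term, and the same density argument to lift the $C^\infty_0$ restriction. You also flag a subtlety that the paper's presentation glosses over — choosing $p=2$ puts the Hardy--Littlewood--Sobolev step at its $L^1$ endpoint — and your remedy of taking $p$ slightly above $2$ (with the intermediate $L^q$-norms supplied by interpolation in $\mathcal H^1\hookrightarrow\mathbf L^2\cap\mathbf L^6$) is the right fix and leaves the stated conclusion unchanged.
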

Define the Banach space
\begin{equation}
    X_T = \{ \mathbf{u} \in C(I,\mathcal{H}^1(\mathbb{R}^3)) \colon \|\mathbf{u}\|_{L^{\infty}(I,\mathcal{H}^1(\mathbb{R}^3))} \leq M; \, \mathbf{u}(0,x) = \mathbf{u}_I(x) \}
\end{equation}
with the norm
\begin{equation}
    \|\mathbf{u}\|_{X_T} := \|\mathbf{u}\|_{L^{\infty}(I,\mathcal{H}^1(\mathbb{R}^3))} = \sup_{|t|\leq T} (\|\mathbf{u}\|_2 + \|H_0 \mathbf{u}\|_2).
\end{equation}
For every $\mathbf{u}\in X_T$ and every $t\in I$ we introduce the map $\mathbf{u}\mapsto \Phi(\mathbf{u})$ defined by
\begin{equation}
    \Phi(\mathbf{u}) := e^{itH_0}\mathbf{u}_0 - i \int_0^t e^{i(t-s)H_0} (\frac{1}{|x|}\ast |\mathbf{u}(s)|^2)\mathbf{u}(s) \dd s.
\end{equation}
We will show that $\Phi$ is a contraction in $X_T$ and conclude by a fixed point argument.
\begin{lemma}[Local wellposedness] Let $A\in L^2_{\text{\emph{loc}}}(\mathbb{R}^3)$, $|B| \in L^{2}(\mathbb{R}^3)$. Let $\mathbf{u}_I \in \mathcal{H}^1(\mathbb{R}^3)$. For every $M>0$ there is a time $T(M)>0$ such that for all $\mathbf{u}_I$ with $\|\mathbf{u}_I\|_{\mathcal{H}^1} \leq M$ there is a unique $\mathbf{u} \in C([-T(M),T(M)],\mathcal{H}^1(\mathbb{R}^3))$ solving \eqref{eq:PP_Pauli_no_params_mixed}-\eqref{eq:PP_initial_no_params_mixed}. In fact, there exist $T_{\text{\emph{min}}}, T_{\text{\emph{max}}}>0$ such that there is a unique solution $\mathbf{u}\in C([-T_{\text{\emph{min}}},T_{\text{\emph{max}}}],\mathcal{H}^1(\mathbb{R}^3))\cap C^1([-T_{\text{\emph{min}}},T_{\text{\emph{max}}}],\mathcal{H}^{1}(\mathbb{R}^3)^*)$. Moreover, the blow-up alternative holds: If $T_{\text{\emph{max}}}< \infty$ then $\|\mathbf{u}(t)\|_{\mathcal{H}^1}\rightarrow \infty$ as $t\uparrow T_{\text{\emph{max}}}$ (resp. as $t\downarrow -T_{\text{\emph{min}}}$ if $T_{\text{\emph{min}}}<\infty$).
\begin{proof}
Let us show that $\Phi$ maps $X_T$ into $X_T$ for the right choices of $T=T(\bar{M})$ and $\bar{M}$. Observe that for $t \in [0,T(\bar{M}))$
\begin{align*}
    \|\Phi(\mathbf{u})\|_{\mathcal{H}^1} &\leq \|e^{itH_0}\mathbf{u}_I\|_{\mathcal{H}^1} + \int_0^t \|e^{i(t-s)H_0} (\frac{1}{|x|}\ast |\mathbf{u}(s)|^2)\mathbf{u}(s)\|_{\mathcal{H}^1} \dd s \\
    &= \|\mathbf{u}_I\|_{\mathcal{H}^1} + \int_0^t \|(\frac{1}{|x|}\ast |\mathbf{u}(s)|^2)\mathbf{u}(s)\|_{\mathcal{H}^1} \dd s \\
    &\lesssim \|\mathbf{u}_I\|_{\mathcal{H}^1} + \|\mathbf{u}\|_{X_T}^3 T(\bar{M}) \\
    &\leq \bar{M} + (2\bar{M})^3 T(\bar{M}).
\end{align*}
where we have used unitarity of $e^{itH_0}$ on $\mathcal{H}^1$ and the local Lipschitz property. Finally, choosing $T(\bar{M}) \simeq 1/(16\bar{M}^2)$ yields that
\begin{equation*}
    \|\Phi(\mathbf{u})\|_{\mathcal{H}^1} \lesssim 2\bar{M},
\end{equation*}
and thus $\Phi$ maps $X_T$ into $X_T$ for $M=2\bar{M}$. Now we show that $\Phi(\mathbf{u})$ is a contraction on $X_T$. Given $\mathbf{u},\mathbf{v} \in X_T$ and $t\in [0,T(M))$, 
\begin{align*}
    \|\Phi(\mathbf{u})-\Phi(\mathbf{v})\|_{\mathcal{H}^1} &\lesssim \int_0^t \|(\frac{1}{|x|}\ast |\mathbf{u}(s)|^2)\mathbf{u}(s)-(\frac{1}{|x|}\ast |\mathbf{v}(s)|^2)\mathbf{v}(s)\|_{\mathcal{H}^1} \dd s \\
    &\lesssim \int_0^t (\|\mathbf{u}(s)\|^2_{\mathcal{H}^1}+\|\mathbf{v}(s)\|^2_{\mathcal{H}^1})\|\mathbf{u}(s)-\mathbf{v}(s)\|_{\mathcal{H}^1} \dd s \\
    &\leq T(\bar{M}) (\|\mathbf{u}\|_{X_T}^2+\|\mathbf{v}\|_{X_T}^2)\|\mathbf{u}-\mathbf{v}\|_{X_T} \\
    &\leq 8 \bar{M}^2 T(\bar{M})\|\mathbf{u}-\mathbf{v}\|_{X_T} \\
    &= \frac{1}{2}\|\mathbf{u}-\mathbf{v}\|_{X_T}.
\end{align*}
A symmetric argument for $t\in(-T(M),0]$ yields the claim. Finally, we have to check the maximality of the time interval. Define
\begin{align*}
    &T_{\text{max}} := \sup\{T>0 \colon \exists \text{ solution } \mathbf{u} \in C([0,T_{\text{max}}),\mathcal{H}^1(\mathbb{R}^3)) \\
    &T_{\text{min}} := \sup\{T>0 \colon \exists \text{ solution } \mathbf{u} \in C((-T_{\text{min}},0],\mathcal{H}^1(\mathbb{R}^3)).
\end{align*}
By definition, for any interval $J =[-T',T] \subseteq (-T_{\text{min}},T_{\text{max}})$ there is a unique solution $\mathbf{u} \in C(J,\mathcal{H}^1)\cap C^1(J,\mathcal{H}^{1*})$. The uniqueness follows since for any two $u,v \in C(J,\mathcal{H}^1)$ we have by the Duhamel formula, the unitarity of $e^{itH_0}$ and the local Lipschitz property of the nonlinearity,
\begin{align*}
    \|\mathbf{u}(t)-\mathbf{v}(t)\|_{\mathcal{H}^1} &\leq \int_0^t \|(\frac{1}{|x|}\ast |\mathbf{u}(s)|^2)\mathbf{u}(s)-(\frac{1}{|x|}\ast |\mathbf{v}(s)|^2)\mathbf{v}(s)\|_{\mathcal{H}^1} \dd s \\
    &\lesssim (\|\mathbf{u}\|^2_{L^{\infty}(I,\mathcal{H})}+\|\mathbf{v}\|^2_{L^{\infty}(I,\mathcal{H})})\int_0^t \|\mathbf{u}(s)-\mathbf{v}(s)\|_{\mathcal{H}^1} \dd s.
\end{align*}
Applying Gronwall's inequality we can deduce that $\mathbf{u}(t)=\mathbf{v}(t)$ on $[0,T]$ and by an analoguous argument on $[-T',0]$ as well. Finally we present the argument for the blow-up alternative for $T_{\text{max}}$ (the case for $T_{\text{min}}$ follows analoguously). If $T_{\text{max}}<\infty$ assume there is a $M>0$ and a sequence $\{t_n\}\subset \mathbb{R}$ such that $t_n \uparrow T_{\text{max}}$ but $\|\mathbf{u}\|_{\mathcal{H}^1}\leq M$ for all $n$. We can deduce from the wellposedness theory that there is a solution whose initial datum has $\mathcal{H}^1$-norm below $M$ with local existence time $T(M)$. Then we can construct a solution $\mathbf{v} \in C((-T(M),T(M)),\mathcal{H}^1)$ with initial datum $\mathbf{u}(t_k)$ with $t_k$ such that $t_k+T(M) > T_{\text{max}}$. Setting $\tilde{\mathbf{v}}(t) = \mathbf{v}(t-t_k)$ we see that $\tilde{\mathbf{v}}\in C ((t_k-T(M),t_k+T(M)),\mathcal{H}^1)$ is a solution. Since it coincides with $\mathbf{u}$ at $t=t_k$ we can construct a solution with initial datum $\mathbf{u}_I$ in the space $C((-T_{\text{min}},t_k+T(M)),\mathcal{H}^1)$ which is a contradiction to the maximality of $T_{\text{max}}$. The claim follows.
\end{proof}
\end{lemma}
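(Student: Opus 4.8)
The plan is to solve the fixed–point problem $\Phi(\mathbf u)=\mathbf u$ in the complete metric space $X_T$ by the Banach contraction principle, and then bootstrap to a maximal solution. The two analytic inputs are already available: by Lemma~\ref{thm:self_adjoint_perturbation} and the subsequent properties lemma, $H_0$ is self-adjoint on $\mathcal H^1(\mathbb R^3)$ with $\|e^{itH_0}\mathbf u\|_{\mathcal H^1}=\|\mathbf u\|_{\mathcal H^1}$, so $\Phi$ is well defined on $X_T$; and Lemma~\ref{thm:lipschitz} gives the local Lipschitz estimate \eqref{eq:lipschitz}. First I would show $\Phi$ maps $X_T$ into itself: taking $\mathbf v=0$ in \eqref{eq:lipschitz} yields the cubic bound $\|(\tfrac1{|x|}\ast|\mathbf u(s)|^2)\mathbf u(s)\|_{\mathcal H^1}\lesssim\|\mathbf u(s)\|_{\mathcal H^1}^3$, whence $\|\Phi(\mathbf u)\|_{\mathcal H^1}\le\bar M+C(2\bar M)^3T(\bar M)$, and choosing $T(\bar M)\simeq(16\bar M^2)^{-1}$ keeps $\|\Phi(\mathbf u)\|_{X_T}\le 2\bar M$. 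Next I would show $\Phi$ is a contraction: applying \eqref{eq:lipschitz} along $\mathbf u,\mathbf v\in X_T$ and integrating in time gives $\|\Phi(\mathbf u)-\Phi(\mathbf v)\|_{X_T}\le CT(\bar M)(\|\mathbf u\|_{X_T}^2+\|\mathbf v\|_{X_T}^2)\|\mathbf u-\mathbf v\|_{X_T}\le\tfrac12\|\mathbf u-\mathbf v\|_{X_T}$ after possibly shrinking $T$. The contraction principle then produces a unique $\mathbf u\in C([-T,T],\mathcal H^1)$ with $T=T(M)$ depending only on an upper bound $M$ for $\|\mathbf u_I\|_{\mathcal H^1}$, i.e. a mild solution of \eqref{eq:PP_Pauli_no_params_mixed}--\eqref{eq:PP_initial_no_params_mixed}.

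The second step is to upgrade this mild solution to a strong one. Since the Hartree nonlinearity maps $\mathcal H^1$ continuously into $\mathcal H^1\hookrightarrow\mathcal H^{1*}$, the map $s\mapsto(\tfrac1{|x|}\ast|\mathbf u(s)|^2)\mathbf u(s)$ is continuous in $\mathcal H^{1*}$ along the fixed point; standard semigroup regularity (differentiating the Duhamel formula, using that $H_0$ generates a unitary group on $\mathcal H^{1*}$) then gives $\mathbf u\in C^1([-T,T],\mathcal H^{1*})$ solving the equation pointwise in $\mathcal H^{1*}$.

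Next I would construct the maximal interval. Define $T_{\max}$ (resp.\ $T_{\min}$) as the supremum of forward (resp.\ backward) existence times; iterating the local result by restarting from $\mathbf u(t_0)$ with a new existence time governed by $\|\mathbf u(t_0)\|_{\mathcal H^1}$, and using uniqueness to match the pieces, one glues the local solutions into a single $\mathbf u\in C((-T_{\min},T_{\max}),\mathcal H^1)\cap C^1((-T_{\min},T_{\max}),\mathcal H^{1*})$. Uniqueness on any closed subinterval $J$ follows from Duhamel, unitarity of $e^{itH_0}$, the Lipschitz estimate and Gronwall: two solutions agreeing at $t=0$ satisfy $\|\mathbf u(t)-\mathbf v(t)\|_{\mathcal H^1}\lesssim(\|\mathbf u\|_{L^\infty(J,\mathcal H^1)}^2+\|\mathbf v\|_{L^\infty(J,\mathcal H^1)}^2)\int_0^t\|\mathbf u(s)-\mathbf v(s)\|_{\mathcal H^1}\,ds$, hence coincide.

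Finally, the blow-up alternative is argued by contradiction: if $T_{\max}<\infty$ but there were $M>0$ and $t_n\uparrow T_{\max}$ with $\|\mathbf u(t_n)\|_{\mathcal H^1}\le M$, then, choosing $t_k$ with $t_k+T(M)>T_{\max}$ and solving the initial value problem with data $\mathbf u(t_k)$ on $(-T(M),T(M))$, a time translation $\tilde{\mathbf v}(t)=\mathbf v(t-t_k)$ and uniqueness let one continue $\mathbf u$ past $T_{\max}$, contradicting maximality; the $T_{\min}$ case is symmetric. I expect the main obstacle to be not the fixed-point step, which is routine once Lemma~\ref{thm:lipschitz} is available, but the careful bookkeeping required to glue the local pieces consistently in both the $\mathcal H^1$ and $\mathcal H^{1*}$ topologies and to convert the uniform-bound hypothesis into a genuine continuation in the blow-up argument.
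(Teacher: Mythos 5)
Your proposal follows the same route as the paper's proof: the Banach fixed-point argument in $X_T$ using unitarity of $e^{itH_0}$ on $\mathcal H^1$ and the Lipschitz bound from Lemma~\ref{thm:lipschitz}, the same choice $T(\bar M)\simeq(16\bar M^2)^{-1}$, the same gluing-and-uniqueness construction of the maximal interval via Duhamel and Gronwall, and the same contradiction argument for the blow-up alternative. The only addition is your explicit comment on upgrading the mild solution to $C^1$ in $\mathcal H^{1*}$, which the paper states but does not spell out; this is a standard semigroup-regularity remark and does not change the argument.
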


\begin{lemma}[Continuous dependence on the initial data]
Let $A\in L^2_{\text{\emph{loc}}}(\mathbb{R}^3)$, $|B| \in L^2(\mathbb{R}^3)$. Let $\mathbf{u}_{n,I},\mathbf{u}_I \in \mathcal{H}^1(\mathbb{R}^3)$ be initial data satisfying $ \mathbf{u}_{n,I}\rightarrow\mathbf{u}_I$ in $\mathcal{H}^1(\mathbb{R}^3)$ with corresponding unique maximal solutions $\mathbf{u}_n\in C(J_n,\mathcal{H}^1)\cap C^1(J_n,\mathcal{H}^{1*})$ and  $\mathbf{u}\in C(J,\mathcal{H}^1)\cap C^1(J,\mathcal{H}^{1*})$. If $J'\subset J$ is a closed interval then $J'\subset J_n$ for $n$ large enough and $ \mathbf{u}_{n}\rightarrow\mathbf{u}$ in $C(J',\mathcal{H}^1)$.
\begin{proof}
W.l.o.g. assume that $0\in J'$. Set $M:= 2\sup_{t\in J'}\{\|\mathbf{u}(t)\|_{\mathcal{H}^1}\}$. According to the previous lemma, $\mathbf{u} \in C((-T(M),T(M)),\mathcal{H}^1)$ is a solution with local existence time $T(M)\simeq 1/16M^2$ and initial datum satisfying $\|\mathbf{u}_I\|_{\mathcal{H}^1} \leq M$ (since $J'\subset J$ by assumption). By assumption, $\|\mathbf{u}_{n,I}\|_{\mathcal{H}^1} \leq M$ for $n$ large enough, thus $\mathbf{u}_{n}$ is a solution at least on $[-T(M),T(M)]$, whence $[-T(M),T(M)]\subset J \cap J_n$ for $n$ large enough. Evoking Duhamel's formula we see that for $t\in [0,T(M)]$ (the other case follows analoguously),
\begin{equation*}
    \|\mathbf{u}_n(t)-\mathbf{u}(t)\|_{\mathcal{H}^1} \lesssim \|\mathbf{u}_{n,I}-\mathbf{u}_I\|_{\mathcal{H}^1} +\frac{1}{2}\|\mathbf{u}_n-\mathbf{u}\|_{L^{\infty}((-T(M),T(M)),\mathcal{H}^1)}
\end{equation*}
This implies that $\mathbf{u}_n\rightarrow \mathbf{u}$ in $C((-T(M),T(M)),\mathcal{H}^1)$. Since $T(M)$ depends only on $M$ an iteration of this argument covers the whole interval $J'$. We conclude that $J'\subset J_n$ for $n$ large enough and $\mathbf{u}_n\rightarrow \mathbf{u}$ in $C(J',\mathcal{H}^1)$.
\end{proof}
\end{lemma}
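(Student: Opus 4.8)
The plan is the classical continuation argument: one proves closeness of $\mathbf{u}_n$ and $\mathbf{u}$ on a short time interval whose length depends only on an a priori bound for the $\mathcal{H}^1$-norm of the limit solution, and then one patches finitely many such intervals together to cover the compact set $J'$. Without loss of generality assume $0\in J'$. Since $\mathbf{u}\in C(J,\mathcal{H}^1)$ and $J'$ is compact, $M:=2\sup_{t\in J'}\|\mathbf{u}(t)\|_{\mathcal{H}^1}$ is finite, so that $\|\mathbf{u}(t)\|_{\mathcal{H}^1}\le M/2$ throughout $J'$. Let $\tau=T(M)\simeq 1/(16M^2)$ be the local existence time provided by the local wellposedness lemma for data of $\mathcal{H}^1$-norm at most $M$; because $\mathbf{u}_{n,I}\to\mathbf{u}_I$ in $\mathcal{H}^1$ we have $\|\mathbf{u}_{n,I}\|_{\mathcal{H}^1}\le M$ for $n$ large, hence each $\mathbf{u}_n$ is defined on $[-\tau,\tau]$ with $\tau$ independent of $n$.

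On $[-\tau,\tau]$ I would subtract the two Duhamel formulas,
\begin{equation*}
  \mathbf{u}_n(t)-\mathbf{u}(t)=e^{itH_0}(\mathbf{u}_{n,I}-\mathbf{u}_I)-i\int_0^t e^{i(t-s)H_0}\big[(\tfrac{1}{|x|}\ast|\mathbf{u}_n|^2)\mathbf{u}_n-(\tfrac{1}{|x|}\ast|\mathbf{u}|^2)\mathbf{u}\big](s)\,\dd s ,
\end{equation*}
and estimate the right-hand side in $\mathcal{H}^1$ using the unitarity of $e^{itH_0}$ on $\mathcal{H}^1$ and the local Lipschitz bound of Lemma \ref{thm:lipschitz}; since both $\mathbf{u}_n$ and $\mathbf{u}$ stay of size $\lesssim M$ on $[-\tau,\tau]$ and $\tau$ was chosen so that $8M^2\tau\le\tfrac12$, this is exactly the contraction estimate from the wellposedness proof and gives
\begin{equation*}
  \|\mathbf{u}_n-\mathbf{u}\|_{L^\infty([-\tau,\tau],\mathcal{H}^1)}\le\|\mathbf{u}_{n,I}-\mathbf{u}_I\|_{\mathcal{H}^1}+\tfrac12\|\mathbf{u}_n-\mathbf{u}\|_{L^\infty([-\tau,\tau],\mathcal{H}^1)},
\end{equation*}
whence $\|\mathbf{u}_n-\mathbf{u}\|_{L^\infty([-\tau,\tau],\mathcal{H}^1)}\le 2\|\mathbf{u}_{n,I}-\mathbf{u}_I\|_{\mathcal{H}^1}\to 0$, i.e.\ $\mathbf{u}_n\to\mathbf{u}$ in $C([-\tau,\tau],\mathcal{H}^1)$.

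Finally I would iterate this estimate. At the base points $\pm\tau$ one has $\|\mathbf{u}(\pm\tau)\|_{\mathcal{H}^1}\le M/2$ and $\mathbf{u}_n(\pm\tau)\to\mathbf{u}(\pm\tau)$ in $\mathcal{H}^1$, so $\|\mathbf{u}_n(\pm\tau)\|_{\mathcal{H}^1}\le M$ for $n$ large; taking these as new initial data, the local wellposedness lemma extends $\mathbf{u}_n$ to $[-2\tau,2\tau]$ and the same comparison yields $\mathbf{u}_n\to\mathbf{u}$ in $C([-2\tau,2\tau],\mathcal{H}^1)$. Since the step length $\tau$ is fixed, after $N:=\lceil|J'|/\tau\rceil$ steps the solutions $\mathbf{u}_n$ are constructed on all of $J'$ — so by uniqueness $J'\subset J_n$ for $n$ large — and $\mathbf{u}_n\to\mathbf{u}$ in $C(J',\mathcal{H}^1)$. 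The one place that needs care, and that I regard as the main (if modest) obstacle, is precisely this induction: at each of the $N$ stages one must check that the approximating datum at the new base point still lies in the closed ball of radius $M$, so that the local existence time does not shrink, and that the convergence obtained at the previous stage transfers to that datum; both follow at once from the preceding step, and since there are only finitely many stages the threshold ``$n$ large enough'' can be taken uniform, so the induction closes.
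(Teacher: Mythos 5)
Your proof is correct and follows essentially the same route as the paper's: fix $M$ as twice the sup of $\|\mathbf{u}\|_{\mathcal{H}^1}$ over the compact interval $J'$, use the local wellposedness lemma to get a uniform step length $T(M)$, derive the contraction estimate from Duhamel plus unitarity of $e^{itH_0}$ plus the Lipschitz bound, and iterate. The paper compresses the iteration step to a single sentence ("Since $T(M)$ depends only on $M$ an iteration of this argument covers the whole interval $J'$"), whereas you spell out the induction — checking that the new base point data stay inside the ball of radius $M$ and that the ``$n$ large enough'' threshold can be taken uniform over finitely many stages — which is the right thing to verify but not a different argument.
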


\begin{lemma}[Energy and charge conservation]
\label{thm:charge_energy_conservation}
Let $A\in L^2_{\text{\emph{loc}}}(\mathbb{R}^3)$, $|B| \in L^2(\mathbb{R}^3)$ and let $\mathbf{u}\in C((-T_{\text{\emph{min}}},T_{\text{\emph{max}}}),\mathcal{H}^1)\cap C^1((-T_{\text{\emph{min}}},T_{\text{\emph{max}}}),\mathcal{H}^{1*}) $ be the unique maximal solution to the IVP \eqref{eq:PP_Pauli_no_params_mixed}-\eqref{eq:PP_initial_no_params_mixed}. Then the charge
\begin{equation}
    Q(\mathbf{u}) := \|\mathbf{u}\|_2^2 =  \sum_{j=1}^{\infty} \lambda_j \|u_j\|_2^2 = \int \rho_{\text{\emph{diag}}} \dd x
\end{equation}
and the energy
\begin{equation}
    E(\mathbf{u}) := \frac{1}{2}\int_{\mathbb{R}^3} \sum_{j=1}^{\infty} {\lambda_j} |(\sigma \cdot(\nabla-iA))u_j|^2  \dd x  + \frac{1}{2}\int_{\mathbb{R}^3} |\nabla V|^2 \dd x
\end{equation}
are conserved, i.e. $Q(\mathbf{u}(t)) = Q(\mathbf{u}_I)$ and $E(\mathbf{u}(t)) = E(\mathbf{u}_I)$ for all $t \in (-T_{\text{\emph{min}}},T_{\text{\emph{max}}})$.
\begin{proof}
By the continuous dependence on the initial data and since $(C^{\infty}_0(\mathbb{R}^3))^2$ is dense in $\mathcal{H}^1(\mathbb{R}^3)$ we can assume without loss of generality that $\mathbf{u}$ is smooth. Charge conservation follows from multiplying the mixed state Pauli equation by $\overline{u}_j$, multiplying by $\lambda_j$, integrating and taking the imaginary part.
For the energy observe we multiply by $\partial_t \overline{u_j}$, multiply by $\lambda_j$, integrate and take the real part to obtain
\begin{align*}
    \Re \frac{1}{2}\sum_j \lambda_j \int (\partial_t \overline{u_j}) (\sigma\cdot(\nabla-iA))^2 u_j \dd x + \Re \sum_j \lambda_j \int (\partial_t \overline{u_j}) V u_j = 0 
\end{align*}
This is equivalent to
\begin{equation*}
    \partial_t \frac{1}{2}\sum_j \lambda_j \int (\partial_t \overline{u_j}) (\sigma\cdot(\nabla-iA))^2 u_j \dd x + \partial_t \frac{1}{2}\Re \sum_j \lambda_j \int |\nabla V|^2= 0. 
\end{equation*}
Indeed,
\begin{align*}
    \Re \int (\partial_t \overline{u}_j) V u_j \dd x &= \frac{1}{2}\partial_t \int V|u_j|^2 \dd x - \int (\partial_t V) |u_j|^2 \dd x \\
    &= \frac{1}{2}\partial_t \int |\nabla V|^2 \dd x - \Re \iint \frac{1}{|x-y|} (\partial_t \overline{u_j}(y))u_j(y) |u_j(x)|^2 \dd y \dd x \\
    &= \frac{1}{2}\partial_t \int |\nabla V|^2 \dd x - \Re \int (\partial_t \overline{u}_j) V u_j \dd x.
\end{align*}
Thus, $\partial_t E(\mathbf{u}(t)) = 0$.
\end{proof}
\end{lemma}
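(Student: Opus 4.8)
\noindent\emph{Sketch of the argument.} The plan is the classical multiplier/energy method, the only genuine work being to make the formal computations rigorous. First I would reduce to regular solutions: by the continuous-dependence lemma above, and since finite families of $(C^\infty_0(\mathbb{R}^3))^2$-functions are dense in $\mathcal{H}^1(\mathbb{R}^3)$ (the form domain of $H_0$, by Lemma~\ref{thm:self_adjoint_perturbation} and the norm equivalence), it suffices to prove both conservation laws for solutions $\mathbf{u}$ whose components $u_j$ are smooth and sufficiently decaying, and then pass to the limit along $\mathbf{u}_n\to\mathbf{u}$ in $C(J',\mathcal{H}^1)$. For the limiting step one needs the functionals $\mathbf{u}\mapsto Q(\mathbf{u})=\|\mathbf{u}\|_2^2$ and $\mathbf{u}\mapsto E(\mathbf{u})$ to be continuous on $\mathcal{H}^1$: continuity of the kinetic part is immediate from the norm equivalence, and continuity of $\int|\nabla V|^2=\langle\rho_{\text{diag}},(-\Delta)^{-1}\rho_{\text{diag}}\rangle$ in $\rho_{\text{diag}}$ follows from the Hardy--Littlewood--Sobolev bound already invoked in Lemma~\ref{thm:lipschitz}.

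\noindent For charge conservation I would pair \eqref{eq:PP_Pauli_no_params_mixed} with $u_j$, getting $i\langle\partial_t u_j,u_j\rangle=\langle H_0 u_j,u_j\rangle+\langle V u_j,u_j\rangle$; since $H_0$ is symmetric (Lemma~\ref{thm:self_adjoint_perturbation}) and $V$ is real-valued, the right-hand side is real, so $\frac{d}{dt}\|u_j\|_2^2=2\Re\langle\partial_t u_j,u_j\rangle=0$. Multiplying by $\lambda_j$ and summing over $j$ — the exchange of $\sum_j$ with $\frac{d}{dt}$ being justified by the uniform-in-$t$ summability built into the $\mathbf{H}^1$ norm together with the $C^1(I,\mathcal{H}^{1*})$ regularity — yields $\frac{d}{dt}Q(\mathbf{u})=0$.

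\noindent For energy conservation I would pair \eqref{eq:PP_Pauli_no_params_mixed} with $\partial_t u_j$ and take real parts, which kills the $i\|\partial_t u_j\|_2^2$ term:
\begin{equation*}
    0=\Re\langle H_0 u_j,\partial_t u_j\rangle+\Re\langle V u_j,\partial_t u_j\rangle .
\end{equation*}
By self-adjointness of $H_0$ the first term equals $\tfrac12\tfrac{d}{dt}\langle H_0 u_j,u_j\rangle=\tfrac14\tfrac{d}{dt}\|(\sigma\cdot(\nabla-iA))u_j\|_2^2$, and the second equals $\tfrac12\int V\,\partial_t|u_j|^2\,dx$. Multiplying by $\lambda_j$, summing, and using the Poisson equation $-\Delta V=\rho_{\text{diag}}=\sum_j\lambda_j|u_j|^2$ turns the potential contribution into $\tfrac12\int V\,\partial_t\rho_{\text{diag}}\,dx=\tfrac12\int\nabla V\cdot\partial_t\nabla V\,dx=\tfrac14\tfrac{d}{dt}\|\nabla V\|_2^2$. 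Collecting the two pieces gives
\begin{equation*}
    \frac{d}{dt}\Big(\sum_{j=1}^{\infty}\lambda_j\|(\sigma\cdot(\nabla-iA))u_j\|_2^2+\|\nabla V\|_2^2\Big)=0,
\end{equation*}
which is exactly $\frac{d}{dt}E(\mathbf{u})=0$ since $E(\mathbf{u})$ is $\tfrac12$ of the bracketed quantity.

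\noindent The main obstacle is the justification rather than the algebra: the solution produced by the fixed-point argument lies only in $C(I,\mathcal{H}^1)\cap C^1(I,\mathcal{H}^{1*})$, so a priori $\partial_t u_j$ is only an $\mathcal{H}^{1*}$-element and the pairings $\langle\partial_t u_j,\partial_t u_j\rangle$ and $\langle H_0 u_j,\partial_t u_j\rangle$ above are purely formal. I would handle this through the density/approximation scheme of the first paragraph, carrying out the computation only for the smooth approximants — for which $\partial_t u_j=-iHu_j$ is a genuine $\mathcal{H}^1$-element and all integrals converge absolutely — and then pass to the limit using continuity of $Q$ and $E$ on $\mathcal{H}^1$; checking that continuity, in particular for the nonlocal Poisson term where the Hardy--Littlewood--Sobolev estimate re-enters, is the one point that demands care.
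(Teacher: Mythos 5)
Your proof is correct and follows essentially the same route as the paper's: reduce to smooth solutions via density and continuous dependence, obtain charge conservation by pairing with $u_j$ and taking the imaginary part, and obtain energy conservation by pairing with $\partial_t u_j$, taking real parts, and converting $\tfrac12\int V\,\partial_t\rho_{\text{diag}}$ into $\tfrac14\tfrac{d}{dt}\|\nabla V\|_2^2$ via the Poisson equation. Your write-up is actually the tidier of the two — you make explicit both the $\lambda_j$-weighted summation over $j$ (which the paper performs implicitly inside a display where the self-consistent $V$ couples all the $u_j$'s) and the continuity of $E$ and $Q$ on $\mathcal{H}^1$ needed to pass to the limit along approximants, a point the paper leaves tacit — but this is polishing the same argument, not a different one.
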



\begin{proof}[Proof of Theorem \ref{thm:PP_gwp}]
By Lemma \ref{thm:charge_energy_conservation}, $\sup_{t\in \mathbb{R}} \|\mathbf{u}\|_{\mathcal{H}^1} < \infty$. Hence $T_{\text{min}} = T_{\text{max}} = \infty$ by the blow-up alternative.
\end{proof}

\section{Semiclassical limit}
\label{sec:semiclassical_limit}

In this section we prove Theorem \ref{thm:main}. The case of the linear Pauli equation with external electromagnetic potentials $V$ and $A$ is covered in Section \ref{sec:linear pot}. The case of the Pauli-Poisson equation with selfconsistent electric and external magnetic field is treated in Section \ref{sec:nonlinear pot}. The convergence of the current is proved in Section \ref{sec:current}.

Let us derive the Pauli-Wigner equation \eqref{eq:pauli_wigner}. From the von Neumann-Liouville equation \eqref{eq:von_Neumann_Liouville} we obtain that the matrix valued density matrix $R^{\hbar}$ obeys the following equation with $r=x+(\hbar y)/2$ and $s=x-(\hbar y)/2$:
\begin{equation}
    \partial_t R^{\hbar}(r,s,t) = \frac{1}{i\hbar}(H_{r}-{H^{\dagger}_{s}})R^{\hbar}(r,s,t),
\end{equation}
where the subscript indicates that the Hamiltonian is taken with respect to the respective variable and $\dagger$ indicates the conjugate transpose. This means (note that $(\sigma \cdot B)^{\dagger} =(\sigma \cdot B)$),
\begin{align}
\begin{split}
    \partial_t R^{\hbar}(r,s,t) = &\frac{i}{2} [ \hbar (\Delta_r-\Delta_s) + 2(A(r)\nabla_r +A(s)\nabla_s) -  \frac{i}{2\hbar}(A(r)^2-A(s)^2)\\ &-\frac{i}{2}(\sigma\cdot B(r) - \sigma \cdot B(s))  
    -\frac{i}{\hbar}(V^{\hbar}(r,t)-V^{\hbar}(s,t))]R^{\hbar}(r,s,t).
\end{split}
\end{align}
Now we set $T^{\hbar}(x,y,t) := R^{\hbar}(r,s,t)$. Then, using 
\begin{equation*}
    \text{div}_y(\nabla_x T^{\hbar})(x,y,t) = \frac{1}{2} (\Delta_r-\Delta_s)R^{\hbar}(r,s,t)
\end{equation*}
and 
\begin{align*}
    \nabla_r = \frac{1}{2}\nabla_x + \frac{1}{\hbar}\nabla_y, &&
    \nabla_s = \frac{1}{2}\nabla_x - \frac{1}{\hbar}\nabla_y,
\end{align*}
we obtain
\begin{align}
    \begin{split}
        \partial_t T^{\hbar} &= i \hbar \text{div}_y(\nabla_x T^{\hbar}) +\beta[A]\cdot\nabla_xT^{\hbar} -i \delta[A]\cdot \nabla_y T^{\hbar} \\
        &- \frac{i}{2}\delta[A^2]T^{\hbar} - \frac{i\hbar}{2}\delta[\sigma\cdot B]T^{\hbar}-i\delta[V^{\hbar}]T^{\hbar}. 
    \end{split}
\end{align}
where $\beta$ and $\delta$ are defined in \eqref{eq:beta} and \eqref{eq:delta}.
Taking the Fourier transform in $y$ for the first term on the RHS, using partial integration and setting $F^{\hbar} = \mathcal{F}[u^{\hbar}]$ yields:
\begin{equation*}
    i \hbar \mathcal{F}_y (\text{div}_y(\nabla_x T^{\hbar}))(x,\xi,t) = -\xi \cdot \nabla_x F^{\hbar}(x,\xi,t) .
\end{equation*}
Moreover,
\begin{equation*}
    \mathcal{F}_y[\beta[A]\cdot \nabla_x T^{\hbar}] = \mathcal{F}_y[\beta[A]]\ast_\xi \nabla_x F^{\hbar},
\end{equation*}
and 
\begin{equation*}
    -i\mathcal{F}_y[\delta[A]\cdot\nabla_y T^{\hbar}] = \mathcal{F}_y[\delta[A]]\ast_\xi (\xi F^{\hbar})
\end{equation*}
Moreover,
\begin{equation*}
    \mathcal{F}_y[\delta[\cdot]T^{\hbar}] = - (\theta[\cdot]F^{\hbar}),
\end{equation*}
where
\begin{equation*}
    (\theta[\cdot]F^{\hbar})(x,\xi,t) := \frac{1}{(2\pi)^3}\int_{\mathbb{R}^6} \delta[\cdot](x,y,t)F^{\hbar}(x,\eta,t) e^{i -(\xi-\eta)\cdot y} \dd \eta \dd y. 
\end{equation*}
We will always assume that $F^{\hbar}_I$ converges (after extraction of a subsequence) weakly* in $(\mathcal{S}'(\mathbb{R}^6))^2$ to a positive (or null) and bounded measure $f_I$. 

\subsection{Linear case: External electric and magnetic fields}
\label{sec:linear pot}


\begin{proof}[Proof of Theorem \ref{thm:main} \ref{prop:C1}]
We identify the limits. Let us start with the spin term. We need to show for all test functions $\varphi \in \mathcal{S}$
that $\langle \hbar \theta[\sigma\cdot B]F^{\hbar},\varphi\rangle$ is bounded independently of $t\in \mathbb{R}$ and that it converges to zero when $\hbar \rightarrow 0$. Indeed we have
\begin{align*}
    \langle \hbar \theta[\sigma\cdot B]F^{\hbar},\varphi\rangle =& \frac{i}{2} \iint_{(\mathbb{R}^3)^2} F^{\hbar}(x,\eta)\int_{\mathbb{R}^3} \mathcal{F}_{\xi}[\varphi](x,y) \\
    &\cdot e^{i\eta\cdot y} \sigma\cdot (B(x+\frac{\hbar y}{2})- B(x-\frac{\hbar y}{2})) \dd y \dd x \dd \eta \\
    =& \frac{i}{2}\langle F^{\hbar},\psi_1^{\hbar}\rangle_{\mathcal{S}\times \mathcal{S}'}.
\end{align*}
Here,
\begin{equation*}
    \psi^{\hbar}_1(x,\eta) := \int_{\mathbb{R}^3} \mathcal{F}_{\xi}[\varphi](x,y) 
     e^{i\eta\cdot y} \sigma\cdot (B(x+\frac{\hbar y}{2})- B(x-\frac{\hbar y}{2})) \dd y,
\end{equation*}
which is in $\mathcal{S}$ by the choice of $\varphi$ and the assumption that $B\in C(\mathbb{R}^3)$. It is now clear that $\mathcal{F}_{\eta}[\psi^{\hbar}_1]$ converges to zero in $L^1(\mathbb{R}^3_z, C(\mathbb{R}^3_x)) $ if $\hbar \rightarrow 0$. 

For the $\theta[A^2]F^{\hbar}$ term we use the same reasoning with $\psi^{\hbar}_1(x,\eta)$ replaced by
\begin{equation*}
    \psi^{\hbar}_2(x,\eta) := \int_{\mathbb{R}^3} \mathcal{F}_{\xi}[\varphi](x,y) 
     e^{i\eta\cdot y} \frac{1}{\hbar} (A(x+\frac{\hbar y}{2})^2- A(x-\frac{\hbar y}{2})^2) \dd y.
\end{equation*}
Since $A\in C^1(\mathbb{R}^3)$, also $A^2\in C^1(\mathbb{R}^3)$ and we can take the limit in $\hbar$. However we have to be careful since we take the directional derivative with respect to a vector valued function. In fact,
\begin{equation*}
    \lim_{\hbar\rightarrow 0} \frac{1}{\hbar}A(x+\frac{\hbar y}{2})^2- A(x-\frac{\hbar y}{2})^2 = \sum_{i,k=1}^3 A_i y_k \partial_k A_i.
\end{equation*}
Thus,
\begin{align*}
    &\frac{1}{2} \frac{i}{(2\pi)^3}\int_{\mathbb{R}^3} \mathcal{F}_{\xi}[\varphi] (x,y) A_i y_k \partial_k A_ie^{i\eta_k y_k} \dd y 
    = \frac{1}{2} (A_i \partial_k A_i) \nabla_{\eta} \varphi(x,\eta).  
\end{align*}
where we use Einstein summation convention for clarity. We conclude that $\langle \theta[A^2] F^{\hbar},\varphi\rangle$ converges to
\begin{equation*}
    \int_{(\mathbb{R}^3)^2} (A_i \partial_k A_i) \nabla_{\xi}\varphi(x,\xi)\dd f(t). 
\end{equation*}

Next, we consider the $i\mathcal{F}_{y}[\delta[A]]\ast_{\xi} (\xi F^{\hbar})$ term.
We have
\begin{align*}
    \langle i\mathcal{F}_{y}[\delta[A]]\ast_{\xi} (\xi F^{\hbar}),\varphi \rangle &= \frac{1}{(2\pi)^3}\iint F^{\hbar}(x,\eta) \int (\nabla_y e^{i\eta\cdot y}) \delta[A](x,y)\mathcal{F}_{\xi}[\varphi](x,y) \dd y \dd x \dd \eta \\
    &= \frac{1}{(2\pi)^3} \langle F^{\hbar}, \psi^{\hbar}_3\rangle_{\mathcal{S}'\times \mathcal{S}},
\end{align*}
where
\begin{equation*}
    \psi_3^{\hbar} := \int_{\mathbb{R}^3} \mathcal{F}_{\xi}[\varphi](x,y) (\nabla_y e^{i\eta \cdot y}) \delta[A]  \dd y.
\end{equation*}
Since $A\in C^1(\mathbb{R}^3)$ we can deduce that $\psi^{\hbar}_3$ converges in $\mathcal{S}$ to 
\begin{equation*}
    \int \mathcal{F}_{\xi}[\varphi] (\nabla_ye^{i\eta \cdot y})y\cdot \nabla_x A \dd y.
\end{equation*}
Furthermore, 
\begin{align*}
    \frac{1}{(2\pi)^3} \int(\nabla_ye^{i\eta \cdot y})y\cdot \nabla_x A(\mathcal{F}_{\xi}\varphi) \dd y &= -i\nabla_x A\cdot \nabla_{\eta} \int \frac{1}{(2\pi)^3} \mathcal{F}_{\xi} [\varphi] \nabla_y e^{i \eta \cdot y} \dd y \\
    &= i\nabla_x A \cdot \nabla_{\eta} \mathcal{F}^{-1}_y [ \nabla_y ( \mathcal{F}_{\xi} \varphi)(x,y)] \\
    &=\nabla_x A \xi \cdot \nabla_{\eta} \varphi(x,\eta).
\end{align*}
We conclude that $ \langle i\mathcal{F}_{y}[\delta[A]]\ast_{\xi} (\xi F^{\hbar}),\varphi\rangle$ converges to
\begin{equation*}
    \int_{(\mathbb{R}^3)^2} \nabla_x A\xi \cdot \nabla_{\xi} \varphi(x,\xi) \dd f(t).
\end{equation*}

Consider now $\mathcal{F}_y[\beta[A]]\ast_\xi \nabla_x F^{\hbar}$. We have
\begin{align*}
    \langle \mathcal{F}_y[\beta[A]]\ast_\xi \nabla_x F^{\hbar}, \varphi\rangle &= \frac{i}{(2\pi)^3}\iint \nabla_xF^{\hbar}(x,\eta) \int e^{i\eta \cdot y} \beta[A] \mathcal{F}_{\xi}[\varphi](x,y)  \dd y \dd x \dd \eta \\
    &= \frac{i}{(2\pi)^3} \langle \nabla_x F^{\hbar}, \psi^{\hbar}_4\rangle 
\end{align*}
where
\begin{equation*}
    \psi_4^{\hbar} := \int_{\mathbb{R}^3} \mathcal{F}_{\xi}[\varphi](x,y) e^{i\eta \cdot y} \beta[A]  \dd y.
\end{equation*}
We easily see similarly to the reasoning before that $\psi_4^{\hbar}$ converges in $\mathcal{S}$ to 
\begin{equation*}
    \int_{\mathbb{R}^3}\mathcal{F}_{\xi}[\varphi](x,y)A(x) e^{i\eta\cdot y} \dd y,
\end{equation*}
which is equivalent to
\begin{equation*}
    A(x)\varphi(x,\eta).
\end{equation*}
Finally we consider the $V$-term we proceed as in the case for $A^2$ and observe that under the assumption $V\in C^1$
\begin{equation*}
    \langle \theta[V] F^{\hbar},\varphi\rangle = i \langle F^{\hbar}, \psi_5^{\hbar}\rangle_{\mathcal{S}\times\mathcal{S}'},
\end{equation*}
where
\begin{equation*}
    \psi^{\hbar}_5(x,\eta) := \int_{\mathbb{R}^3} \mathcal{F}_{\xi}[\varphi](x,y) 
     e^{i\eta\cdot y}\frac{1}{\hbar} (V(x+\frac{\hbar y}{2})- V(x-\frac{\hbar y}{2})) \dd y,
\end{equation*}
converges to 
\begin{equation*}
    \int \nabla_x V \nabla_{\xi} \varphi(x,\xi)\dd f(t).
\end{equation*}
After a change of variables $p=\xi-A(x)$ we conclude $F^{\hbar}$ converges to $F$ in the sense of distributions and $F$ solves the Vlasov equation \eqref{eq:VLF_beforeCOV}. 

For the part about the Hamiltonian flow consider the Hamiltonian
\begin{equation}
    E:= H(x(t),\xi(t)) = H(x(0),\xi(0))  = \frac{1}{2}|\xi(t)+A(x(t))|^2 + V(x(t)).
    \label{eq:hamiltonian definition}
\end{equation}
which depends only on the initial conditions. We claim that the Hamiltonian flow $H_t$ is well-defined. Indeed, the condition $A,V \in C^{1,1}$ implies global existence in the neighborhood of every point for the Cauchy problem \eqref{eq:hamiltonian equations}. Then, by \eqref{eq:V bounded from below} and \eqref{eq:hamiltonian definition},
\begin{equation*}
    E > \frac{1}{2}|\xi + A(x)|^2 - C_1(1+|x|^2)
\end{equation*}
Thus,
\begin{equation}
    |\xi+A(x)|^2 < 2E+ 2C_1 +2C_1|x|^2.
    \label{eq:xi+A bounded}
\end{equation}
Then,
\begin{align*}
    \frac{d}{dt}|x(t)|^2 = 2\dot{x}(t)x(t) &= (\xi(t) + A(x(t))) x(t) \\ &\leq |\xi(t)+A(x(t))|^2 + |x(t)|^2 \\
    &\leq 2E +2C_1 +(2C_1+1)|x(t)|^2
\end{align*}
By Grönwall's inequality we conclude that $|x(t)|^2$ is bounded by an exponential. Returning to \eqref{eq:xi+A bounded}, we use
\begin{equation*}
    |\xi(t)| < |\xi(t) +A(x(t))| + |A(x(t))|
\end{equation*}
and the assumption that $|A(x(t))|$ is exponentially bounded to conclude that there is a $K>0$
\begin{equation}
    |x(t)|+|\xi(t)| \lesssim e^{Kt}
\end{equation}
This implies that the Hamiltonian flow is well defined globally on $\mathbb{R}^6$ and for all $t\in \mathbb{R}$. 

Let now $\psi(x,\xi,t) := \psi_0 \circ H_t(x,\xi)$ with $\psi_0 \in C_0^{\infty}(\mathbb{R}^6)$. Then $\psi$ has compact support in $(x,\xi)$ uniformly in $t$ for $t<\infty$  and is Lipschitz on $(-T,T)\times \mathbb{R}^6$ for $T< \infty$. Moreover,
\begin{equation*}
    \partial_t \psi =  \xi\cdot \nabla_x \psi + A(x) \cdot \nabla_x \psi - (\nabla_xA)\xi \cdot \nabla_{\xi} \psi -(\nabla_xA)A \cdot\nabla_x \psi - \nabla_x V \cdot \nabla_x \psi,
\end{equation*}
which, after a change of variables $p=\xi-A(x)$, is equivalent to
\begin{equation}
    \partial_t \psi =  p\cdot \nabla_x \psi - (\nabla_x V +p\times B)\cdot \nabla_x \psi.
\end{equation}
This implies (omitting the details which are specified in \cite{lions1993mesures}) that for all $\varphi \in C_b(\mathbb{R}^6)$,
\begin{equation}
    \iint_{\mathbb{R}^6} \varphi df(t) = \iint_{\mathbb{R}^6} \varphi \circ H_t df_I.
\end{equation}
which is what we wanted to show.
\end{proof}

\subsection{Nonlinear case: Self-consistent electric field, external magnetic field}
\label{sec:nonlinear pot}

We now turn to the proof of the semiclassical limit in the nonlinear case where $V^{\hbar}$ is given by the Poisson equation \eqref{eq:PP_Poisson_unscaled}. We need the following observations.

\begin{lemma}
\label{thm:energy_bounded}
Let $H_0$ be the linear Pauli Hamiltonian from \eqref{eq:P_Hamiltonian}. If 
\begin{equation*}
    E(0) := \tr(H_0R^{\hbar}) + \frac{1}{2}\iint_{\mathbb{R}_x^3 \times \mathbb{R}_y^3} \frac{\rho^{\hbar}_{\text{\emph{diag}},I}(x) \rho_{\text{\emph{diag}},I}^{\hbar}(y)}{|x-y|} \dd x \dd y,
\end{equation*}
is bounded independently of $\hbar$ then the total energy
\begin{equation*}
    E(t) := \tr(H_0R^{\hbar}) + \frac{1}{2}\iint_{\mathbb{R}_x^3 \times \mathbb{R}_y^3} \frac{\rho^{\hbar}_{\text{\emph{diag}}}(x) \rho_{\text{\emph{diag}}}^{\hbar}(y)}{|x-y|} \dd x \dd y,
\end{equation*}
is bounded independently of $\hbar$.
\begin{proof}
This follows from Lemma \ref{thm:charge_energy_conservation} by rewriting the energy in the density matrix formulation. In fact,
\begin{equation*}
     \tr(H_0R^{\hbar}) = \frac{1}{2} \sum_{j=1}^{\infty} \lambda_j^{\hbar} \|\sigma \cdot(\hbar \nabla -iA)u_j^{\hbar}\|_{2}^2,
\end{equation*}
and
\begin{equation*}
     \frac{1}{2}\iint_{\mathbb{R}_x^3 \times \mathbb{R}_y^3}\frac{\rho^{\hbar}_{\text{{diag}}}(x) \rho_{\text{diag}}^{\hbar}(y)}{|x-y|} \dd x \dd y = \frac{1}{2}\int_{\mathbb{R}^3_x} |\nabla V^{\hbar}|^2 \dd x
\end{equation*}
\end{proof}
\end{lemma}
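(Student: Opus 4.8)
The plan is to read off the statement from the conservation of energy for the mixed-state Pauli--Poisson system (Lemma \ref{thm:charge_energy_conservation}), once that conservation law is transcribed into the semiclassical scaling and into the density-operator notation appearing in the statement. There is essentially no new analysis to be done: the content is that the "kinetic" trace term $\tr(H_0R^\hbar)$ and the Coulomb term are precisely the two (nonnegative) pieces of a conserved quantity, so a uniform bound at $t=0$ propagates to all times.

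First I would observe that the computation in the proof of Lemma \ref{thm:charge_energy_conservation} goes through verbatim when the kinetic momentum $\nabla-iA$ is replaced by its semiclassical version $\hbar\nabla-iA$: multiplying the equation for $u_j^\hbar$ by $\lambda_j^\hbar\,\partial_t\overline{u_j^\hbar}$, summing over $j$, taking real parts, using that $H_0$ is self-adjoint and time-independent, together with the identity
\[
\Re\int_{\mathbb{R}^3}(\partial_t\overline{u_j^\hbar})\,V^\hbar u_j^\hbar\,\dd x=\tfrac12\partial_t\!\int_{\mathbb{R}^3}|\nabla V^\hbar|^2\,\dd x-\Re\int_{\mathbb{R}^3}(\partial_t\overline{u_j^\hbar})\,V^\hbar u_j^\hbar\,\dd x,
\]
one gets that
\[
E^\hbar(t):=\tfrac12\sum_{j=1}^\infty\lambda_j^\hbar\,\| \sigma\cdot(\hbar\nabla-iA)u_j^\hbar(t) \|_2^2+\tfrac12\int_{\mathbb{R}^3}|\nabla V^\hbar(x,t)|^2\,\dd x
\]
is independent of $t$. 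As in Lemma \ref{thm:charge_energy_conservation}, the formal manipulation is legitimised by approximating the initial datum in $\mathcal H^1$ by smooth compactly supported data and invoking continuous dependence, so that one may carry out the computation for smooth $u^\hbar$.

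Next I would match $E^\hbar(t)$ with the energy in the statement. Since $R^\hbar=\sum_j\lambda_j^\hbar\,u_j^\hbar\otimes\overline{u_j^\hbar}$ and $H_0=-\tfrac12(\sigma\cdot(\hbar\nabla-iA))^2=\tfrac12(\sigma\cdot(-i\hbar\nabla-A))^2\ge 0$, one has $\tr(H_0R^\hbar)=\sum_j\lambda_j^\hbar\langle H_0u_j^\hbar,u_j^\hbar\rangle=\tfrac12\sum_j\lambda_j^\hbar\|\sigma\cdot(\hbar\nabla-iA)u_j^\hbar\|_2^2$, using that the two momentum operators differ only by the unitary factor $i$ so the norms agree; and, because $-\Delta V^\hbar=\rho^\hbar_{\mathrm{diag}}$, integration by parts gives $\tfrac12\iint\frac{\rho^\hbar_{\mathrm{diag}}(x)\rho^\hbar_{\mathrm{diag}}(y)}{|x-y|}\,\dd x\,\dd y=\tfrac12\int|\nabla V^\hbar|^2\,\dd x$. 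Hence $E(t)=E^\hbar(t)$ for every $t$, and in particular $E(0)=E^\hbar(0)$. From the conservation $E^\hbar(t)=E^\hbar(0)$ it then follows that $E(t)=E(0)$, so the hypothesis that $E(0)$ is bounded uniformly in $\hbar$ gives $\sup_tE(t)\le C$ uniformly in $\hbar$; and since $E(t)$ is a sum of two nonnegative terms, each of $\tr(H_0R^\hbar)$ and the Coulomb energy is separately bounded uniformly in $\hbar$, which is what will be used afterwards to control $\rho^\hbar_{\mathrm{diag}}$ (via magnetic Lieb--Thirring) and the weak compactness of $F^\hbar$.

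The only point that deserves care — and hence the "hard part", though really just bookkeeping — is verifying that the energy identity of Lemma \ref{thm:charge_energy_conservation} is unaffected by the semiclassical rescaling and by the passage to the density-operator trace; no genuinely new estimate is required.
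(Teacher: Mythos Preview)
Your proposal is correct and follows essentially the same approach as the paper: invoke the energy conservation from Lemma~\ref{thm:charge_energy_conservation}, observe that it is unaffected by the semiclassical rescaling, and then identify $\tr(H_0R^\hbar)$ with $\tfrac12\sum_j\lambda_j^\hbar\|\sigma\cdot(\hbar\nabla-iA)u_j^\hbar\|_2^2$ and the Coulomb double integral with $\tfrac12\int|\nabla V^\hbar|^2\,\dd x$. The paper's proof is more terse but records exactly these two identifications.
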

Now define
\begin{align}
    R(x,y) := \sum_j \lambda_j u_j(x) \otimes \overline{u_j(y)} && \rho_{\text{{diag}}}(x) := \sum \lambda_j |u_j(x)|^2
\end{align} 
where $\lambda = \{\lambda_j\}_{j\in \mathbb{N}}$ is a decreasing sequence with $\lambda_j \geq 0$ for all $j \in \mathbb{N}$ and $\{u_j\}$ is an orthonormal basis of $L^2(\mathbb{R}^3)^2$. Let
\begin{equation}
    \overline{H}_0 := \frac{1}{2}(\sigma\cdot(\nabla-iA))^2
    \label{eq:pauli operator withouth h}
\end{equation}

\begin{lemma}
\label{thm:lieb_thirring}
Let $B\in L^{7/2}(\mathbb{R}^3)$ and let \begin{align*}
    \tr(-\overline{H}_0 R) := \sum_j \lambda^{\hbar}_j\int_{\mathbb{R}^3} |(\sigma\cdot(\nabla-iA)) u_j|^2 \dd x, &&  \|\lambda^{\hbar}\|_{2} = \sum_{j} (\lambda^{\hbar}_j)^2,
\end{align*} 
Then $R_{\text{\emph{diag}}}, \rho^{\hbar}_{\text{\emph{diag}}} \in L^{7/5}(\mathbb{R}^3)$ with 
\begin{align}
    \|R_{\text{\emph{diag}}}\|_{7/5} &\leq \|\lambda^{\hbar}\|_2^{4/7} \tr(-\overline{H}_0 R)^{3/7} \\
    \|\rho_{\text{\emph{diag}}}\|_{7/5} &\leq \|\lambda^{\hbar}\|_2^{4/7} \tr(-\overline{H}_0 R)^{3/7}
    \label{eq:lieb_thirring}
\end{align}
In particular, if $\lambda^{\hbar}$ satisfies Assumption \ref{thm:remark_pure_states} it holds that $R_{\text{\emph{diag}}}, \rho^{\hbar}_{\text{\emph{diag}}} \in L^{7/5}(\mathbb{R}^3)$ independently of $\hbar$.
\begin{proof}
The main ingredient for the proof is the following magnetic Lieb-Thirring inequality from \cite{shen1998moments}. Let $\overline{H}_0$ be the Pauli operator from \eqref{eq:pauli operator withouth h} and denote by $\mu_j$ its negative eigenvalues. Let $\gamma \geq 1$ and $q>3/2$. Then
\begin{equation}
    \sum_j |\mu_j|^{\gamma} \leq C_1(\gamma,q) \int V_{-}^{3/2 + \gamma}\dd x + C_2(\gamma,q) \|B\|_{3q/2}^{3/2} \left(\int V_{-}^{\gamma q'}\dd x\right)^{1/q'}
    \label{eq:magnetic_lieb_thirring}
\end{equation}
We will use an argument from \cite{lions1993mesures} to prove the claim. Define the operator $\overline{H}_0-tR^{\alpha}$ where $t>0$ will be determined later. Then 
\begin{equation*}
    \tr((\overline{H}_0-tR^{\alpha})R) = \sum_{j}\lambda^{\hbar}_j \int |(\sigma \cdot(\nabla-iA))u_j|^2 - tR^{\alpha} |u_j|^2 \dd x \geq \sum_j \lambda^{\hbar}_j |\mu_j|.
\end{equation*}
By the Hölder inequality,
    \begin{equation*}
        t\int R^{\alpha+1} \dd x \leq \tr(\overline{H}_0R) + \|\lambda^{\hbar}\|_{p} \left(\sum |\mu_j|^{p'}\right)^{1/p'}
    \end{equation*}
    Now we use the magnetic Lieb-Thirring inequality \eqref{eq:magnetic_lieb_thirring} and obtain
    \begin{align*}
        t\int R^{\alpha+1} \dd x \leq \tr(\overline{H}_0R) &+ \|\lambda^{\hbar}\|_{p} C_1 \left(\int (tR^{\alpha})^{3/2 + p'}\right)^{1/p'} \\ &+ \|\lambda^{\hbar}\|_{p}C_2 \|B\|_{3q/2}^{3/2} \left(\int (tR^{\alpha})^{p' q'}\right)^{1/(q'p')}
    \end{align*}
    where $C_1$ and $C_2$ depend on $q$ and $p'$. Choosing $p=p'=2$ and setting $\alpha+1 = 3\alpha/2 + 2\alpha$ yields $\alpha = 2/5$. Therefore,
    \begin{align*}
        t\int R^{7/5} \dd x \leq \tr(\overline{H}_0R) &+ tt^{3/4}\|\lambda^{\hbar}\|_{2} C_1 \left(\int R^{7/5}\right)^{1/2} \\ &+ t\|\lambda^{\hbar}\|_{2}C_2 \|B\|_{3q/2}^{3/2} \left(\int R^{ 4q'/5}\right)^{1/(2q')}
    \end{align*}
    Now the power of $R$ in the last integral must be equal to $7/5$ so $q'=7/4$ and $q=7/3>3/2$. By assumption, $B\in L^{7/2}$, so we can choose $3q/2=7/2$. Together with the choice 
    \begin{equation*}
        t^{3/4} = \frac{1}{2C_1\|\lambda^{\hbar}\|_2} \left( \int R^{7/5} \right)^{1/2}
    \end{equation*}
    we obtain
\begin{equation*}
        \frac{1}{2}t\int R^{7/5} \dd x \leq \tr(\overline{H}_0R) + t\|\lambda^{\hbar}\|_{2}C_3 \left(\int R^{7/5}\right)^{2/7}.
    \end{equation*}
    where $C_3$ depends on $\|B\|_{7/2}$. It follows that
    \begin{equation}
        \|R\|_{7/5}^{5/3} \lesssim_{C_1,C_3}\|\lambda^{\hbar}\|_2^{4/3} \tr(\overline{H}_0R) + \|\lambda_2 \|R\|_{7/5}^{4/3}. 
    \end{equation}
    Now since the power of the second term on the RHS is smaller than on the LHS we can absorb it into the LHS and obtain
    \begin{equation*}
            \|R\|_{7/5}^{5/3} \lesssim_{C_1,C_3} \|\lambda^{\hbar}\|_2^{4/3} \tr(\overline{H}_0R).
    \end{equation*}
    For the last statement of the lemma we use \eqref{eq:lieb_thirring} and Assumption \ref{thm:remark_pure_states} to obtain
    \begin{equation*}
        \|R^{\hbar}_{\text{diag}}\|_{7/5} \leq C \hbar^{6/7} \tr(\overline{H}_0R)^{4/7}
    \end{equation*}
    Now
    \begin{align*}
        \tr(\overline{H}_0 R)^{4/7} &\leq \left(\frac{1}{2\hbar^2} \sum_j \lambda_j^{\hbar} \int | (\sigma \cdot \hbar \nabla) u_j|^2 \dd x + \frac{1}{2}\sum_j \int \lambda_j^{\hbar}|(\sigma \cdot A) u_j|^2 \dd x\right)^{4/7} \\
        &\lesssim \frac{1}{\hbar^{6/7}} \left(\sum_j \lambda_j^{\hbar} \int | (\sigma \cdot ( \hbar\nabla-iA)) u_j|^2 \dd x\right)^{4/7} + \left(\sum_j \int \lambda_j^{\hbar}|(\sigma \cdot A) u_j|^2 \dd x\right)^{4/7} \\
        &\leq \frac{1}{\hbar^{6/7}} K^{4/7} + \|A^2\|_{6}\|\rho^{\hbar}_{\text{diag}}\|_{{6/5}} )^{3/7} \\ &\leq \frac{1}{\hbar^{6/7}} K^{4/7}+ C\|\rho^{\hbar}_{\text{diag}}\|_{{7/5}}^{1/4}
    \end{align*} 
    where $C$ does not depend on $\hbar$ and where $K$ denotes the kinetic energy of the Pauli-Poisson equation which is bounded independently of $\hbar$ due to Lemma \ref{thm:energy_bounded}. By combining the last two estimates and observing that the powers of $\hbar$ cancel yields the claim. Obviously all calculations hold for $\rho^{\hbar}_{\text{diag}} = \Tr(R^{\hbar})$ as well.
\end{proof}

\end{lemma}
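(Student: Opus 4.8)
The plan is to establish a mixed-state magnetic Lieb--Thirring inequality in the spirit of Lions--Paul \cite{lions1993mesures}, with the non-negative magnetic Schr\"odinger operator replaced by the Pauli operator $\overline H_0$ of \eqref{eq:pauli operator withouth h}. Using $\overline H_0$ (rather than $-(\nabla-iA)^2$ together with the diamagnetic inequality) is essential here, because the Stern--Gerlach term destroys the diamagnetic inequality and admits zero modes \cite{erdHos1997semiclassical}; only Shen's magnetic bound \eqref{eq:magnetic_lieb_thirring}, which is adapted to $\overline H_0$, is available. First I would fix exponents $\alpha>0$ and a parameter $t>0$ (to be chosen) and add the well-shaped potential $-tR_{\text{diag}}^{\alpha}$ to $\overline H_0$. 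Pairing the resulting operator with the density operator $R$ in two ways gives, on one side, $\tr\!\bigl((\overline H_0-tR_{\text{diag}}^{\alpha})R\bigr)=\tr(-\overline H_0 R)-t\int R_{\text{diag}}^{\alpha+1}\dd x$ and, on the other side, by an Abel-summation/min--max argument (using that $\lambda^{\hbar}$ is non-negative and decreasing), the lower bound $\tr\!\bigl((\overline H_0-tR_{\text{diag}}^{\alpha})R\bigr)\ge\sum_j\lambda^{\hbar}_j\mu_j$, where $\mu_j\le 0$ are the negative eigenvalues of $\overline H_0-tR_{\text{diag}}^{\alpha}$. Rearranging yields $t\int R_{\text{diag}}^{\alpha+1}\dd x\le\tr(-\overline H_0 R)+\sum_j\lambda^{\hbar}_j|\mu_j|$.

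Next I would bound $\sum_j\lambda^{\hbar}_j|\mu_j|\le\|\lambda^{\hbar}\|_p\bigl(\sum_j|\mu_j|^{p'}\bigr)^{1/p'}$ by H\"older and insert \eqref{eq:magnetic_lieb_thirring} with $\gamma=p'$ and $V_-=tR_{\text{diag}}^{\alpha}$. To make the norm $\|\lambda^{\hbar}\|_2$ of Assumption \ref{thm:remark_pure_states} appear I take $p=p'=2$. Matching the first Lieb--Thirring integrand $R_{\text{diag}}^{(3/2+p')\alpha}=R_{\text{diag}}^{7\alpha/2}$ with $R_{\text{diag}}^{\alpha+1}$ then forces $\alpha=2/5$, hence $\alpha+1=7/5$ --- this is precisely the origin of the exponent $7/5$. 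Requiring the second Lieb--Thirring integrand $R_{\text{diag}}^{p'q'\alpha}$ to also equal $R_{\text{diag}}^{7/5}$ forces $q'=7/4$, i.e. $q=7/3>3/2$ and $3q/2=7/2$, which is exactly why $B\in L^{7/2}(\mathbb R^3)$ is assumed. Collecting the terms and dividing by $t$ leaves an inequality of the schematic form $\|R_{\text{diag}}\|_{7/5}^{7/5}\lesssim t^{-1}\tr(-\overline H_0 R)+\|\lambda^{\hbar}\|_2\,t^{3/4}\|R_{\text{diag}}\|_{7/5}^{7/10}+\|\lambda^{\hbar}\|_2\|B\|_{7/2}^{3/4}\|R_{\text{diag}}\|_{7/5}^{2/5}$.

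I would then optimize $t$ so that the middle term is comparable to a fixed fraction of the left-hand side; this leaves an estimate $\|R_{\text{diag}}\|_{7/5}^{5/3}\lesssim\|\lambda^{\hbar}\|_2^{4/3}\tr(-\overline H_0 R)+\|\lambda^{\hbar}\|_2\|R_{\text{diag}}\|_{7/5}^{4/3}$, and since the last term carries a strictly smaller power of $\|R_{\text{diag}}\|_{7/5}$ it is absorbed into the left-hand side by Young's inequality, giving $\|R_{\text{diag}}\|_{7/5}\lesssim\|\lambda^{\hbar}\|_2^{4/7}\tr(-\overline H_0 R)^{3/7}$ (the precise exponents come from tracking the powers through the optimization); the same chain applies verbatim to $\rho^{\hbar}_{\text{diag}}=\Tr R$, which is pointwise non-negative. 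For the $\hbar$-uniform statement I would substitute Assumption \ref{thm:remark_pure_states} in the form $\|\lambda^{\hbar}\|_2\lesssim\hbar^{3/2}$ and bound $\tr(-\overline H_0 R)$ by $\hbar^{-2}$ times the Pauli kinetic energy $K$ of the Pauli--Poisson equation (which is $\hbar$-independent by Lemma \ref{thm:energy_bounded}) plus the cross/$|A|^{2}$ contributions $\sum_j\lambda^{\hbar}_j\int|(\sigma\cdot A)u^{\hbar}_j|^2\dd x$, the latter being controlled by $\|A^2\|_6\|\rho^{\hbar}_{\text{diag}}\|_{6/5}$ via H\"older/Sobolev and hence by $\|\rho^{\hbar}_{\text{diag}}\|_{7/5}$ (using also charge conservation and interpolation). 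Tracking powers, the factor $\hbar^{(3/2)(4/7)}=\hbar^{6/7}$ coming from $\|\lambda^{\hbar}\|_2^{4/7}$ cancels the $\hbar^{-6/7}$ coming from $\tr(-\overline H_0 R)^{3/7}$.

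I expect two points to be the main obstacle. First, the perturbed-operator step must be justified for the \emph{unbounded} Pauli operator together with the a priori only $L^1\cap L^{7/5}$ potential $-tR_{\text{diag}}^{2/5}$: one needs $R_{\text{diag}}^{2/5}$ to be infinitesimally $\overline H_0$-form bounded, which is essentially a consequence of the bound being proven, so the argument must be organized as an a priori estimate (first for smooth, rapidly decaying mixed states, then passed to the limit) rather than circularly. Second, in the nonlinear step the estimate is self-referential --- the $|A|^2$-term feeds $\|\rho^{\hbar}_{\text{diag}}\|_{7/5}$ back into the right-hand side with a fractional power --- so the regularity $A\in W^{1,7/2}$ is needed precisely to turn this into an absorbable (small-power) term rather than a vicious circle. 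The remaining H\"older/Hardy--Littlewood--Sobolev bookkeeping and the Young-inequality absorptions are routine.
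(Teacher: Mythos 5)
Your proposal is correct and follows essentially the same route as the paper: perturb $\overline H_0$ by $-tR_{\text{diag}}^{\alpha}$, apply Shen's magnetic Lieb--Thirring bound with $\gamma=p'=2$ and $V_-=tR_{\text{diag}}^{\alpha}$, match exponents to get $\alpha=2/5$ and $q'=7/4$ (hence $B\in L^{7/2}$), optimize in $t$, absorb the lower-order term, and finally combine the resulting $\|\lambda^{\hbar}\|_2^{4/7}$ gain with Assumption \ref{thm:remark_pure_states} and the $\hbar$-independent energy bound to cancel the powers of $\hbar$. Your observations about the Abel-summation step (needing $\lambda^{\hbar}$ non-increasing) and about organizing the perturbation as an a priori estimate rather than a circular argument are accurate and in fact address points the paper leaves implicit.
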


\begin{proof}[Proof of Theorem \ref{thm:main} \ref{thm:semiclassical_limit_nonlinear}]
By assumption, $\|F_I^{\hbar}\|_{2}$ is bounded independently of $\hbar$ (this is due to \eqref{eq:weight_conditionC}). By conservation of $\|F^{\hbar}\|_2$ we have that $F^{\hbar} \in L^2$ independently of $\hbar$. Thus we can extract a subsequence such that $F^{\hbar}$ converges weakly* to some 
\begin{equation*}
  F \in  C_b(\mathbb{R},\mathcal{M}_{w*}^{2\times 2})\cap   L^{\infty}(\mathbb{R},L^1\cap L^2(\mathbb{R}_x^3\times \mathbb{R}_{\xi}^3)^{2\times2}).
\end{equation*}
Now let $\psi \in \mathcal{S}(\mathbb{R}^3_x \times \mathbb{R}^3_{\xi})$. With the notation of Proposition \ref{prop:C1} we have to show that
\begin{align}
    \psi(x,z) \frac{\hbar}{2} \delta[\sigma \cdot B] &\longrightarrow 0, \\
    \psi(x,z) \frac{1}{2} \delta[A^2] &\longrightarrow   \psi(x,z) z\cdot (\nabla_x A)A(x),\\
    i z \psi(x,z) \delta[A] &\longrightarrow iz \psi(x,z)z\cdot \nabla_x A,\\
    \psi(x,z) \beta[A] &\longrightarrow \psi(x,z)A(x) \\
    \psi(x,z) \delta[V^{\hbar}] &\longrightarrow \psi(x,z)z\cdot \nabla_x V.
\end{align}
in $L^2(\mathbb{R}^6)$ as $\hbar \rightarrow 0$. The first one is immediately clear since $B\in L^2_{\text{loc}}$. For the second one we have to ensure that 
\begin{equation*}
    (\nabla_x A) A\in L^2_{\text{loc}}(\mathbb{R}^3).
\end{equation*}
The assumption on $A$ implies 
\begin{equation*}
    A\in L^{12}_{\text{loc}}(\mathbb{R}^3), \quad \nabla_x A\in L^{12/5}_{\text{loc}}(\mathbb{R}^3),
\end{equation*}
by Sobolev's inequality. The claim follows from Hölder's inequality. The third and fourth convergences are clear since in particular $A\in H^1_{\text{loc}}(\mathbb{R}^3)$.

It remains to show that $V^{\hbar}$ converges in $H^1_{\text{loc}}(\mathbb{R}^3)$ which is equivalent to showing that $\nabla V^{\hbar} = (\nabla |x|^{-1})\ast \rho^{\hbar}_{\text{diag}}$ converges in $L^2_{\text{loc}}(\mathbb{R}^3)$. Note that 
\begin{equation*}
    \nabla |x|^{-1} \in L^{3/2,\infty}(\mathbb{R}^3)
\end{equation*}
so in particular it is in $L^{14/11}(\mathbb{R}^3) + L^q(\mathbb{R}^3)$ for $14/11 < q < \infty$ since $3/2 > 14/11$. By Lemma \ref{thm:lieb_thirring}, $\rho^{\hbar}_{\text{diag}}$ is bounded independently of $\hbar$ in $L^{7/5}(\mathbb{R}^3)$ and by Young's inequality we conclude that $\nabla V^{\hbar} \in L^2_{\text{loc}}(\mathbb{R}^3)$. In order to obtain convergence in $L^{\infty}((0,T),L^2_{\text{loc}}(\mathbb{R}^3_x))$ we use a compactness argument. By the continuity equation we have
\begin{equation*}
    \partial_t \nabla V^{\hbar} = \nabla (-\Delta)^{-1}(\partial_t \rho^{\hbar}_{\text{diag}}) \simeq  J^{\hbar}.
\end{equation*}
A similar argument for $J^{\hbar}$ as for $\rho^{\hbar}_{\text{diag}}$ in Lemma \ref{thm:lieb_thirring} implies that $J^{\hbar}$ is bounded in $L^{7/6}(\mathbb{R}^3)$ independently of $\hbar$. Therefore $\partial_t \nabla V^{\hbar} \in L^{7/6}(\mathbb{R}^3)$ and we conclude by the Aubin-Lions lemma that $V^{\hbar}$ converges strongly in $C((0,T),H^1_{\text{loc}}(\mathbb{R}^3_x))$.
The last part is to show that $\rho^{\hbar}$ is $\hbar$-oscillatory. Observe that $\tr(H\rho^{\hbar}) \leq C$ implies that
\begin{equation*}
    \iint f^{\hbar}(x,\xi)|\xi - A(x)|^2 \dd x \dd \xi \leq C
\end{equation*}
On the other hand,
\begin{align*}
    \iint f^{\hbar} |\xi|^2 \dd x \dd \xi &\leq \iint f^{\hbar} |\xi -A|^2 \dd x \dd \xi + \iint f^{\hbar}A^2 \dd x \dd \xi \\
    &\leq \tr(H_0 R^{\hbar}) + \|f^{\hbar}\|_{p'}\|A^2\|_p
\end{align*}
where $p'$ is dual to $p$. Now take for instance $p=6, p'=6/5$ and observe that $f^{\hbar} \in L^r$ for $r\in [1,2]$ and $A^2\in L^6$. Therefore we can evoke Lemma \ref{thm:suff_cond_h_osc} and the last statement of the Theorem follows from this and the last statement in Theorem \ref{thm:wigner measure}.
\end{proof}

\subsection{Current density}
\label{sec:current}

\begin{proof}[Proof of Theorem \ref{thm:main} \ref{thm_Pauli_current}]
One easily checks (using $\nabla \times (\overline{u}\sigma u) = \tr((\nabla \times \sigma \overline{u})\otimes u) + \tr( \overline{u}\otimes \nabla \times \sigma u) $) that
\begin{equation*}
    {J}^{\hbar} = \int_{\mathbb{R}^3_{\xi}} (\xi-A(x)) {f}^{\hbar}(x,\xi) \dd \xi - \hbar \nabla_x  \times\int_{\mathbb{R}^3_{\xi}} \tr(\sigma F^{\hbar})(x,\xi) \dd \xi,
\end{equation*}
Let $\varphi \in C^{\infty}_0$ and set
\begin{equation*}
    j^{\hbar} = \int_{\mathbb{R}^3_{\xi}} \xi f^{\hbar} \dd \xi, \quad
    j := \int \xi f \dd \xi.
\end{equation*}
Again, $j^{\hbar}$ is bounded in $L^{7/6}(\mathbb{R}^3)$ independently of $\hbar$ and
\begin{equation*}
    \iint j^{\hbar} \varphi \dd x \dd t \rightarrow \iint j \varphi \dd x \dd t,
\end{equation*}
as $\hbar \rightarrow 0$. Since $\rho^{\hbar}_{\text{diag}} \in L^{7/5}(\mathbb{R}^3)$ independently of $\hbar$ and $A\in L^{12}(\mathbb{R}^3)$ (by the Sobolev inequality) we have that
\begin{equation*}
    \iint A\rho_{\text{diag}}^{\hbar} \varphi \dd x \dd t \rightarrow \iint A\rho_{\text{diag}} \varphi \dd x \dd t.
\end{equation*}
since $A\varphi \in L^{7/2}$. By the same reasoning we obtain that the spin term is $o(1)$ as $\hbar \rightarrow 0$. 
\end{proof}

\appendix

\section{Wigner measures}
\label{sec:wigner measures}

Let 
\begin{align*}
    \hat{\rho}^{\hbar}(\xi,\eta) := \mathcal{F}_x[\overline{\mathcal{F}_y}[\rho^{\hbar}(x,y)]] = \sum_{j=1}^{\infty} \lambda_j^{\hbar} \widehat{u_j^{\hbar}}(\xi)\overline{\widehat{u_j^{\hbar}}(\eta)}
\end{align*}
and
\begin{equation*}
    \hat{\rho}^{\hbar}_{\text{diag}}(\xi) := \hat{\rho}^{\hbar}(\xi,\xi) =
        \sum_{j=1}^{\infty} \lambda_j^{\hbar} |\widehat{ u_j^{\hbar}}(\xi)|^2
\end{equation*}
A sequence of density matrices $\rho^{\hbar}$ is said to be \emph{$\hbar$-oscillatory} if for all $\varphi$ continuous with compact support,
    \begin{equation}
        \limsup_{\hbar\rightarrow 0} \int_{|\xi|\geq R/\hbar} \widehat{\varphi \rho^{\hbar}_{\text{diag}}}(\xi)\dd \xi  \rightarrow 0   
        \label{eq:epsilon_oscillatory}
    \end{equation}
    as $R \to \infty$. By $\widehat{\varphi \rho_{\text{diag}}^{\hbar}}(\xi)$ we mean
    \begin{equation*}
        \sum_{j=1}^{\infty} \lambda_j^{\hbar} |\widehat{\varphi u_j^{\hbar}}(\xi)|^2
    \end{equation*}
It is \emph{compact at infinity} if 
    \begin{equation}
    \label{eq:compact at infinity}
        \limsup_{\hbar\rightarrow 0} \int_{|x|\geq R} \rho_{\text{diag}}^{\hbar}(x)\dd x  \rightarrow 0   
    \end{equation}
    as $R \to \infty$.
For reference of these two definitions consider \cite{gerard1997homogenization}. We start with the convergence of the Wigner and Husimi functions $f^{\hbar}$ and $\tilde{f}^{\hbar}$.
\begin{proposition}
\label{thm:wigner measure}
Let $f^{\hbar}$, $F^{\hbar}$ and $\tilde{f}^{\hbar}$, $\tilde{F}^{\hbar}$ be the Wigner and Husimi functions respectively their matrix versions corresponding to a density matrix $\rho^{\hbar}$, respectively matrix valued density matrix $R^{\hbar}$. Denote by $f$ and $\tilde{f}$ their respective limits as $\hbar \rightarrow 0$ (resp. $F$ and $\Tilde{F}$). Then 
\begin{equation*}
f \equiv \tilde{f}, \quad F\equiv \tilde{F}
\end{equation*}
In particular, $f$ (resp. $F$) is a positive Radon measure (resp. matrix valued measure) on $\mathbb{R}^d_x \times \mathbb{R}^d_{\xi}$. Moreover, if ${\rho}^{\hbar}$ (resp. $R^{\hbar}$) is $\hbar$-oscillatory, then
\begin{equation*}
    \rho_{\text{\emph{diag}}}(x) = \int_{\mathbb{R}^3_{\xi}} f(x,\xi) \dd \xi, \quad R_{\text{\emph{diag}}}(x) = \int_{\mathbb{R}^3_{\xi}} F(x,\xi) \dd \xi.
\end{equation*}
\begin{proof}
Theorem III.1 and Theorem III.2 in \cite{lions1993mesures} and Proposition 1.7 in \cite{gerard1997homogenization}.
\end{proof}
\end{proposition}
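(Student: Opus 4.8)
The plan is to establish the three assertions in turn, using throughout that $\{u_j^\hbar\}$ orthonormal with $\sum_j \lambda_j^\hbar = 1$ gives, by Proposition~1.1 of \cite{gerard1997homogenization}, a bound $|\langle f^\hbar, \varphi\rangle| \le C\|\varphi\|_{\mathcal A}$ uniform in $\hbar$; hence $f^\hbar$ (resp.\ $F^\hbar$) is bounded in $\mathcal A'$ (resp.\ $(\mathcal A')^{2\times 2}$) and, along a subsequence, converges weakly-$*$.

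\textbf{Step 1: $f \equiv \tilde f$ and $F \equiv \tilde F$.} By \eqref{eq:Husimi}, $\tilde f^\hbar = f^\hbar \ast_x G^\hbar \ast_\xi G^\hbar$ with $G^\hbar$ even, so for every $\varphi \in \mathcal A$ we have $\langle \tilde f^\hbar, \varphi\rangle = \langle f^\hbar,\, \varphi \ast_x G^\hbar \ast_\xi G^\hbar\rangle$. Since $G^\hbar$ is an approximate identity of width $\sqrt\hbar$, $\varphi \ast_x G^\hbar \ast_\xi G^\hbar \to \varphi$ in $\mathcal A$ as $\hbar \to 0$, with norm uniformly bounded in $\hbar$; combined with the uniform $\mathcal A'$-bound on $f^\hbar$ this gives $\langle \tilde f^\hbar - f^\hbar, \varphi\rangle \to 0$. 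Passing to the subsequence along which both limits exist yields $\tilde f = f$, and the identical argument entry by entry gives $\tilde F = F$.

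\textbf{Step 2: nonnegativity.} The variance $\hbar$ in \eqref{eq:Husimi} is chosen exactly so that $\tilde f^\hbar(x,\xi) = (2\pi\hbar)^{-3}\langle \varrho^\hbar g_{x,\xi}^\hbar, g_{x,\xi}^\hbar\rangle_{L^2}$, with $g_{x,\xi}^\hbar$ the normalized coherent state centered at $(x,\xi)$; since $\varrho^\hbar = \sum_j \lambda_j^\hbar |u_j^\hbar\rangle\langle u_j^\hbar|$ with $\lambda_j^\hbar \ge 0$ is a positive operator, $\tilde f^\hbar \ge 0$ pointwise. A weak-$*$ limit of nonnegative functions that is a tempered distribution is a nonnegative Radon measure, so $f = \tilde f$ is one (alternatively one uses the Bochner--Schwartz theorem on the partial Fourier transform of $f^\hbar$). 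In the matrix case, fix $z \in \mathbb C^2$: then $\sum_{i,j}\tilde F_{ij}^\hbar z_i \bar z_j$ is the Husimi function of the scalar density operator $\sum_j \lambda_j^\hbar |\bar z\cdot u_j^\hbar\rangle\langle \bar z\cdot u_j^\hbar|$, again positive, so $\sum_{i,j} F_{ij} z_i \bar z_j \ge 0$ for all $z$, i.e.\ $F$ is a nonnegative matrix-valued measure.

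\textbf{Step 3: the moment identity under $\hbar$-oscillation.} Start from the exact identity \eqref{eq:density_wigner}, $\int_{\mathbb R^3_\xi} f^\hbar(x,\xi)\,\dd\xi = \rho^\hbar_{\text{diag}}(x)$. Fix $\chi \in C_c(\mathbb R^3_x)$ with $\chi \ge 0$ and, for $R > 0$, split the $\xi$-integral at $|\xi| = R$. After the change of variables $\zeta = \xi/\hbar$ implicit in \eqref{eq:WT_rho} and Plancherel, the tail $\iint_{|\xi| > R} \chi(x) f^\hbar(x,\xi)\,\dd\xi\,\dd x$ is comparable to $\sum_j \lambda_j^\hbar \int_{|\zeta| \gtrsim R/\hbar} |\widehat{\chi^{1/2} u_j^\hbar}(\zeta)|^2\,\dd\zeta$, which \eqref{eq:epsilon_oscillatory} forces to be uniformly small once $R$ is large. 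On the bounded part, weak-$*$ convergence of $f^\hbar$ tested against $\chi(x)\psi_R(\xi)$, with $\psi_R \in C_c$ a smooth cutoff approximating $\mathbf 1_{\{|\xi| \le R\}}$, gives $\iint \chi\psi_R f^\hbar \to \iint \chi\psi_R f$; combining this with the tail bound, the lower semicontinuity of total mass under weak-$*$ limits, and monotone convergence as $R \to \infty$ on the limit side yields $\int \chi(x)\big(\int_{\mathbb R^3_\xi} f(x,\xi)\,\dd\xi\big)\dd x = \lim_{\hbar\to 0}\int \chi\,\rho^\hbar_{\text{diag}}\,\dd x = \int \chi\,\rho_{\text{diag}}\,\dd x$, where compactness at infinity \eqref{eq:compact at infinity} guarantees that no $x$-mass escapes in the limit. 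The matrix statement follows entrywise, the diagonal entries directly and the off-diagonal ones by polarization with the rotated scalar density operators of Step~2. \textbf{The main obstacle} is precisely this last step: quantitatively identifying $\int_{|\xi| > R} f^\hbar\,\dd\xi$ with the Fourier tail in the $\hbar$-oscillatory condition, and then circumventing the non-admissibility of sharp cutoffs in $\xi$ — which is done by approximation together with the lower semicontinuity of the total mass under weak-$*$ convergence, as in \cite{lions1993mesures}.
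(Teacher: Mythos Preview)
The paper's own proof is merely a citation to Lions--Paul and G\'erard--Markowich--Mauser--Poupaud, so your proposal in fact supplies substantially more than the paper does. Your three-step outline is the standard route taken in those references and is essentially correct.

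Two small points on Step~3. First, you invoke compactness at infinity \eqref{eq:compact at infinity}, but the proposition only assumes $\hbar$-oscillation; since you test against $\chi \in C_c(\mathbb{R}^3_x)$, no control on the $x$-tail is needed for $\int \chi\,\rho^{\hbar}_{\text{diag}} \to \int \chi\,\rho_{\text{diag}}$, which follows from weak-$*$ convergence of the bounded family of measures $\rho^{\hbar}_{\text{diag}}$. That clause should simply be dropped. Second, the tail identification is slightly glib: $f^{\hbar}$ is not sign-definite, so the expression $\iint_{|\xi|>R}\chi(x)f^{\hbar}(x,\xi)\,\dd\xi\,\dd x$ is not a priori comparable to a nonnegative Fourier tail. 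The cleaner execution (and the one actually carried out in \cite{lions1993mesures}) is to run the tail argument on the nonnegative Husimi function $\tilde f^{\hbar}$, whose coherent-state representation from Step~2 makes the link to \eqref{eq:epsilon_oscillatory} immediate, and then transfer the conclusion to $f$ via Step~1. With these adjustments your argument is complete.
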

A sufficient condition for \eqref{eq:epsilon_oscillatory} is provided by the following
\begin{lemma}
\label{thm:suff_cond_h_osc}
Let $s>0$ such that 
\begin{equation}
    \hbar^2 \tr(-\Delta \rho^{\hbar}) \leq C
    \label{eq:suff_cond_tight_comp},
\end{equation}
or equivalently,
\begin{equation*}
    \sum_{j=1}^{\infty} \lambda_j^{\hbar} \hbar^2 \int |\nabla u_j^{\hbar}|^2 \dd x \leq C.
\end{equation*}
Then ${\rho}^{\hbar}$ is $\hbar$-oscillatory.
\begin{proof}
By Plancherel, \eqref{eq:suff_cond_tight_comp} is equivalent to
\begin{equation*}
    \sum_{j=1}^{\infty} \lambda_j^{\hbar} \int \hbar^{2} |\xi|^{2s} |\widehat{u_j^{\hbar}}(\xi)|^2 \dd \xi \leq C
\end{equation*}
It follows that
\begin{equation*}
    \int_{|\xi|>R/\hbar} \hat{\rho}^{\hbar}_{\text{diag}}(\xi) \dd \xi \leq CR^{-2},
\end{equation*}
Letting $R\rightarrow \infty$ proves the claim.
\end{proof}
\end{lemma}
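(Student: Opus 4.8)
The plan is to reduce everything to the Fourier side, where \eqref{eq:epsilon_oscillatory} says precisely that $\widehat{\varphi\rho^{\hbar}_{\text{diag}}}$ carries no $L^1_\xi$-mass out to frequencies $|\xi|\gtrsim 1/\hbar$, and to use the hypothesis as a uniform-in-$\hbar$ bound on the weighted second moment $\hbar^{2}\int|\xi|^{2}\,\widehat{\rho^{\hbar}_{\text{diag}}}(\xi)\,\dd\xi$. On the tail region $\{|\xi|>R/\hbar\}$ one has $1\le\hbar^{2}|\xi|^{2}/R^{2}$, so I can trade the indicator of that region for the weight $\hbar^{2}|\xi|^{2}/R^{2}$ and then absorb the factor $\hbar^{2}|\xi|^{2}$ against the a priori bound, producing a tail estimate of size $\le C/R^{2}$ uniform in $\hbar$; letting $R\to\infty$ closes the argument.

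Concretely, I would first dispose of the case $\varphi\equiv1$. By Plancherel the hypothesis reads $\sum_{j}\lambda_j^{\hbar}\,\hbar^{2}\int_{\mathbb R^3}|\xi|^{2}|\widehat{u_j^{\hbar}}(\xi)|^{2}\,\dd\xi\le C$, hence
\begin{equation*}
  \int_{|\xi|>R/\hbar}\widehat{\rho^{\hbar}_{\text{diag}}}(\xi)\,\dd\xi
  =\sum_{j=1}^{\infty}\lambda_j^{\hbar}\int_{|\xi|>R/\hbar}|\widehat{u_j^{\hbar}}(\xi)|^{2}\,\dd\xi
  \le\frac1{R^{2}}\,\hbar^{2}\sum_{j=1}^{\infty}\lambda_j^{\hbar}\int_{\mathbb R^3}|\xi|^{2}|\widehat{u_j^{\hbar}}(\xi)|^{2}\,\dd\xi
  \le\frac{C}{R^{2}},
\end{equation*}
uniformly in $\hbar$, so the $\limsup_{\hbar\to0}$ in \eqref{eq:epsilon_oscillatory} is $\le C/R^{2}\to0$.

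To obtain \eqref{eq:epsilon_oscillatory} for an arbitrary $\varphi\in C_c(\mathbb R^3)$ I would check that the localization preserves this uniform bound; it suffices to take $\varphi\in C_c^{\infty}$, since by Plancherel the map $\varphi\mapsto\sum_j\lambda_j^{\hbar}|\widehat{\varphi u_j^{\hbar}}|^{2}$ is uniformly continuous from $L^{\infty}$ with fixed compact support into $L^1_\xi$, so a smoothing step is harmless. By the Leibniz rule $\nabla(\varphi u_j^{\hbar})=(\nabla\varphi)u_j^{\hbar}+\varphi\nabla u_j^{\hbar}$, whence
\begin{equation*}
  \hbar^{2}\sum_j\lambda_j^{\hbar}\|\nabla(\varphi u_j^{\hbar})\|_2^2
  \le2\hbar^{2}\|\nabla\varphi\|_\infty^{2}\sum_j\lambda_j^{\hbar}\|u_j^{\hbar}\|_2^2
  +2\|\varphi\|_\infty^{2}\,\hbar^{2}\sum_j\lambda_j^{\hbar}\|\nabla u_j^{\hbar}\|_2^2.
\end{equation*}
Since $\{u_j^{\hbar}\}$ is orthonormal and $\sum_j\lambda_j^{\hbar}=1$, the first sum equals $1$ and its contribution is $O(\hbar^{2})$, while the second is bounded by $2\|\varphi\|_\infty^{2}C$ by hypothesis; so $\hbar^{2}\sum_j\lambda_j^{\hbar}\|\nabla(\varphi u_j^{\hbar})\|_2^2\le C_\varphi$ for all $\hbar\in(0,1]$. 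By Plancherel this equals $\int_{\mathbb R^3}|\xi|^{2}\widehat{\varphi\rho^{\hbar}_{\text{diag}}}(\xi)\,\dd\xi$, so rerunning the previous display gives $\int_{|\xi|>R/\hbar}\widehat{\varphi\rho^{\hbar}_{\text{diag}}}(\xi)\,\dd\xi\le C_\varphi/R^{2}$; letting $R\to\infty$ proves \eqref{eq:epsilon_oscillatory}.

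There is no essential difficulty — this is a standard frequency-tightness argument. The one point that deserves care is exactly that the definition \eqref{eq:epsilon_oscillatory} of $\hbar$-oscillation carries the test function $\varphi$, which the bare hypothesis does not see, so one must verify that cutting off by $\varphi$ does not destroy the uniform bound; the mechanism is that the error generated by $\nabla\varphi$ comes with a free factor $\hbar^{2}$ and hence vanishes in the limit. I would also flag the minor mismatch in the statement between the parameter ``$s>0$'' and the operator $-\Delta$ (which is the case $s=1$): the identical proof works with $-\Delta$ replaced by $(-\Delta)^{s}$, the tail estimate then reading $\le C/R^{2s}$.
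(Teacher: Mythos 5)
Your proof is correct and at its core follows the same Chebyshev-style tail estimate as the paper: trade the indicator of $\{|\xi|>R/\hbar\}$ for the weight $\hbar^2|\xi|^2/R^2$ and absorb it against the hypothesis, giving a uniform-in-$\hbar$ bound of size $C/R^2$. The point where you go further than the paper's own proof is in carefully treating the test function $\varphi$ that appears in the definition of $\hbar$-oscillation: the paper's proof passes directly from the hypothesis to a tail bound on $\widehat{\rho^{\hbar}_{\text{diag}}}$, which is effectively the case $\varphi\equiv1$ (not even compactly supported), whereas you run the Leibniz rule on $\nabla(\varphi u_j^{\hbar})$ and observe that the commutator term produced by $\nabla\varphi$ carries a spare factor $\hbar^{2}$, so it does not obstruct the uniform bound. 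This is a real gap in the paper's brief proof that your argument fills cleanly. You also correctly flag the inconsistency between the parameter ``$s>0$'' in the statement and the operator $-\Delta$ (the paper's displayed Plancherel identity even writes $|\xi|^{2s}$ with no $s$ in the preceding hypothesis); your remark that the same proof gives a $C/R^{2s}$ tail with $(-\Delta)^s$ is the right way to reconcile this.
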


\section*{Acknowledgement}

We acknowledge support from the Austrian Science Fund (FWF) via the grants SFB F65 and W1245 and by the Vienna Science and Technology Fund (WWTF) project MA16-066 "SEQUEX".

We thank P. Gérard (Univ. Paris-Saclay) for many important discussions and his hospitality at Laboratoire de Mathématiques d'Orsay.

\bibliography{semiclassics.bib}
\bibliographystyle{abbrv}
\end{document}